\newcommand{\russian}[2]{#2}
\newcommand{\biblio}{\printbibliography}
	\renewcommand{\todo}[2][]{\tikzexternaldisable\@todo[#1]{#2}\tikzexternalenable}
	\newcommand{\tikzexternaldisable}{}
	\newcommand{\tikzexternalenable}{}
\author[M.~Marciniak]{Mikołaj Marciniak} 
\address{Interdisciplinary Doctoral School “Academia~Copernicana”, Faculty of
    Mathematics and Computer Science, Nicolaus Copernicus University in Toruń,
    ul.~Chopina~12/18, 87-100 Toruń, Poland} \email{marciniak@mat.umk.pl}
\author[{\L}.~Maślanka]{{\L}ukasz Ma\'slanka}
\address{Institute of Mathematics, Polish Academy of Sciences,
    \mbox{ul.~\'Sniadeckich 8}, 00-656 Warszawa, Poland}
\email{lmaslanka@impan.pl}
\author[P.~\'Sniady]{Piotr \'Sniady}
\address{Institute of Mathematics, Polish Academy of Sciences,
    \mbox{ul.~\'Sniadec\-kich 8,} \linebreak 00-656 Warszawa, Poland }
\email{psniady@impan.pl}
\newcommand{\Marciniak}{the~first named author}
\newcommand{\Sniady}{the~last named author}
\theoremstyle{definition}
\newtheorem{definition}{Definition}[section]
\theoremstyle{plain}
\newtheorem{theorem}[definition]{Theorem}
\newtheorem{proposition}[definition]{Proposition}
\newtheorem{corollary}[definition]{Corollary}
\newtheorem{lemma}[definition]{Lemma}
\newtheorem{conjecture}[definition]{Conjecture}
\newtheorem{fact}[definition]{Fact}
\theoremstyle{remark}
\newtheorem{remark}[definition]{Remark}
\newtheorem{question}[definition]{Question}
\newcommand{\N}{\mathbb{N}}
\newcommand{\R}{\mathbb{R}}
\newcommand{\PP}{\mathbb{P}}
\newcommand{\PPcond}[2]{\PP\left(#1  \;\middle\vert\; #2 \right)}
\newcommand{\PPcondCurly}[2]{\PP\left\{ #1  \;\middle\vert\; #2 \right\}}
\newcommand{\inrv}{\in}
\DeclareRobustCommand{\stirling}{\genfrac\{\}{0pt}{}}
\newcommand{\Sym}[1]{\mathfrak{S}_#1}
\newcommand{\tableau}{\mathcal{T}}
\newcommand{\diagrams}{\mathbb{Y}}
\newcommand{\Augmented}{\diagrams^*}
\DeclareMathOperator{\RSK}{RSK}
\DeclareMathOperator{\lazy}{lazy}
\DeclareMathOperator{\projective}{proj}
\DeclareMathOperator{\traj}{traj}
\DeclareMathOperator{\Pos}{Pos}
\DeclareMathOperator{\Plancherel}{Plan}
\begin{document}

\title[Poisson limit of bumping routes]%
{Poisson limit of bumping routes \\ in the~Robinson--Schensted correspondence}

\begin{abstract}
We consider the~Robinson--Schensted--Knuth algorithm applied to a~random input
and investigate the~shape of the~bumping route (in~the~vicinity of the~$y$-axis) 
when a~specified number is inserted into a~large
Plancherel-distributed random tableau. We show that after a~projective change of the
coordinate system the~bumping route converges in distribution to the~Poisson
process.
\end{abstract}

\subjclass[2010]{%
60C05,  	
05E10,	  	
60F05, 		
60K35  	    
}

\keywords{Robinson--Schensted--Knuth algorithm, RSK, Plancherel growth process,
    bumping route, limit shape, Poisson process}

\maketitle

\section{Introduction}
\label{sec:preliminaries}

\subsection{Notations}

The~set of Young diagrams will be denoted by $\diagrams$; the~set of Young
diagrams with $n$ boxes will be denoted by $\diagrams_n$. The~set $\diagrams$
has a~structure of an~oriented graph, called \emph{Young graph}; a~pair
$\mu\nearrow\lambda$ forms an~oriented edge in this graph if the~Young diagram
$\lambda$ can~be created from the~Young diagram $\mu$ by addition of a~single
box.

We will draw Young diagrams and tableaux in the~French convention with the
Cartesian~coordinate system $Oxy$, cf.~\cref{fig:RSK,fig:tableauFrench}. We
index the~rows and the~columns of tableaux by \emph{non-negative} integers from
$\N_0=\{0,1,2,\dots\}$. In particular, if $\Box$ is a~box of a~tableau, we
identify it with the~Cartesian~coordinates of its \emph{lower-left corner}:
$\Box=(x,y)\in \N_0\times \N_0$. For a~tableau $\tableau$ we denote by
$\tableau_{x,y}$ its entry which lies in the~intersection of the~row $y\in\N_0$
and the~column $x\in\N_0$.
The position of the box $s$ in the tableau $\tableau$ 
will be denoted by $\Pos_s(\tableau)\in\N_0\times \N_0$. 

Also the~rows of any Young diagram $\lambda=(\lambda_0,\lambda_1,\dots)$ are
indexed by the~elements of~$\N_0$; in particular the~length of the~bottom row
of $\lambda$ is denoted by~$\lambda_0$.

\subsection{Schensted row insertion}

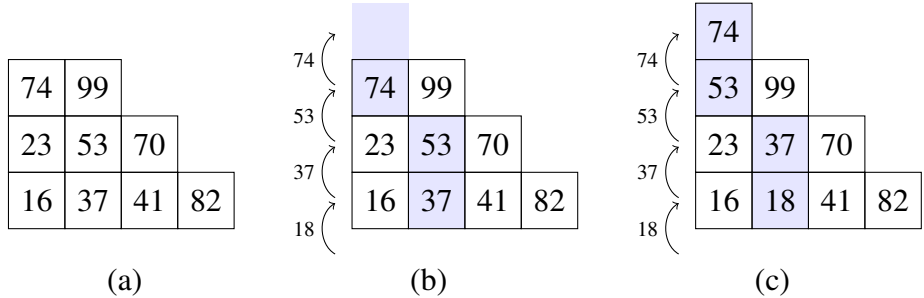
\begin{figure}[t]
    \centering
    \hfill
    \subfloat[]{
        \begin{tikzpicture}[scale=0.75]
            \clip (-0.1,-0.5) rectangle (4.1,4.5);

            \draw (0,0) rectangle +(1,1); 
            \node at (0.5,0.5) {16}; 
            \draw (1,0) rectangle +(1,1); 
            \node at (1.5,0.5) {37}; 
            \draw (2,0) rectangle +(1,1); 
            \node at (2.5,0.5) {41}; 
            \draw (3,0) rectangle +(1,1); 
            \node at (3.5,0.5) {82}; 
            \draw (0,1) rectangle +(1,1); 
            \node at (0.5,1.5) {23}; 
            \draw (1,1) rectangle +(1,1); 
            \node at (1.5,1.5) {53}; 
            \draw (2,1) rectangle +(1,1); 
            \node at (2.5,1.5) {70}; 
            \draw (0,2) rectangle +(1,1); 
            \node at (0.5,2.5) {74}; 
            \draw (1,2) rectangle +(1,1); 
            \node at (1.5,2.5) {99};

        \end{tikzpicture}
        \label{fig:RSKa}
    }
    \hfill
    \subfloat[]{
        \begin{tikzpicture}[scale=0.75]
            \clip (-1.5,-0.5) rectangle (4.1,4.5);
            \fill[blue!10] (1,0) rectangle +(1,1);
            \fill[blue!10] (1,1) rectangle +(1,1);
            \fill[blue!10] (0,2) rectangle +(1,1);
            \fill[blue!10] (0,3) rectangle +(1,1);

            \draw (0,0) rectangle +(1,1); 
            \node at (0.5,0.5) {16}; 
            \draw (1,0) rectangle +(1,1); 
            \node at (1.5,0.5) {37}; 
            \draw (2,0) rectangle +(1,1); 
            \node at (2.5,0.5) {41}; 
            \draw (3,0) rectangle +(1,1); 
            \node at (3.5,0.5) {82}; 
            \draw (0,1) rectangle +(1,1); 
            \node at (0.5,1.5) {23}; 
            \draw (1,1) rectangle +(1,1); 
            \node at (1.5,1.5) {53}; 
            \draw (2,1) rectangle +(1,1); 
            \node at (2.5,1.5) {70}; 
            \draw (0,2) rectangle +(1,1); 
            \node at (0.5,2.5) {74}; 
            \draw (1,2) rectangle +(1,1); 
            \node at (1.5,2.5) {99};

            \draw[->] (-0.3,-0.45) to[bend left=60] (-0.3,0.45);
            \draw[->] (0,1) +(-0.3,-0.45) to[bend left=60] +(-0.3,0.45);
            \draw[->] (0,2) +(-0.3,-0.45) to[bend left=60] +(-0.3,0.45);
            \draw[->] (0,3) +(-0.3,-0.45) to[bend left=60] +(-0.3,0.45);
            
            \tiny
            \node[] at (-0.85,0) {18};
            \node[] at (-0.85,1) {37};
            \node[] at (-0.85,2) {53};
            \node[] at (-0.85,3) {74};
            
        \end{tikzpicture}
        \label{fig:RSKb}
    }
    \hfill
    \subfloat[]{
        \begin{tikzpicture}[scale=0.75]
            \clip (-1.5,-0.5) rectangle (4.1,4.5);
            \fill[blue!10] (1,0) rectangle +(1,1);
            \fill[blue!10] (1,1) rectangle +(1,1);
            \fill[blue!10] (0,2) rectangle +(1,1);
            \fill[blue!10] (0,3) rectangle +(1,1);
            
            \draw (0,0) rectangle +(1,1); 
            \node at (0.5,0.5) {16}; 
            \draw (1,0) rectangle +(1,1); 
            \node at (1.5,0.5) {18}; 
            \draw (2,0) rectangle +(1,1); 
            \node at (2.5,0.5) {41}; 
            \draw (3,0) rectangle +(1,1); 
            \node at (3.5,0.5) {82}; 
            \draw (0,1) rectangle +(1,1); 
            \node at (0.5,1.5) {23}; 
            \draw (1,1) rectangle +(1,1); 
            \node at (1.5,1.5) {37}; 
            \draw (2,1) rectangle +(1,1); 
            \node at (2.5,1.5) {70}; 
            \draw (0,2) rectangle +(1,1); 
            \node at (0.5,2.5) {53}; 
            \draw (1,2) rectangle +(1,1); 
            \node at (1.5,2.5) {99}; 
            \draw (0,3) rectangle +(1,1); 
            \node at (0.5,3.5) {74};

            \draw[->] (-0.3,-0.45) to[bend left=60] (-0.3,0.45);
            \draw[->] (0,1) +(-0.3,-0.45) to[bend left=60] +(-0.3,0.45);
            \draw[->] (0,2) +(-0.3,-0.45) to[bend left=60] +(-0.3,0.45);
            \draw[->] (0,3) +(-0.3,-0.45) to[bend left=60] +(-0.3,0.45);
            
            \tiny
            \node[] at (-0.85,0) {18};
            \node[] at (-0.85,1) {37};
            \node[] at (-0.85,2) {53};
            \node[] at (-0.85,3) {74};
            
        \end{tikzpicture}
        \label{fig:RSKc}
    }
    
    \caption{\protect\subref{fig:RSKa} The original tableau $\tableau$.
        \protect\subref{fig:RSKb} We consider the Schensted row insertion of the number
        $18$ to the~tableau~$\tableau$. The~highlighted boxes form the corresponding
        bumping route. The small numbers on the left (next
        to the arrows) indicate the~inserted/bumped numbers. \protect\subref{fig:RSKc} The
        output $\tableau\leftarrow 18$ of the Schensted insertion. } \label{fig:RSK}
\end{figure}

\newcommand{\letter}{a}

\emph{The Schensted row insertion} is an algorithm which takes as an input a
tableau $\tableau$ and some number $\letter$. The number $\letter$ is inserted
into the first row (that is,~the bottom row, the row with the index~$0$) of
$\tableau$ to the~leftmost box which contains an entry which is strictly bigger
than $\letter$.

In the case when the row contains no entries which
are bigger than~$\letter$, the~number~$\letter$ is inserted into the leftmost
empty box in this row and the algorithm terminates. 

If, however, the number $\letter$ is inserted into a box which was not empty,
the~previous content $\letter'$ of the box is \emph{bumped} into the second row
(that is, the~row with the index $1$). This means that the algorithm is
iterated but this time the number $\letter'$ is inserted into the second row to
the leftmost box which contains a number bigger than $\letter'$. 
We repeat these steps of row insertion and bumping 
until some number is inserted into a previously empty box.
This process is illustrated on \cref{fig:RSKb,fig:RSKc}. The outcome of
the Schensted insertion is defined as the~result of the~aforementioned procedure;
it will be denoted by $\tableau \leftarrow \letter$. 

\medskip

Note that this procedure is well defined also in the~setup when $\tableau$ is
an~\emph{infinite} tableau (see \cref{fig:tableauFrench}  for an~example), even
if the~above procedure does not terminate after a~finite number of steps.

\subsection{Robinson--Schensted--Knuth algorithm} 

For the purposes of this article we consider a simplified version of \emph{the
    Robinson--Schensted--Knuth algorithm}; for this reason we should rather call it
\emph{the Robinson--Schensted algorithm}. Nevertheless, we use the first name
because of its well-known acronym RSK. The RSK algorithm associates to a finite
sequence $w=(w_1,\dots,w_\ell)$ a~pair of tableaux: \emph{the insertion tableau
    $P(w)$} and \emph{the recording tableau~$Q(w)$}.

The insertion tableau 
\begin{equation}
    \label{eq:insertion}
    P(w) = \Big( \big( (\emptyset \leftarrow w_1) \leftarrow w_2 \big) 
    \leftarrow  \cdots \Big) \leftarrow w_\ell
\end{equation}
is defined as the result of the iterative Schensted row insertion applied to the
entries of the sequence $w$, starting from \emph{the empty tableau $\emptyset$}.

The recording tableau $Q(w)$ is defined as the standard Young tableau of the
same shape as $P(w)$ in which each entry is equal to the number of 
the~iteration of \eqref{eq:insertion} in which the given box of~$P(w)$ stopped
being empty; in other words the entries of $Q(w)$ give the order in which the
entries of the~insertion tableau were filled.

The tableaux $P(w)$ and $Q(w)$ have the same shape; we will denote this common
shape by $\RSK(w)$ and call it \emph{the RSK shape associated to $w$}.

The RSK algorithm is of great importance in algebraic combinatorics, especially in
the context of the representation theory \cite{Fulton1997}. 

\subsection{Plancherel measure, Plancherel growth process}
\label{sec:pgp}

Let $\Sym{n}$ denote the symmetric group of order $n$. We will view each
permutation $\pi \in\Sym{n}$ as a sequence $\pi=(\pi_1,\dots,\pi_n)$ which has
no repeated entries, and such that $\pi_1,\dots,\pi_n\in\{1,\dots,n\}$. The
restriction of RSK to the symmetric group is a bijection which to a given
permutation from $\Sym{n}$ associates a pair $(P,Q)$ of standard Young tableaux
of the same shape, consisting of $n$ boxes. A~fruitful area of study concerns
the RSK algorithm applied to a uniformly random permutation from $\Sym{n}$,
especially asymptotically in the limit $n\to\infty$, see \cite{Romik2015a} and
the references therein.

\emph{The Plancherel measure} on $\diagrams_{n}$, denoted $\Plancherel_n$, is
defined as the probability distribution of the random Young diagram $\RSK(w)$
for a uniformly random permutation $w\inrv\Sym{n}$.

\medskip

An \emph{infinite standard Young tableau} 
\cite[Section~2.2]{Kerov1999}
is a filling of the boxes
in a subset of the upper-right quarterplane with positive integers, such that
each row and each column is increasing, and each positive integer is used
exactly once.
There is a natural bijection between the set of infinite standard Young tableaux and the set of infinite paths in the Young graph
\begin{equation} 
    \label{eq:PGP0} 
    \lambda^{(0)} \nearrow \lambda^{(1)} \nearrow \cdots \qquad
    \text{with } \lambda^{(0)}=\emptyset;
\end{equation}
this bijection is given by setting $\lambda^{(n)}$ to be the set of boxes of a
given infinite standard Young tableau which are $\leq n$. 

If
$w=(w_1,w_2,\dots)$ is an \emph{infinite} sequence, the recording tableau $Q(w)$
is defined as the infinite standard Young tableau in which each non-empty entry
is equal to the number of the iteration in the infinite sequence of Schensted row insertions
\[   \big( (\emptyset \leftarrow w_1) \leftarrow w_2 \big) 
\leftarrow  \cdots 
\]
in which the corresponding box stopped being empty, see \cite[Section~1.2.4]{RomikSniady-AnnPro}. Under the aforementioned bijection, the recording tableau $Q(w)$ corresponds to the sequence \eqref{eq:PGP0}
with 
\[ \lambda^{(n)}= \RSK( w_1,\dots, w_n). \]

Let $\xi=(\xi_1,\xi_2,\dots)$ be an infinite sequence of independent, identically
distributed random variables with the uniform distribution $U(0,1)$ on the unit
interval $[0,1]$. \emph{The Plan\-che\-rel measure on the set of infinite standard
    Young tableaux} is defined as the probability distribution of $Q(\xi)$.
Any sequence with the same
probability distribution as \eqref{eq:PGP0} with 
\begin{equation}
    \label{eq:lambda-rsk}
    \lambda^{(n)}= \RSK( \xi_1,\dots, \xi_n) 
\end{equation}
will be called \emph{the Plancherel growth process} \cite{Kerov1999}. It turns
out that the Plancherel growth process is a Markov chain \cite[Sections~2.2 and
2.4]{Kerov1999}. For a~more systematic introduction to this topic we recommend
the monograph \cite[Section~1.19]{Romik2015a}.

\begin{figure}
\centering
\subfloat[]{
\begin{tikzpicture}[scale=1]
\clip (-0.7,-0.7) rectangle (4.8,8.8); 
\fill[blue!20] (3,0) rectangle (4,1); 
\fill[blue!20] (2,1) rectangle (3,2); 
\fill[blue!20] (1,2) rectangle (2,3); 
\fill[blue!20] (1,3) rectangle (2,4); 
\fill[blue!20] (0,4) rectangle (1,5); 
\fill[blue!20] (0,5) rectangle (1,6); 
\fill[blue!20] (0,6) rectangle (1,7); 
\fill[blue!20] (0,7) rectangle (1,8); 
\fill[blue!20] (0,8) rectangle (1,9); 
\draw[black!50] (1,0) --  (1,100);  
\draw[black!50] (2,0) --  (2,100);  
\draw[black!50] (3,0) --  (3,100);  
\draw[black!50] (4,0) --  (4,100);  
\draw[black!50] (5,0) --  (5,100);  
\draw[black!50] (6,0) --  (6,100);  
\draw[black!50] (7,0) --  (7,100);  
\draw[black!50] (8,0) --  (8,100);  
\draw[black!50] (0,1) --  (20,1);  
\draw[black!50] (0,2) --  (20,2);  
\draw[black!50] (0,3) --  (20,3);  
\draw[black!50] (0,4) --  (20,4);  
\draw[black!50] (0,5) --  (20,5);  
\draw[black!50] (0,6) --  (20,6);  
\draw[black!50] (0,7) --  (20,7);  
\draw[black!50] (0,8) --  (20,8);  
\draw[black!50] (0,9) --  (20,9);  
\draw[black!50] (0,10) --  (20,10);  
\draw[black!50] (0,11) --  (20,11);  
\draw[black!50] (0,12) --  (20,12);  
\draw[black!50] (0,13) --  (20,13);  
\draw[black!50] (0,14) --  (20,14);  
\draw[black!50] (0,15) --  (20,15);  
\draw[black!50] (0,16) --  (20,16);  
\draw (0.5,0.5) node { 1 }; 
\draw (1.5,0.5) node { 2 }; 
\draw (2.5,0.5) node { 3 }; 
\draw (3.5,0.5) node { 6 }; 
\draw (4.5,0.5) node { 16 }; 
\draw (5.5,0.5) node { 23 }; 
\draw (6.5,0.5) node { 24 }; 
\draw (7.5,0.5) node { 30 }; 
\draw (8.5,0.5) node { 31 }; 
\draw (9.5,0.5) node { 45 }; 
\draw (0.5,1.5) node { 4 }; 
\draw (1.5,1.5) node { 5 }; 
\draw (2.5,1.5) node { 9 }; 
\draw (3.5,1.5) node { 11 }; 
\draw (4.5,1.5) node { 29 }; 
\draw (5.5,1.5) node { 34 }; 
\draw (6.5,1.5) node { 42 }; 
\draw (7.5,1.5) node { 52 }; 
\draw (8.5,1.5) node { 61 }; 
\draw (9.5,1.5) node { 66 }; 
\draw (0.5,2.5) node { 7 }; 
\draw (1.5,2.5) node { 10 }; 
\draw (2.5,2.5) node { 18 }; 
\draw (3.5,2.5) node { 21 }; 
\draw (4.5,2.5) node { 32 }; 
\draw (5.5,2.5) node { 36 }; 
\draw (6.5,2.5) node { 47 }; 
\draw (7.5,2.5) node { 69 }; 
\draw (8.5,2.5) node { 71 }; 
\draw (9.5,2.5) node { 79 }; 
\draw (0.5,3.5) node { 8 }; 
\draw (1.5,3.5) node { 12 }; 
\draw (2.5,3.5) node { 20 }; 
\draw (3.5,3.5) node { 22 }; 
\draw (4.5,3.5) node { 38 }; 
\draw (5.5,3.5) node { 43 }; 
\draw (6.5,3.5) node { 49 }; 
\draw (7.5,3.5) node { 78 }; 
\draw (8.5,3.5) node { 81 }; 
\draw (9.5,3.5) node { 86 }; 
\draw (0.5,4.5) node { 13 }; 
\draw (1.5,4.5) node { 15 }; 
\draw (2.5,4.5) node { 28 }; 
\draw (3.5,4.5) node { 35 }; 
\draw (4.5,4.5) node { 39 }; 
\draw (5.5,4.5) node { 48 }; 
\draw (6.5,4.5) node { 56 }; 
\draw (7.5,4.5) node { 87 }; 
\draw (8.5,4.5) node { 98 }; 
\draw (9.5,4.5) node { 101 }; 
\draw (0.5,5.5) node { 14 }; 
\draw (1.5,5.5) node { 27 }; 
\draw (2.5,5.5) node { 37 }; 
\draw (3.5,5.5) node { 50 }; 
\draw (4.5,5.5) node { 58 }; 
\draw (5.5,5.5) node { 84 }; 
\draw (6.5,5.5) node { 106 }; 
\draw (7.5,5.5) node { 113 }; 
\draw (8.5,5.5) node { 124 }; 
\draw (9.5,5.5) node { 146 }; 
\draw (0.5,6.5) node { 17 }; 
\draw (1.5,6.5) node { 33 }; 
\draw (2.5,6.5) node { 41 }; 
\draw (3.5,6.5) node { 54 }; 
\draw (4.5,6.5) node { 72 }; 
\draw (5.5,6.5) node { 109 }; 
\draw (6.5,6.5) node { 120 }; 
\draw (7.5,6.5) node { 144 }; 
\draw (8.5,6.5) node { 149 }; 
\draw (9.5,6.5) node { 151 }; 
\draw (0.5,7.5) node { 19 }; 
\draw (1.5,7.5) node { 46 }; 
\draw (2.5,7.5) node { 57 }; 
\draw (3.5,7.5) node { 63 }; 
\draw (4.5,7.5) node { 73 }; 
\draw (5.5,7.5) node { 129 }; 
\draw (6.5,7.5) node { 139 }; 
\draw (7.5,7.5) node { 150 }; 
\draw (8.5,7.5) node { 173 }; 
\draw (9.5,7.5) node { 180 }; 
\draw (0.5,8.5) node { 25 }; 
\draw (1.5,8.5) node { 51 }; 
\draw (2.5,8.5) node { 65 }; 
\draw (3.5,8.5) node { 67 }; 
\draw (4.5,8.5) node { 91 }; 
\draw (5.5,8.5) node { 130 }; 
\draw (6.5,8.5) node { 148 }; 
\draw (7.5,8.5) node { 165 }; 
\draw (8.5,8.5) node { 175 }; 
\draw (9.5,8.5) node { 231 }; 

\foreach \x in {1,...,3}
{ \draw[very thick]  (\x,0 ) +(0,5pt) -- +(0,-5pt) node[anchor=north] {$ \x  $}; }

\foreach \y in {1,...,7}
 { \draw[very thick]  (0, \y ) +(5pt,0) -- +(-5pt,0) node[anchor=east] {$ \y  $}; }

\draw [->,very thick] (0,0) -- (3.8,0) node[anchor=north west] {$x$};
\draw [->,very thick] (0,0) -- (0,7.8) node[anchor=south east] {$y$};
\draw[red,line width=1mm] (3,0) -- (3,1); 
\draw[red,line width=1mm] (2,1) -- (2,2); 
\draw[red,line width=1mm] (1,2) -- (1,3); 
\draw[red,line width=1mm] (1,3) -- (1,4); 
\draw[red,line width=1mm] (0,4) -- (0,5); 
\draw[red,line width=1mm] (0,5) -- (0,6); 
\draw[red,line width=1mm] (0,6) -- (0,7); 
\draw[red,line width=1mm] (0,7) -- (0,8); 
\draw[red,line width=1mm] (0,8) -- (0,9); 
        
\end{tikzpicture}
\label{fig:tableauFrench}
}
\hfill
\subfloat[]{
\begin{tikzpicture}[scale=1]
\clip (-1,-0.7) rectangle (4.8,8.8); 

\begin{scope}[yscale=6/7]
        \fill[blue!20] (3,30) rectangle (4,7.0); 
        \fill[blue!20] (2,7.0) rectangle (3,3.5); 
        \fill[blue!20] (1,3.5) rectangle (2,2.33333); 
        \fill[blue!20] (1,2.33333) rectangle (2,1.75); 
        \fill[blue!20] (0,1.75) rectangle (1,1.4); 
        \fill[blue!20] (0,1.4) rectangle (1,1.16667); 
        \fill[blue!20] (0,1.16667) rectangle (1,1.0); 
        \fill[blue!20] (0,1.0) rectangle (1,0.875); 
        \fill[blue!20] (0,0.875) rectangle (1,0.77778); 
        \fill[blue!20] (0,0.77778) rectangle (1,0.7); 
        \fill[blue!20] (0,0.7) rectangle (1,0.63636); 
        \fill[blue!20] (0,0.63636) rectangle (1,0.58333); 
        \fill[blue!20] (0,0.58333) rectangle (1,0.53846); 
        \fill[blue!20] (0,0.53846) rectangle (1,0.5); 
        \fill[blue!20] (0,0.5) rectangle (1,0.46667); 
        \fill[blue!20] (0,0.46667) rectangle (1,0.4375); 
        \fill[blue!20] (0,0.4375) rectangle (1,0.41176); 
        \fill[blue!20] (0,0.41176) rectangle (1,0.38889); 
        \fill[blue!20] (0,0.38889) rectangle (1,0.36842); 
        \fill[blue!20] (0,0.36842) rectangle (1,0.35); 
        \fill[blue!20] (0,0.35) rectangle (1,0.33333); 
        \fill[blue!20] (0,0.33333) rectangle (1,0.31818); 
        \fill[blue!20] (0,0.31818) rectangle (1,0.30435); 
        \fill[blue!20] (0,0.30435) rectangle (1,0.29167); 
        \fill[blue!20] (0,0.29167) rectangle (1,0.28); 
        \fill[blue!20] (0,0.28) rectangle (1,0.26923); 
        \fill[blue!20] (0,0.26923) rectangle (1,0.25926); 
        \fill[blue!20] (0,0.25926) rectangle (1,0.25); 
        \fill[blue!20] (0,0.25) rectangle (1,0.24138); 
        \fill[blue!20] (0,0.24138) rectangle (1,0.23333); 
        \fill[blue!20] (0,0.23333) rectangle (1,0.22581); 
        \fill[blue!20] (0,0.22581) rectangle (1,0.21875); 
        \fill[blue!20] (0,0.21875) rectangle (1,0.21212); 
        \fill[blue!20] (0,0.21212) rectangle (1,0.20588); 
        \fill[blue!20] (0,0.20588) rectangle (1,0.2); 
        \fill[blue!20] (0,0.2) rectangle (1,0.19444); 
        \fill[blue!20] (0,0.19444) rectangle (1,0.18919); 
        \fill[blue!20] (0,0.18919) rectangle (1,0.18421); 
        \fill[blue!20] (0,0.18421) rectangle (1,0.17949); 
        \fill[blue!20] (0,0.17949) rectangle (1,0.175); 
        \fill[blue!20] (0,0.175) rectangle (1,0.17073); 
        \fill[blue!20] (0,0.17073) rectangle (1,0.16667); 
        \fill[blue!20] (0,0.16667) rectangle (1,0.16279); 
        \fill[blue!20] (0,0.16279) rectangle (1,0.15909); 
        \fill[blue!20] (0,0.15909) rectangle (1,0.15556); 
        \fill[blue!20] (0,0.15556) rectangle (1,0.15217); 
        \fill[blue!20] (0,0.15217) rectangle (1,0.14894); 
        \fill[blue!20] (0,0.14894) rectangle (1,0.14583); 
        \fill[blue!20] (0,0.14583) rectangle (1,0.14286); 
        \fill[blue!20] (0,0.14286) rectangle (1,0.14); 
        \fill[blue!20] (0,0.14) rectangle (1,0.13725); 
        \fill[blue!20] (0,0.13725) rectangle (1,0.13462); 
        \fill[blue!20] (0,0.13462) rectangle (1,0.13208); 
        \fill[blue!20] (0,0.13208) rectangle (1,0.12963); 
        \fill[blue!20] (0,0.12963) rectangle (1,0.12727); 
        \fill[blue!20] (0,0.12727) rectangle (1,0.125); 
        \fill[blue!20] (0,0.125) rectangle (1,0.12281); 
        \fill[blue!20] (0,0) rectangle (1,0.125); 
        \draw[black!50] (1,30) --  (1,0.0007);  
        \draw[black!50] (2,30) --  (2,0.0007);  
        \draw[black!50] (3,30) --  (3,0.0007);  
        \draw[black!50] (4,30) --  (4,0.0007);  
        \draw[black!50] (5,30) --  (5,0.0007);  
        \draw[black!50] (6,30) --  (6,0.0007);  
        \draw[black!50] (7,30) --  (7,0.0007);  
        \draw[black!50] (8,30) --  (8,0.0007);  
        \draw[black!50] (0,7.0) --  (20,7.0);  
        \draw[black!50] (0,3.5) --  (20,3.5);  
        \draw[black!50] (0,2.33333) --  (20,2.33333);  
        \draw[black!50] (0,1.75) --  (20,1.75);  
        \draw[black!50] (0,1.4) --  (20,1.4);  
        \draw[black!50] (0,1.16667) --  (20,1.16667);  
        \draw[black!50] (0,1.0) --  (20,1.0);  
        \draw[black!50] (0,0.875) --  (20,0.875);  
        \draw[black!50] (0,0.77778) --  (20,0.77778);  
        \draw[black!50] (0,0.7) --  (20,0.7);  
        \draw (0.5,7.36842) node { 1 }; 
        \draw (1.5,7.36842) node { 2 }; 
        \draw (2.5,7.36842) node { 3 }; 
        \draw (3.5,7.36842) node { 6 }; 
        \draw (4.5,7.36842) node { 16 }; 
        \draw (5.5,7.36842) node { 23 }; 
        \draw (6.5,7.36842) node { 24 }; 
        \draw (7.5,7.36842) node { 30 }; 
        \draw (8.5,7.36842) node { 31 }; 
        \draw (9.5,7.36842) node { 45 }; 
        \draw (0.5,4.66667) node { 4 }; 
        \draw (1.5,4.66667) node { 5 }; 
        \draw (2.5,4.66667) node { 9 }; 
        \draw (3.5,4.66667) node { 11 }; 
        \draw (4.5,4.66667) node { 29 }; 
        \draw (5.5,4.66667) node { 34 }; 
        \draw (6.5,4.66667) node { 42 }; 
        \draw (7.5,4.66667) node { 52 }; 
        \draw (8.5,4.66667) node { 61 }; 
        \draw (9.5,4.66667) node { 66 }; 
        \draw (0.5,2.8) node { 7 }; 
        \draw (1.5,2.8) node { 10 }; 
        \draw (2.5,2.8) node { 18 }; 
        \draw (3.5,2.8) node { 21 }; 
        \draw (4.5,2.8) node { 32 }; 
        \draw (5.5,2.8) node { 36 }; 
        \draw (6.5,2.8) node { 47 }; 
        \draw (7.5,2.8) node { 69 }; 
        \draw (8.5,2.8) node { 71 }; 
        \draw (9.5,2.8) node { 79 }; 
        \draw (0.5,2.0) node { 8 }; 
        \draw (1.5,2.0) node { 12 }; 
        \draw (2.5,2.0) node { 20 }; 
        \draw (3.5,2.0) node { 22 }; 
        \draw (4.5,2.0) node { 38 }; 
        \draw (5.5,2.0) node { 43 }; 
        \draw (6.5,2.0) node { 49 }; 
        \draw (7.5,2.0) node { 78 }; 
        \draw (8.5,2.0) node { 81 }; 
        \draw (9.5,2.0) node { 86 };

        \draw[red,line width=1mm] (3,30) -- (3,7.0); 
        \draw[red,line width=1mm] (2,7.0) -- (2,3.5); 
        \draw[red,line width=1mm] (1,3.5) -- (1,2.33333); 
        \draw[red,line width=1mm] (1,2.33333) -- (1,1.75); 
        \draw[red,line width=1mm] (0,1.75) -- (0,1.4); 
        \draw[red,line width=1mm] (0,1.4) -- (0,1.16667); 
        \draw[red,line width=1mm] (0,1.16667) -- (0,1.0); 
        \draw[red,line width=1mm] (0,1.0) -- (0,0.875); 
        \draw[red,line width=1mm] (0,0.875) -- (0,0.77778); 
        \draw[red,line width=1mm] (0,0.77778) -- (0,0.7); 
        \draw[red,line width=1mm] (0,0.7) -- (0,0.63636); 
        \draw[red,line width=1mm] (0,0.63636) -- (0,0.58333); 
        \draw[red,line width=1mm] (0,0.58333) -- (0,0.53846); 
        \draw[red,line width=1mm] (0,0.53846) -- (0,0.5); 
        \draw[red,line width=1mm] (0,0.5) -- (0,0.46667); 
        \draw[red,line width=1mm] (0,0.46667) -- (0,0.4375); 
        \draw[red,line width=1mm] (0,0.4375) -- (0,0.41176); 
        \draw[red,line width=1mm] (0,0.41176) -- (0,0.38889); 
        \draw[red,line width=1mm] (0,0.38889) -- (0,0.36842); 
        \draw[red,line width=1mm] (0,0.36842) -- (0,0.35); 
        \draw[red,line width=1mm] (0,0.35) -- (0,0.33333); 
        \draw[red,line width=1mm] (0,0.33333) -- (0,0.31818); 
        \draw[red,line width=1mm] (0,0.31818) -- (0,0.30435); 
        \draw[red,line width=1mm] (0,0.30435) -- (0,0.29167); 
        \draw[red,line width=1mm] (0,0.29167) -- (0,0.28); 
        \draw[red,line width=1mm] (0,0.28) -- (0,0.26923); 
        \draw[red,line width=1mm] (0,0.26923) -- (0,0.25926); 
        \draw[red,line width=1mm] (0,0.25926) -- (0,0.25); 
        \draw[red,line width=1mm] (0,0.25) -- (0,0.24138); 
        \draw[red,line width=1mm] (0,0.24138) -- (0,0.23333); 
        \draw[red,line width=1mm] (0,0.23333) -- (0,0.22581); 
        \draw[red,line width=1mm] (0,0.22581) -- (0,0.21875); 
        \draw[red,line width=1mm] (0,0.21875) -- (0,0.21212); 
        \draw[red,line width=1mm] (0,0.21212) -- (0,0.20588); 
        \draw[red,line width=1mm] (0,0.20588) -- (0,0.2); 
        \draw[red,line width=1mm] (0,0.2) -- (0,0.19444); 
        \draw[red,line width=1mm] (0,0.19444) -- (0,0.18919); 
        \draw[red,line width=1mm] (0,0.18919) -- (0,0.18421); 
        \draw[red,line width=1mm] (0,0.18421) -- (0,0.17949); 
        \draw[red,line width=1mm] (0,0.17949) -- (0,0.175); 
        \draw[red,line width=1mm] (0,0.175) -- (0,0.17073); 
        \draw[red,line width=1mm] (0,0.17073) -- (0,0.16667); 
        \draw[red,line width=1mm] (0,0.16667) -- (0,0.16279); 
        \draw[red,line width=1mm] (0,0.16279) -- (0,0.15909); 
        \draw[red,line width=1mm] (0,0.15909) -- (0,0.15556); 
        \draw[red,line width=1mm] (0,0.15556) -- (0,0.15217); 
        \draw[red,line width=1mm] (0,0.15217) -- (0,0.14894); 
        \draw[red,line width=1mm] (0,0.14894) -- (0,0.14583); 
        \draw[red,line width=1mm] (0,0.14583) -- (0,0.14286); 
        \draw[red,line width=1mm] (0,0.14286) -- (0,0.14); 
        \draw[red,line width=1mm] (0,0.14) -- (0,0.13725); 
        \draw[red,line width=1mm] (0,0.13725) -- (0,0.13462); 
        \draw[red,line width=1mm] (0,0.13462) -- (0,0.13208); 
        \draw[red,line width=1mm] (0,0.13208) -- (0,0.12963); 
        \draw[red,line width=1mm] (0,0.12963) -- (0,0.12727); 
        \draw[red,line width=1mm] (0,0.12727) -- (0,0.125); 
        \draw[red,line width=1mm] (0,0.125) -- (0,0.12281); 
        \draw[red,line width=1mm] (0,0) -- (0,0.125); 
\end{scope}

\foreach \x in {1,...,3}
{ \draw[very thick]  (\x,0 ) +(0,5pt) -- +(0,-5pt) node[anchor=north] {$ \x  $}; }

\foreach \y in {1,...,7}
{ \draw[very thick]  (0, \y ) +(5pt,0) -- +(-5pt,0) node[anchor=east] {$ \y  $}; }

\draw [->,very thick] (0,0) -- (3.8,0) node[anchor=north west] {$x$};
\draw [->,very thick] (0,0) -- (0,7.8) node[anchor=south] {$z=\frac{2m}{y}$};

\end{tikzpicture}
\label{fig:tableauProjection} } \caption{ %
\protect\subref{fig:tableauFrench} 
The French convention for drawing tableaux.
An example of an infinite standard Young tableau $\tableau$ sampled with the Plancherel
distribution. The highlighted boxes form a~bumping route obtained by adding 
the~entry $m+\nicefrac{1}{2}$ for \mbox{$m=3$}. The thick red line is the corresponding
plot of the~function 
$x(y) = \tableau_{\leftsquigarrow m+\nicefrac{1}{2}}(\lfloor y \rfloor)$.
\linebreak 
\protect\subref{fig:tableauProjection}~The same tableau shown in the~projective
coordinates $Oxz$ with $z=\frac{2m}{y}$. The thick red line is the plot of the~function 
$x(z)= \tableau_{\leftsquigarrow m+\nicefrac{1}{2}}(\lfloor \frac{2m}{z} \rfloor)$.
}
\end{figure}

\subsection{Bumping route}

The~\emph{bumping route} consists of the~boxes the~entries of which were
changed by the~action of Schensted insertion, 
including the~last, newly created box, see \cref{fig:RSKb,fig:RSKc}. 
The~bumping route will be denoted by
$\tableau{\leftsquigarrow a}$ or by $\tableau_{\leftsquigarrow a}$ depending on
current typographic needs. In any row $y\in\N_0$ there is at most one box from
the~bumping route $\tableau{\leftsquigarrow a}$; we denote by
$\tableau_{\leftsquigarrow a}(y)$ its $x$-coordinate. We leave
$\tableau_{\leftsquigarrow a}(y)$ undefined if such a~box does not exist. In
this way
\begin{equation}
\label{eq:bumping_points} 
\tableau{\leftsquigarrow a}~= 
\Big\{ \big( \tableau_{\leftsquigarrow a}(y),y\big) : y\in\N_0\Big\}.
\end{equation}
For example, for the~tableau $\tableau$ from \cref{fig:RSKa} and $a=18$ we have
\[ \tableau_{\leftsquigarrow a}(y) = 
\begin{cases}
1 & \text{for } y\in\{0,1\},  \\
0 & \text{for } y\in\{2,3\}. 
\end{cases}
\]

The~bumping route $\tableau{\leftsquigarrow a}$ can~be visualized either as a
collection of its boxes or as a~plot of the~function
\[ x(y) = \tableau_{\leftsquigarrow a}(\lfloor y \rfloor), \qquad y\in\R_+, \]
cf.~the~thick red line on \cref{fig:tableauFrench}.

\subsection{Bumping routes for infinite tableaux} 

Any bumping route which corresponds to an~insertion to a~\emph{finite} tableau
is, well,  also finite. This is disadvantageous when one aims at the
asymptotics of such a~bumping route in a~row of index $y$ in the~limit
$y\to\infty$. For such problems it would be preferable to work in a~setup in
which the~bumping routes are infinite; we present the~details in the~following.

Let us fix the~value of an~integer $m\in\N_0$.
Now, for an~integer $n\geq m$ we consider a~real number $0<\alpha_n<1$ and a
finite sequence $\xi=(\xi_1,\dots,\xi_n)$ of independent, identically distributed
random variables with the~uniform distribution $U(0,1)$ on the~unit interval
$[0,1]$. In order to remove some randomness from this picture we will condition
the~choice of $\xi$ in such a~way that there are exactly $m$ entries of $\xi$ which
are smaller than~$\alpha_n$; heuristically this situation is similar to a
scenario without conditioning, for the~choice of
\begin{equation}
\label{eq:alpha}
\alpha_n=\frac{m}{n}. 
\end{equation}
We will study the~bumping route
\begin{equation}
\label{eq:bumping-1}
P(\xi_1,\dots,\xi_n) \leftsquigarrow \alpha_n 
\end{equation}
in the~limit as $n\to\infty$ and $m$ is fixed.

Without loss of generality we may assume that the~entries of the~sequence~$\xi$
are all different. Let $\pi\inrv\Sym{n}$ be the~unique permutation which encodes
the~relative order of the~entries in the~sequence $\xi$, that is
\[ \left( \pi_i < \pi_j \right) \iff \left( X_i < X_j \right) \]
for any $1\leq i,j\leq n$. Since the~algorithm behind the Robinson--Schensted--Knuth
correspondence depends only on the~relative order of the~involved numbers and
not their exact values, it follows that the~bumping route \eqref{eq:bumping-1}
coincides with the~bumping route
\begin{equation}
\label{eq:bumping-2}
 P(\pi_1,\dots,\pi_n) \leftsquigarrow m+\nicefrac{1}{2}.
\end{equation}

The~probability distribution of $\pi$ is the~uniform measure on $\Sym{n}$; it
follows that the~probability distribution of the~tableau $P(\pi_1,\dots,\pi_n)$
which appears in \eqref{eq:bumping-2} is the~Plancherel measure $\Plancherel_n$
on the~set of standard Young tableaux with $n$ boxes. Since such a
Plancherel-distributed random tableau with $n$~boxes can~be viewed as a
truncation of an~infinite standard Young tableau~$\tableau$ with the Plancherel
distribution, the~bumping routes \eqref{eq:bumping-1} and \eqref{eq:bumping-2}
can~be viewed as truncations of the~infinite bumping route
\begin{equation}
\label{eq:bumping-3}
 \tableau\leftsquigarrow m + \nicefrac{1}{2}, 
 \end{equation}
see \cref{fig:tableauFrench} for an~example. 

\subsection{The~main problem: asymptotics of infinite bumping routes}

The~aim of the~current paper is to investigate the~asymptotics of the
\emph{infinite} bumping route \eqref{eq:bumping-3} in the~limit $m\to\infty$.

Heuristically, this corresponds to investigation of the~asymptotics of 
the~\emph{finite} bumping routes \eqref{eq:bumping-1} in the~simplified setting
when we do not condition over some additional properties of $\xi$, in the~scaling
in which $\alpha_n$ does not tend to zero too fast (so that $\lim_{n\to\infty}
\alpha_n n =\infty$, cf.~\eqref{eq:alpha}), but on the~other hand $\alpha_n$
should tend to zero fast enough so that the~bumping route is long enough that
our asymptotic questions are well defined. We will not pursue in this direction
and we will stick to the investigation of the~\emph{infinite} bumping route~\eqref{eq:bumping-3}.

\medskip

Even though Romik and {\Sniady} \cite{RomikSniadyBumping} considered the
asymptotics of \emph{finite} bumping routes, their discussion is nevertheless
applicable in our context. It shows that in the~\emph{balanced scaling} when we
focus on the~part of the~bumping route with the~Cartesian~coordinates $(x,y)$ of
magnitude $x,y=O \! \left( \sqrt{m} \right)$, the~shape of the~bumping route
(scaled down by the~factor $\frac{1}{\sqrt{m}}$) converges in probability
towards an~explicit curve, which we refer to as \emph{the~limit bumping curve}, see
\cref{fig:simulated-linear} for an~illustration.

\newcommand{\dir}[1]{figures-Poisson/multiple_bumping_routes/#1}
\newcommand{\file}[1]{file {root/figures-Poisson/multiple_bumping_routes/#1}}

\begin{figure}
\centering
\begin{tikzpicture}[scale=0.4]
\definecolor{Set1-9-1}{RGB}{228,26,28}
\definecolor{Set1-9-A}{RGB}{228,26,28}
\definecolor{Set1-9-2}{RGB}{55,126,184}
\definecolor{Set1-9-B}{RGB}{55,126,184}
\definecolor{Set1-9-3}{RGB}{77,175,74}
\definecolor{Set1-9-C}{RGB}{77,175,74}
\definecolor{Set1-9-4}{RGB}{152,78,163}
\definecolor{Set1-9-D}{RGB}{152,78,163}
\definecolor{Set1-9-5}{RGB}{255,127,0}
\definecolor{Set1-9-E}{RGB}{255,127,0}
\definecolor{Set1-9-6}{RGB}{255,255,51}
\definecolor{Set1-9-F}{RGB}{255,255,51}
\definecolor{Set1-9-7}{RGB}{166,86,40}
\definecolor{Set1-9-G}{RGB}{166,86,40}
\definecolor{Set1-9-8}{RGB}{247,129,191}
\definecolor{Set1-9-H}{RGB}{247,129,191}
\definecolor{Set1-9-9}{RGB}{153,153,153}
\definecolor{Set1-9-I}{RGB}{153,153,153} 
     
\begin{scope}
    \clip (-2,-2) rectangle (30,32.5);
    \draw[black!10] (0,0) grid (22.5,32.5);
    \draw[white,thick,double distance=6mm,double=Set1-9-1,opacity=0.5] 
    plot[scale=1] file{\dir{100.5-linear-A.txt}};
    \draw[ultra thick,Set1-9-1] 
    plot[scale=1] file{\dir{100.5-linear-A.txt}};
    
    \draw[white,thick,double distance=4mm,double=Set1-9-2,opacity=0.5]
    plot[scale=1] file{\dir{100.5-linear-B.txt}};    
    \draw[ultra thick,Set1-9-2]
    plot[scale=1] file{\dir{100.5-linear-B.txt}};    
    
    \draw[white,thick,double distance=2.5mm,double=Set1-9-3,opacity=0.5]
    plot[scale=1] file{\dir{100.5-linear-C.txt}};    
    \draw[ultra thick,Set1-9-3]
    plot[scale=1] file{\dir{100.5-linear-C.txt}};    
    
    \draw[white,thick,double distance=0.5mm,double=Set1-9-4,opacity=0.5]
    plot[scale=1] file{\dir{100.5-linear-D.txt}};   
    \draw[ultra thick,Set1-9-4]
    plot[scale=1] file{\dir{100.5-linear-D.txt}};   
    
    \draw[white,thick,double distance=2pt,double=black] plot[smooth,scale=sqrt(100.5)] 
    file {\dir{bumpingcurve.txt}};
    
    \draw[black,ultra thick,dashed] plot[smooth,scale=sqrt(100.5)]  file {\dir{2x-res.txt}}; 
    
\end{scope}

    \draw[thick,->] (0,0) -- (23,0) node[anchor=north west]{$x$};
    \draw[thick,->] (0,0) -- (0,33)   node[anchor=east]{$y$};
    
    \foreach \x in {5,10,15,20}
    { \draw  (\x,0 ) +(0,10pt) -- +(0,-10pt) node[anchor=north] {$ \x  $}; }

    \foreach \y in {5,10,15,20,25,30}
    { \draw  (0,\y) +(10pt,0) -- +(-10pt,0) node[anchor=east] {$ \y  $}; }

\end{tikzpicture}

\caption{Four sample bumping routes corresponding to an~insertion
    $\tableau\leftarrow m+\nicefrac{1}{2}$ for $m=100$ and a random infinite standard
    Young tableau $\tableau$ which was sampled according to the Plancherel measure.
    In order to improve visibility, each bumping route is visualized as the plot of
    the~corresponding function $y\mapsto \tableau_{\leftsquigarrow
        m+\nicefrac{1}{2}}(\lfloor y \rfloor)$, cf.~\cref{fig:tableauFrench}, and not as
    a~collection of boxes. Colour and thickness were added in order to help identify
    the routes. The solid line is the (rescaled) \emph{limit bumping curve}. The
    dashed line is the~hyperbola $x y = 2m$, cf.~Equation~\eqref{eq:hyperbola}.}
\label{fig:simulated-linear}
\end{figure}

In the~current paper we go beyond the~scaling used by Romik and {\Sniady} and
investigate the~part of the~bumping route with the~Cartesian~coordinates of order
$x=O(1)$ and $y\gg \sqrt{m}$. This part of the~bumping curves was not visible
on \cref{fig:simulated-linear}; in order to reveal it one can~use the
semi-logarithmic plot, cf.~\cref{fig:simulated-log,fig:simulated-log-stretch-2}.

\subsection{The~naive hyperbola}
\label{sec:naive}

The~first step in this direction would be to stretch the
validity of the~results of Romik and {\Sniady} \cite{RomikSniadyBumping} beyond
their limitations and to expect that the~limit bumping curve describes the
asymptotics of the~bumping routes also in this new scaling. 
This would correspond to the~investigation of the~asymptotics of the
(non-rescaled) limit bumping curve $\big(x(y), y\big)$ in the~regime
$y\to\infty$. The~latter analysis was performed by {\Marciniak}
\cite{Marciniak2020}; one of his results is that
\[ \lim_{y\to\infty} x(y) y = 2;\]
in other words, for $y\to \infty$ the~\emph{non-rescaled} limit bumping curve
can~be approximated by the~hyperbola~$x y = 2$ while its \emph{rescaled}
counterpart which we consider in the~current paper by the~hyperbola
\begin{equation}
\label{eq:hyperbola}
x y = 2m
\end{equation} 
which is shown on \cref{fig:simulated-linear} as the~dashed line. At the very end of
\cref{sec:naive-hyperbola} we will discuss the~extent to which this naive
approach manages to confront the~reality.

\newcommand{\dir}[1]{figures-Poisson/multiple_bumping_routes/#1}

\begin{figure}
\centering
\begin{tikzpicture}[xscale=0.4,yscale=3]
\definecolor{Set1-9-1}{RGB}{228,26,28}
\definecolor{Set1-9-A}{RGB}{228,26,28}
\definecolor{Set1-9-2}{RGB}{55,126,184}
\definecolor{Set1-9-B}{RGB}{55,126,184}
\definecolor{Set1-9-3}{RGB}{77,175,74}
\definecolor{Set1-9-C}{RGB}{77,175,74}
\definecolor{Set1-9-4}{RGB}{152,78,163}
\definecolor{Set1-9-D}{RGB}{152,78,163}
\definecolor{Set1-9-5}{RGB}{255,127,0}
\definecolor{Set1-9-E}{RGB}{255,127,0}
\definecolor{Set1-9-6}{RGB}{255,255,51}
\definecolor{Set1-9-F}{RGB}{255,255,51}
\definecolor{Set1-9-7}{RGB}{166,86,40}
\definecolor{Set1-9-G}{RGB}{166,86,40}
\definecolor{Set1-9-8}{RGB}{247,129,191}
\definecolor{Set1-9-H}{RGB}{247,129,191}
\definecolor{Set1-9-9}{RGB}{153,153,153}
\definecolor{Set1-9-I}{RGB}{153,153,153}      
    \clip (-3,-0.5) rectangle (25,3.5);
    \draw[black!10] (0,-2) grid[ystep=1] (22,10);

\draw[white,thick,double distance=6mm,double=Set1-9-1,opacity=0.5] 
plot[scale=1] file {\dir{100.5-log-A.txt}};
\draw[ultra thick,Set1-9-1] 
plot[scale=1] file {\dir{100.5-log-A.txt}};

\draw[white,thick,double distance=4mm,double=Set1-9-2,opacity=0.5]
plot[scale=1] file {\dir{100.5-log-B.txt}};    
\draw[ultra thick,Set1-9-2]
plot[scale=1] file {\dir{100.5-log-B.txt}};    

\draw[white,thick,double distance=2.5mm,double=Set1-9-3,opacity=0.5]
plot[scale=1] file {\dir{100.5-log-C.txt}};    
\draw[ultra thick,Set1-9-3]
plot[scale=1] file {\dir{100.5-log-C.txt}};    

\draw[white,thick,double distance=0.5mm,double=Set1-9-4,opacity=0.5]
plot[scale=1] file {\dir{100.5-log-D.txt}};   
\draw[ultra thick,Set1-9-4]
plot[scale=1] file {\dir{100.5-log-D.txt}};

    \draw[thick,->] (0,0) -- (22.5,0) node[anchor=north west]{$x$};
    \draw[thick,->] (0,-3) -- (0,3.3)   node[anchor=east]{$y$};
    
    \foreach \x in {5,10,15,20}
    { \draw  (\x,0 ) +(0,1pt) -- +(0,-1pt) node[anchor=north] {$ \x  $}; }

    \foreach \y/\yname in {0/1,1/10,2/100,3/1000}
    { \draw  (0,\y) +(10pt,0) -- +(-10pt,0) node[anchor=east] {$ \yname  $}; }

\draw[white,thick,double distance=2pt,double=black,shift={(0, log10(sqrt(100.5)))}] plot[smooth,xscale=sqrt(100.5),yscale=1] file {\dir{bumping-log.txt}};

\draw[white,thick,double distance=2pt,double=black,dashed,shift={(0,log10(sqrt(100.5)))}] plot[smooth,xscale=sqrt(100.5),yscale=1] file {\dir{2x-log-res.txt}}; 

\end{tikzpicture}

\caption{The content of \cref{fig:simulated-linear} shown with the axis $y$ in the logarithmic scale.}
\label{fig:simulated-log}

\end{figure}
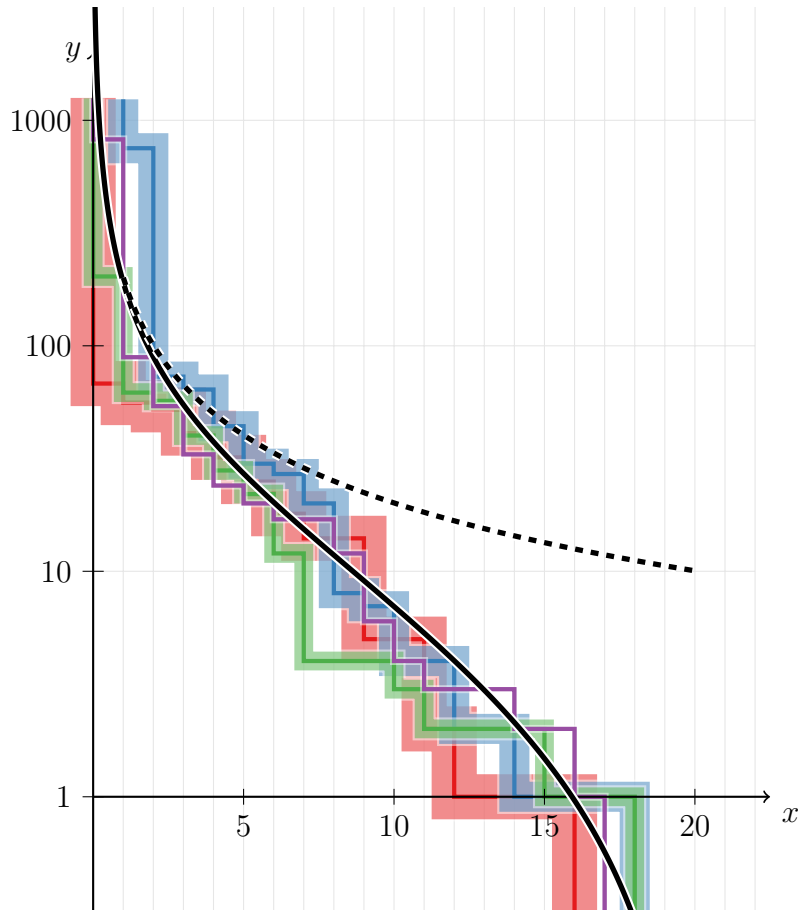

\newcommand{\dir}[1]{figures-Poisson/multiple_bumping_routes/#1}

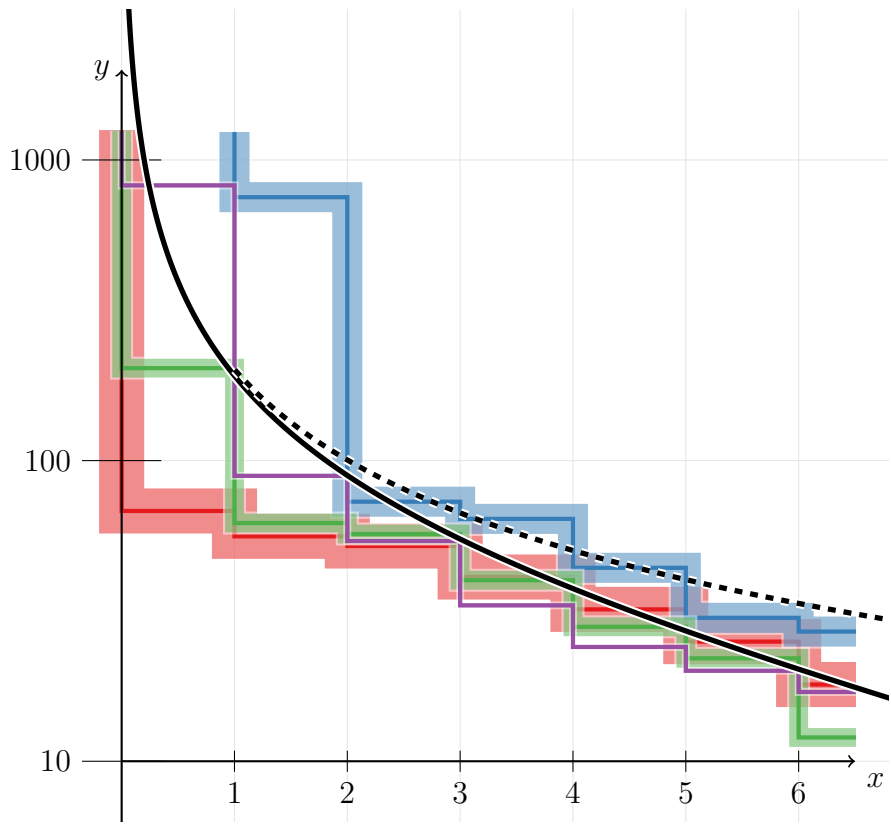
\begin{figure}
\centering
\begin{tikzpicture}[xscale=1.5,yscale=4]
\definecolor{Set1-9-1}{RGB}{228,26,28}
\definecolor{Set1-9-A}{RGB}{228,26,28}
\definecolor{Set1-9-2}{RGB}{55,126,184}
\definecolor{Set1-9-B}{RGB}{55,126,184}
\definecolor{Set1-9-3}{RGB}{77,175,74}
\definecolor{Set1-9-C}{RGB}{77,175,74}
\definecolor{Set1-9-4}{RGB}{152,78,163}
\definecolor{Set1-9-D}{RGB}{152,78,163}
\definecolor{Set1-9-5}{RGB}{255,127,0}
\definecolor{Set1-9-E}{RGB}{255,127,0}
\definecolor{Set1-9-6}{RGB}{255,255,51}
\definecolor{Set1-9-F}{RGB}{255,255,51}
\definecolor{Set1-9-7}{RGB}{166,86,40}
\definecolor{Set1-9-G}{RGB}{166,86,40}
\definecolor{Set1-9-8}{RGB}{247,129,191}
\definecolor{Set1-9-H}{RGB}{247,129,191}
\definecolor{Set1-9-9}{RGB}{153,153,153}
\definecolor{Set1-9-I}{RGB}{153,153,153}      
    \clip (-1,0.8) rectangle (6.8,3.5);
    \draw[black!10] (0,-2) grid (10,4);
    
\begin{scope}
\clip(-0.5,1) rectangle (6.5,3.3);    
\draw[white,thick,double distance=6mm,double=Set1-9-1,opacity=0.5] plot[scale=1] file {\dir{100.5-log-A.txt}};
\draw[ultra thick,Set1-9-1] plot[scale=1] file {\dir{100.5-log-A.txt}};

\draw[white,thick,double distance=4mm,double=Set1-9-2,opacity=0.5] plot[scale=1] file {\dir{100.5-log-B.txt}};    
\draw[ultra thick,Set1-9-2] plot[scale=1] file {\dir{100.5-log-B.txt}};    

\draw[white,thick,double distance=2.5mm,double=Set1-9-3,opacity=0.5] plot[scale=1] file {\dir{100.5-log-C.txt}};    
\draw[ultra thick,Set1-9-3] plot[scale=1] file {\dir{100.5-log-C.txt}};    

\draw[white,thick,double distance=0.5mm,double=Set1-9-4,opacity=0.5] plot[scale=1] file {\dir{100.5-log-D.txt}};   
\draw[ultra thick,Set1-9-4] plot[scale=1] file {\dir{100.5-log-D.txt}};   
\end{scope}

\draw[thick,->] (0,1) -- (6.5,1) node[anchor=north west]{$x$};
\draw[thick,->] (0,-3) -- (0,3.3)   node[anchor=east]{$y$};

\foreach \x in {1,...,6}
{ \draw  (\x,1) +(0,1pt) -- +(0,-1pt) node[anchor=north] {$ \x  $}; }

\foreach \y/\yname in {0/1,1/10,2/100,3/1000}
{ \draw  (0,\y) +(10pt,0) -- +(-10pt,0) node[anchor=east] {$ \yname  $}; }

\draw[white,thick,double distance=2pt,double=black,shift={(0,log10(sqrt(100.5)))}]  plot[smooth,xscale=sqrt(100.5),yscale=1] file {\dir{bumping-log.txt}};

\draw[white,thick,double distance=2pt,double=black,dashed,shift={(0, log10(sqrt(100.5)))}] plot[smooth,xscale=sqrt(100.5),yscale=1]  file {\dir{2x-log-res.txt}}; 

\end{tikzpicture}

\caption{Zoom on a part of \cref{fig:simulated-log}. In this kind of scaling
    when $x=O(1)$ and $y\gg \sqrt{m}$ the results of Romik and {\Sniady}
    \cite{RomikSniadyBumping} are \emph{not} applicable and the~rescaled limit bumping curve
    (the solid line) is shown for illustration purposes only.}
\label{fig:simulated-log-stretch-2}

\end{figure}

\subsection{In which row a~bumping route reaches a~given column?}
\label{sec:in-which-row}

Let us fix some (preferably infinite) standard Young tableau $\tableau$.
The~bumping route in each step jumps to the~next row, directly up or to the
left to the~original column; in other words  
\[ \tableau_{\leftsquigarrow m+\nicefrac{1}{2}} (0) \geq  
\tableau_{\leftsquigarrow m+\nicefrac{1}{2}}(1) \geq \cdots \]
is a~weakly decreasing sequence of non-negative integers.

For $x,m\in\N_0$ we denote by 
\begin{equation}
\label{eq:Y} 
Y_x^{[m]} = Y_x = \min\left\{ y\in\N_0 :  
                \tableau_{\leftsquigarrow m+\nicefrac{1}{2}}(y) \leq x \right\}
\end{equation}
the~index of the~first row in which the~bumping route 
$\tableau {\leftsquigarrow m+\nicefrac{1}{2}}$ reaches the~column with
the index $x$ (or less, if the~bumping route skips the~column $x$ completely). 
For example, for the~tableau $\tableau$ from \cref{fig:tableauFrench} we have
\[ Y_0^{[3]}= 4, \quad Y_1^{[3]}= 2, \quad Y_2^{[3]}= 1, \quad 
Y_3^{[3]}=Y_4^{[3]}=\cdots= 0.
\]
If such a~row does not exist we set $Y_x=\infty$; the~following result shows
that we do not have to worry about such a~scenario.

\begin{proposition}
    \label{prop:is-finite}
    For a~random infinite standard Young tableau $\tableau$ with the~Plancherel
distribution
    \[ Y_x^{[m]} < \infty \qquad \text{for all } x,m\in\N_0\]
    holds true almost surely.
\end{proposition}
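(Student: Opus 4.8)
The plan is to reduce the statement to a single fixed $m$, pass to finite truncations, and then translate the question into the combinatorics of longest decreasing subsequences.

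Since $Y_x^{[m]}$ is weakly decreasing in $x$ (enlarging $x$ only relaxes the defining condition $\tableau_{\leftsquigarrow m+\nicefrac{1}{2}}(y)\le x$), it suffices to prove that $Y_0^{[m]}<\infty$ almost surely for each fixed $m$, and then to intersect over the countably many values of $m$. Fix $m$. Almost surely the shape of the Plancherel-distributed infinite tableau $\tableau$ is the whole quarter-plane --- by the Logan--Shepp--Vershik--Kerov limit shape, each fixed row of $\lambda^{(n)}$ has length of order $\sqrt n\to\infty$ --- so every row of $\tableau$ is infinite and the insertion of $m+\nicefrac{1}{2}$ never terminates. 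Its $x$-coordinates $c_y:=\tableau_{\leftsquigarrow m+\nicefrac{1}{2}}(y)$ then form a weakly decreasing sequence of non-negative integers with $c_0\le m$ (because $\tableau_{m,0}\ge m+1>m+\nicefrac{1}{2}$), so $c_y\downarrow c_\infty$ for some $c_\infty\in\{0,\dots,m\}$; the assertion $Y_0^{[m]}<\infty$ is exactly $c_\infty=0$.

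To get hold of $c_\infty$, for $n\ge m$ let $\tableau^{(n)}$ be the finite standard Young tableau obtained by keeping the boxes of $\tableau$ with entries $\le n$; it has the Plancherel distribution $\Plancherel_n$. Since the rows of $\tableau^{(n)}$ are the initial segments of the rows of $\tableau$ formed by entries $\le n$, a direct comparison of the two insertions shows that the finite bumping route $\tableau^{(n)}\leftsquigarrow m+\nicefrac{1}{2}$ follows the infinite one $\tableau\leftsquigarrow m+\nicefrac{1}{2}$ exactly up to the first row $y^*(n)$ in which the value bumped by the infinite route exceeds $n$, and on that row it terminates by adjoining a new box in the column $c_{y^*(n)}$. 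The bumped values strictly increase, so $y^*(n)\to\infty$; hence the terminal column $c(n)$ of the finite route equals $c_{y^*(n)}$, is weakly decreasing in $n$, and $c(n)=c_\infty$ for all large $n$. Consequently $\{c_\infty=0\}=\bigcup_n\{c(n)=0\}$ is an increasing union, $\PP(c_\infty=0)=\lim_n\PP(c(n)=0)$, and it remains to prove $\PP(c(n)=0)\to1$.

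Now realize $\tableau^{(n)}=P(\pi)$ for a uniformly random $\pi\in\Sym n$. Then $\tableau^{(n)}\leftsquigarrow m+\nicefrac{1}{2}$ is the bumping route of $P(\pi)\leftarrow m+\nicefrac{1}{2}$, which by definition equals $P(\pi_1,\dots,\pi_n,m+\nicefrac{1}{2})$; its last box lies in column $0$ --- that is, $c(n)=0$ --- exactly when the $\RSK$ shape gains a row. Since the number of rows of $P(w)$ equals the length of the longest strictly decreasing subsequence of $w$, this happens if and only if appending $m+\nicefrac{1}{2}$ lengthens the longest decreasing subsequence, i.e.\ if and only if some longest decreasing subsequence of $\pi$ has its smallest element $>m$, equivalently if and only if deleting the $m$ smallest values $1,\dots,m$ from $\pi$ does not shorten its longest decreasing subsequence. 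Thus everything comes down to showing
\[
\PP\bigl(\text{deleting }1,\dots,m\text{ from a uniform }\pi\in\Sym n\text{ leaves the longest decreasing subsequence intact}\bigr)\xrightarrow[n\to\infty]{}1 .
\]

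This last estimate is the crux. One must show that the $m$ smallest values of a random permutation --- occupying $m$ essentially uniform positions --- are typically \emph{inessential} for the longest decreasing subsequence, so that it can be rerouted around them. I would prove it either by exhibiting, with probability tending to $1$, far more than $m$ internally disjoint longest (or near-longest, then patched) decreasing subsequences, at least one of which avoids the $m$ prescribed positions; or by invoking the multilevel edge asymptotics of Plancherel measure ($\lambda_i^{(n)}\approx 2\sqrt n+n^{1/6}\zeta_{i+1}$ with $\zeta_1>\zeta_2>\cdots$ the Airy point process) to bound from below the smallest element of a well-chosen longest decreasing subsequence. Controlling the joint structure of all longest decreasing subsequences near their small-valued ends is the delicate point; granting the displayed limit, the reductions above finish the proof. (An equivalent-looking route avoids truncation and restates $c_\infty=0$ as the recurrence property that, for every $c\ge1$, columns $c-1$ and $c$ of $\lambda^{(n)}$ have equal length for infinitely many $n$, which one would establish by a Borel--Cantelli argument accounting for the Markov dependence of the growth process across dyadic time windows.)
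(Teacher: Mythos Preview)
Your reductions are sound: monotonicity in $x$ lets you fix $x=0$; the comparison between the infinite and the truncated bumping routes is correct (your description of the terminal column $c(n)=c_{y^*(n)}$ checks out line by line); and the translation of $c(n)=0$ into ``appending $m+\nicefrac{1}{2}$ creates a new row of $P(\pi)$'' and then into ``some longest decreasing subsequence of $\pi$ avoids $1,\dots,m$'' is valid.

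The problem is that you stop exactly at the hard step. The displayed limit
\[
\PP\bigl(\text{deleting }1,\dots,m\text{ from a uniform }\pi\in\Sym n\text{ does not shorten the LDS}\bigr)\xrightarrow[n\to\infty]{}1
\]
is the entire content of the proposition once your reductions are in place, and you do not prove it. The two directions you sketch are not lightweight: exhibiting $\gg m$ internally disjoint longest decreasing subsequences requires control of the full optimizer geometry (not just $\lambda_0$), and the Airy route needs a statement about the \emph{minimum value} along some optimal subsequence, which is not what the edge asymptotics give directly. The parenthetical Borel--Cantelli alternative has the same status: you would need quantitative bounds on $\PP(\lambda^{(n)}_{c-1}=\lambda^{(n)}_c)$ and some decorrelation across time windows, neither of which you supply. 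As written, the proof is incomplete.

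For comparison, the paper closes this gap by a different and short route. Via the correspondence between the bumping route and the trajectory of the symbol $\infty$ in
\[
P(\xi_1,\dots,\xi_m,\infty,\xi_{m+1},\dots,\xi_u),
\]
the event $c(u)\ge 1$ becomes $x_m^{(u)}\ge 1$. A transpose identity (comparing the augmented processes initiated at times $m$ and $u-m$) turns this into $y_{u-m}^{(u)}\ge 1$, i.e.\ the event that among the last $m$ steps of the Plancherel growth process (from time $u-m$ to time $u$) at least one growth occurs in the bottom row. Their Theorem~2.2 says these $m$ growths behave like independent Bernoulli$\bigl(1/\sqrt{u-m}\bigr)$ trials up to $o(m/\sqrt{u})$ in total variation, giving
\[
\PP\bigl(c(u)\ge 1\bigr)=\PP\bigl(y_{u-m}^{(u)}\ge 1\bigr)=\frac{m}{\sqrt{u}}+o\!\left(\frac{1}{\sqrt{u}}\right)\xrightarrow[u\to\infty]{}0,
\]
which is exactly your missing limit. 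So your crux is true, but its proof lives in the augmented--process machinery rather than in a bare LDS argument; if you want to keep your framing, the cleanest completion is to import that estimate rather than to pursue the disjoint-optimizers or Airy routes.
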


The~proof is postponed to \cref{sec:proof-prop:is-finite}.
For a~sketch of the~proof of an~equivalent result see the~work of Vershik 
\russian%
{}%
{\cite[Predlozhenie 4]{Vershik2020}}
who uses different methods.

\medskip

\begin{theorem}[The~main result]
    \label{thm:poisson-point} 
    
    Assume that $\tableau$ is an~infinite standard
Young tableau with the~Plancherel distribution. With the~above notations,
the~random set
    \[    \left( \frac{2 m}{Y^{[m]}_0}, \frac{2 m}{Y^{[m]}_1},  \dots \right) \]
    converges in distribution, as $m\to\infty$, to the Poisson point process on
$\R_+$ with the~unit intensity.
\end{theorem}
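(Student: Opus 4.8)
The plan is to recast the point process $\{2m/Y^{[m]}_x : x\in\N_0\}$ through its counting function and prove the latter converges to a unit‑intensity Poisson process. For $t>0$ set
\[
 C_m(t)\;=\;\tableau_{\leftsquigarrow m+\nicefrac{1}{2}}\!\big(\lceil 2m/t\rceil-1\big),
\]
the column occupied by the bumping route in the row of index $\approx 2m/t$. One checks from \eqref{eq:Y} that $C_m(t)=\#\{x\in\N_0 : 2m/Y^{[m]}_x\le t\}$, so $C_m$ is precisely the counting function of our point process, and \cref{prop:is-finite} guarantees $C_m(t)<\infty$ for each $t$. Since $C_m$ is non‑decreasing and $\N_0$‑valued, it then suffices to prove that for every $0<t_1<\dots<t_k$ the increments
\[
 C_m(t_1),\ C_m(t_2)-C_m(t_1),\ \dots,\ C_m(t_k)-C_m(t_{k-1})
\]
converge jointly in distribution, as $m\to\infty$, to independent Poisson random variables with means $t_1,\,t_2-t_1,\,\dots,\,t_k-t_{k-1}$.

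The second step is a description of the bumping route adapted to this limit. Writing $v_0=m+\nicefrac{1}{2}$ and, for $y\ge0$, letting $v_{y+1}$ be the smallest entry of the row of $\tableau$ of index $y$ that exceeds $v_y$, one shows that $\tableau_{\leftsquigarrow m+\nicefrac{1}{2}}(y)$ is the column of the box of $\tableau$ containing $v_{y+1}$, and — the crucial point — that the route takes a left step on passing from row $y$ to row $y+1$, i.e.\ $C_m$ increases there, exactly when the box $(x-1,y+1)$ of $\tableau$ is still empty at the moment the box $(x,y)$ becomes filled, where $x=\tableau_{\leftsquigarrow m+\nicefrac{1}{2}}(y)$ is the current column. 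Using the Markov property of the Plancherel growth process I would organise the evolution $y\mapsto\big(\text{column of the route at row }y,\ \text{local profile of }\tableau\text{ near that box}\big)$ as a Markov chain and establish the key one‑step estimate: for fixed $x$ and $y\to\infty$ the left‑step probability is $\tfrac{x}{y}\,(1+o(1))$, the route moves straight up otherwise, and the probability of a genuine multi‑column jump is $o(\tfrac1y)$. This is a statement about the infinite Plancherel tableau near its corner — concretely about the joint law of the filling times of the neighbouring boxes $(x,y)$ and $(x-1,y+1)$ for $x=O(1)$ and $y\to\infty$ — and its proof (via the lengths of the first few columns of a large Plancherel diagram, which in that regime are $\approx 2\sqrt{\,\cdot\,}$) is the technical heart of the argument; I also need, along the relevant part of the route, that the current column at row $y$ is itself $\approx 2m/y$, in the spirit of the naive hyperbola \eqref{eq:hyperbola} (cf.~\cite{Marciniak2020}), so that $\tfrac{x}{y}\approx\tfrac{2m}{y^2}$.

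Granting these inputs, the conclusion is a law‑of‑rare‑events computation carried out in the projective coordinate $t=2m/y$, which is also where the factor $2$ of the statement appears — ultimately originating in the $2\sqrt{\,\cdot\,}$ asymptotics of the first row of the Plancherel diagram. Over a macroscopic block of rows $\big[2m/t_j,\,2m/t_{j-1}\big)$ the increment $C_m(t_j)-C_m(t_{j-1})$ is, up to the negligible multi‑column corrections, a sum of nearly independent $\{0,1\}$‑valued terms with success probabilities $\approx\tfrac{2m}{y^2}$ whose expectations sum to $\sum_{y\in[2m/t_j,\,2m/t_{j-1})}\tfrac{2m}{y^2}\to t_j-t_{j-1}$; a Stein--Chen (Le Cam) estimate then yields convergence of this increment to a Poisson variable of mean $t_j-t_{j-1}$, and the $o(\tfrac1y)$ bound on multi‑column jumps rules out a compound‑Poisson limit. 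Joint independence of the increments over different blocks follows from the Markov property together with a decorrelation estimate, since the corner configurations of $\tableau$ responsible for left steps in two well‑separated blocks are probed at the macroscopically distinct ``times'' $\approx(2m/t_j)^2/4$ and hence become asymptotically independent. I expect the genuine obstacle to be precisely this control of the infinite Plancherel tableau near its corner — both the sharp one‑step probability $\tfrac{x}{y}(1+o(1))$ and the cross‑scale decorrelation — after which the Poisson approximation and the combinatorial bookkeeping are routine.
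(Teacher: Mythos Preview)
Your outline is precisely the heuristic the paper itself sketches in the discussion surrounding \cref{thm:multiplicative}, and the inputs you call ``the technical heart of the argument'' are stated there as an \emph{open conjecture}, not a lemma. The one-step left-move probability $\tfrac{x}{y}(1+o(1))$ and the $o(\tfrac1y)$ bound on multi-column jumps are essentially the estimates on $\PP\{\tableau_{x-1,y+1}<\tableau_{x,y}\}$ and $\PP\{\tableau_{x-2,y+1}<\tableau_{x,y}\}$ that the authors explicitly leave unproved --- and you would in fact need the harder \emph{conditional} versions, given that the route sits in column $x$ at row $y$, a conditioning that encodes the entire history of the route. Your appeal to $x\approx 2m/y$ along the route, used to convert $\tfrac{x}{y}$ into $\tfrac{2m}{y^2}$, is also problematic: in the regime of interest the current column is a random $O(1)$ quantity, not deterministically $\approx 2m/y$, so the Bernoulli sum you write down has random, history-dependent success probabilities, and neither the ``nearly independent'' structure nor the cross-scale decorrelation is given any mechanism.

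The paper sidesteps all of this by a completely different route. It reparametrizes the bumping route \emph{lazily} as the trajectory of the symbol $\infty$ under successive Schensted insertions (\cref{prop:lazy-traj-correspondence}), which makes the relevant object a genuine Markov chain, the augmented Plancherel growth process. A transpose identity (\cref{lem:transpose}) reduces its law at a fixed large time to the growth of the bottom few rows over a short window, for which approximate independence is available from \cref{thm:independent-explicit}; this yields \cref{prop:distribution-fixed-time}. Transition kernels between two large times are then obtained by expressing $\delta_k$ as a limit of signed linear combinations of Poisson laws (\cref{lem:poisson-binom}, \cref{prop:transition-augmented}), giving the lazy Poisson limit (\cref{thm:lazy-poisson}, \cref{prop:lazy-remark}). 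A final comparison $Y^{[m]}_x\sim 2\sqrt{T^{[m]}_x}$ via column-length asymptotics (\cref{prop:lazy-nonlazy}) removes the laziness. At no stage is the row-by-row transition probability $\tfrac{x}{y}$ established or used.
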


The~proof is postponed to \cref{sec:proof:thm:poisson-point}.

\begin{remark}
    \label{rem:poisson}
The~Poisson point process \cite[Section 4]{Kingman1993}
\begin{equation}
\label{eq:Poisson-point-process} 
\left( 0< \xi_0 < \xi_1 < \cdots \right) 
\end{equation}
on $\R_+$ can~be viewed concretely as the~sequence of partial sums
\begin{align*} 
\xi_0 &= \psi_0,\\
\xi_1 &= \psi_0+\psi_1, \\
\xi_2 &= \psi_0+\psi_1+\psi_2, \\
\vdots
\end{align*}
for a~sequence $(\psi_i)$ of independent, identically distributed random
variables with the~exponential distribution $\operatorname{Exp}(1)$. Thus a
concrete way to express the~convergence in \cref{thm:poisson-point} is to say
that for each $l\in\N_0$ the~joint distribution of the~\emph{finite} tuple of random
variables
 \[    \left( \frac{2 m}{Y^{[m]}_0},\dots, \frac{2 m}{Y^{[m]}_l} \right) \]
converges, as $m\to\infty$, to the~joint distribution of the~sequence of partial sums
 \[ \left( \psi_0,\;\;\; \psi_0+\psi_1,\;\;\; \dots, \;\;\; \psi_0+\psi_1+\cdots+\psi_l \right).\]
\end{remark}

\begin{corollary}
    \label{cor:first-column} 
    For each $x\in\N_0$ the~random variable $\frac{Y_x^{[m]}}{2m}$
    converges in distribution, as $m\to\infty$, to the~reciprocal of the Erlang distribution $\operatorname{Erlang}(x+1,1)$.
\end{corollary}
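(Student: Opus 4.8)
The plan is to deduce the corollary directly from \cref{thm:poisson-point}, or rather from its reformulation in \cref{rem:poisson}, together with the continuous mapping theorem. Fix $x\in\N_0$ and apply \cref{rem:poisson} with $l=x$: its last coordinate, namely $\frac{2m}{Y_x^{[m]}}$, converges in distribution, as $m\to\infty$, to $\psi_0+\psi_1+\cdots+\psi_x$, a sum of $x+1$ independent random variables with the $\operatorname{Exp}(1)$ distribution. By definition, such a sum has the $\operatorname{Erlang}(x+1,1)$ distribution (equivalently, the Gamma distribution with integer shape parameter $x+1$ and unit rate), so
\[ \frac{2m}{Y_x^{[m]}} \xrightarrow{d} \operatorname{Erlang}(x+1,1) \qquad \text{as } m\to\infty. \]

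The remaining step is to pass to reciprocals. By \cref{prop:is-finite} the quantity $\frac{2m}{Y_x^{[m]}}$ is, almost surely, a well-defined element of $(0,\infty]$, equal to $+\infty$ exactly on the event $\bigl\{Y_x^{[m]}=0\bigr\}$. The map $g\colon[0,\infty]\to[0,\infty]$ defined by $g(t)=1/t$, with the conventions $g(0)=\infty$ and $g(\infty)=0$, is a homeomorphism of the two-point compactification of $\R_+$, hence continuous everywhere; applying the continuous mapping theorem to $g$ and to the convergence displayed above gives
\[ \frac{Y_x^{[m]}}{2m} = g\!\left( \frac{2m}{Y_x^{[m]}} \right) \xrightarrow{d} g\bigl(\operatorname{Erlang}(x+1,1)\bigr) \qquad \text{as } m\to\infty. \]
Since $g$ applied to an $\operatorname{Erlang}(x+1,1)$-distributed random variable is, by definition, distributed according to the reciprocal of the $\operatorname{Erlang}(x+1,1)$ distribution, this is exactly the assertion of the corollary.

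There is essentially no obstacle here beyond \cref{thm:poisson-point} itself; the one point worth spelling out is the behaviour at the endpoints $0$ and $\infty$, i.e.\ the fact that $Y_x^{[m]}$ may vanish for small $m$, so that $\frac{2m}{Y_x^{[m]}}$ need not be finite. This is harmless: either one argues as above, using that $g$ is continuous on the whole compactified half-line while the limit law is carried by $(0,\infty)$, or one observes directly that $\PP\bigl(Y_x^{[m]}=0\bigr)\to 0$ (a consequence of the displayed convergence, since the limiting Erlang law gives no mass to arbitrarily large values), restricts to the event $\bigl\{Y_x^{[m]}\ge 1\bigr\}$ of probability tending to $1$, and applies the ordinary continuous mapping theorem to the continuous map $t\mapsto 1/t$ on $(0,\infty)$.
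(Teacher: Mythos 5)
Your proposal is correct and follows the same route the paper intends: \cref{cor:first-column} is treated there as an immediate consequence of \cref{thm:poisson-point} via \cref{rem:poisson} (take $l=x$, note the last coordinate converges to $\psi_0+\cdots+\psi_x\sim\operatorname{Erlang}(x+1,1)$, and pass to reciprocals by the continuous mapping theorem). Your extra care about the event $\bigl\{Y_x^{[m]}=0\bigr\}$ and the endpoint behaviour of $t\mapsto 1/t$ is a valid and welcome tightening of a point the paper leaves implicit.
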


In particular, for $x=0$ it follows that the~random variable
$\frac{Y_0^{[m]}}{2m}$ which measures the~(rescaled) number of steps of the
bumping route to reach the~leftmost column converges in
distribution, as $m\to\infty$, to the~Fréchet distribution of shape parameter
$\alpha=1$:
\begin{equation} 
\label{eq:frechet}
\lim_{m \to\infty} \PP\left( \frac{Y_0^{[m]}}{2m} \leq u \right) =
                       e^{- \frac{1}{u} } \qquad \text{for any $u\in\R_+$.} 
\end{equation}
The~Fréchet distribution has a~heavy tail; in particular its first moment is
infinite which provides a~theoretical explanation for a~bad time complexity
of some of our Monte Carlo simulations.

Equivalently, the~random variable $e^{ -\frac{2m}{Y_0}}$ converges in
distribution, as \mbox{$m\to\infty$}, to the~uniform distribution $U(0,1)$ on the~unit
interval. \cref{fig:cdf} presents the~results of Monte Carlo simulations which
illustrate this result.

\newcommand{\dir}[1]{figures-Poisson/CDF/#1}

\begin{figure}
    \centering
    \begin{tikzpicture}[scale=8]
    \definecolor{Set1-9-1}{RGB}{228,26,28}
    \definecolor{Set1-9-A}{RGB}{228,26,28}
    \definecolor{Set1-9-2}{RGB}{55,126,184}
    \definecolor{Set1-9-B}{RGB}{55,126,184}
    \definecolor{Set1-9-3}{RGB}{77,175,74}
    \definecolor{Set1-9-C}{RGB}{77,175,74}
    \definecolor{Set1-9-4}{RGB}{152,78,163}
    \definecolor{Set1-9-D}{RGB}{152,78,163}
    \definecolor{Set1-9-5}{RGB}{255,127,0}
    \definecolor{Set1-9-E}{RGB}{255,127,0}
    \definecolor{Set1-9-6}{RGB}{255,255,51}
    \definecolor{Set1-9-F}{RGB}{255,255,51}
    \definecolor{Set1-9-7}{RGB}{166,86,40}
    \definecolor{Set1-9-G}{RGB}{166,86,40}
    \definecolor{Set1-9-8}{RGB}{247,129,191}
    \definecolor{Set1-9-H}{RGB}{247,129,191}
    \definecolor{Set1-9-9}{RGB}{153,153,153}
    \definecolor{Set1-9-I}{RGB}{153,153,153}      

    \draw[black!10] (0,0) grid[step=0.1] (1,1);
    
    \draw[Set1-9-1,line width=2.2pt] plot[scale=1]            file {\dir{1.5-CDF-log.txt}};

    \draw[white,double=Set1-9-2,line width=0.3pt,double distance=1.4pt] plot[scale=1]            file {\dir{6.5-CDF-log.txt}};
        
    \draw[white,double=Set1-9-3,line width=0.4pt,double distance=0.4pt] plot[scale=1]            file {\dir{25.5-CDF-log.txt}};

    \draw[dashed] (0,0) -- (1,1);

    \draw[thick,->] (0,0) -- (1.1,0)    node[anchor=north west]{$u$};
    \draw[thick,->] (0,0) -- (0,1.1)    node[anchor=east]{$\mathbb{P}\left(e^{-\frac{2m}{Y_0}}\leq u\right)$};
    
    \foreach \x in {0.2,0.4,0.6,0.8,1}
{ \draw  (\x,0 ) +(0,1pt) -- +(0,-1pt) node[anchor=north] {$ \x  $}; }

\foreach \y in {0.2,0.4,0.6,0.8,1}
{ \draw  (0,\y) +(1pt,0) -- +(-1pt,0) node[anchor=east] {$ \y  $}; }

    \end{tikzpicture}
    
    \caption{Monte Carlo simulations of the cumulative probability distribution
    function for the random variable $e^{-\frac{2m}{Y_0}}$. The~thick red line
    corresponds to $m=1$ (sample size is equal to $10\ 000$); the blue
    line corresponds to $m=6$ (sample size $3\ 000$); the thin green line
    corresponds to $m=25$ (sample size $2\ 500$). The dashed line corresponds to
    the cumulative probability distribution function of the uniform
    distribution $U(0,1)$ on the unit interval. Due to constraints on
    computation time it was not possible to get Monte Carlo data for all values of $u$. 
		The staircase feature of the plots is due to the~discrete nature of the probability distribution of $Y_0$.}
    
    \label{fig:cdf}
\end{figure}
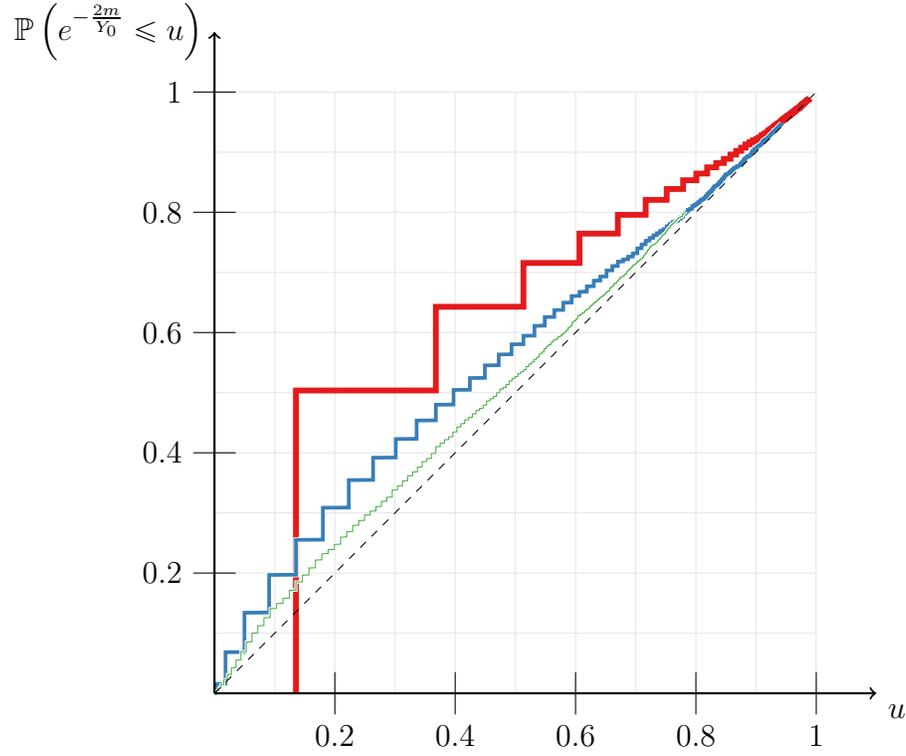

\subsection{Projective convention for drawing Young diagrams}
\label{sec:naive-hyperbola}

Usually in order to draw a~Young diagram we use the~French convention and the
$Oxy$ coordinate system, cf.~\cref{fig:tableauFrench}.
For our purposes it will be more convenient to change the~parametrization of
the~coordinate $y$ by setting
\[ z= z(y)=  \frac{2 m}{y}.\]
This convention allows us to show an~infinite number of
rows of a~given tableau on a~finite piece of paper, 
cf.~\cref{fig:tableauProjection}. We will refer to this
way of drawing Young tableaux as \emph{the~projective convention}; it is somewhat
reminiscent of the~\emph{English convention} in the~sense that the~numbers in
the~tableau increase along the~columns \emph{from top to bottom}.

\medskip

\newcommand{\dir}[1]{figures-Poisson/multiple_bumping_routes/#1}

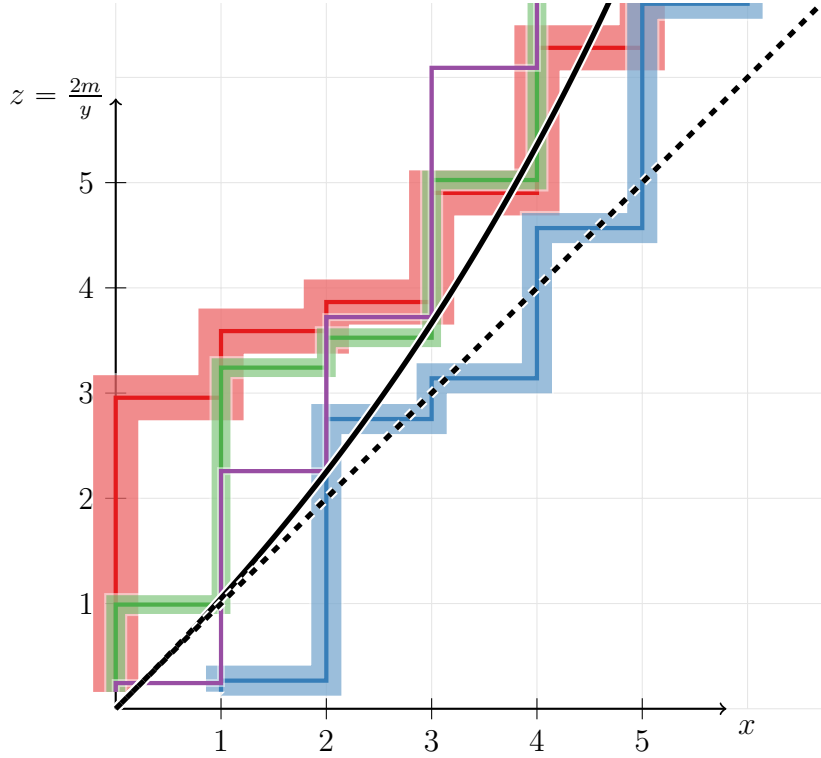
\begin{figure}
\centering
\begin{tikzpicture}[scale=1.4]
\definecolor{Set1-9-1}{RGB}{228,26,28}
\definecolor{Set1-9-A}{RGB}{228,26,28}
\definecolor{Set1-9-2}{RGB}{55,126,184}
\definecolor{Set1-9-B}{RGB}{55,126,184}
\definecolor{Set1-9-3}{RGB}{77,175,74}
\definecolor{Set1-9-C}{RGB}{77,175,74}
\definecolor{Set1-9-4}{RGB}{152,78,163}
\definecolor{Set1-9-D}{RGB}{152,78,163}
\definecolor{Set1-9-5}{RGB}{255,127,0}
\definecolor{Set1-9-E}{RGB}{255,127,0}
\definecolor{Set1-9-6}{RGB}{255,255,51}
\definecolor{Set1-9-F}{RGB}{255,255,51}
\definecolor{Set1-9-7}{RGB}{166,86,40}
\definecolor{Set1-9-G}{RGB}{166,86,40}
\definecolor{Set1-9-8}{RGB}{247,129,191}
\definecolor{Set1-9-H}{RGB}{247,129,191}
\definecolor{Set1-9-9}{RGB}{153,153,153}
\definecolor{Set1-9-I}{RGB}{153,153,153}      
    \clip (-1,-1) rectangle (6.7,6.7);
    \draw[black!10] (0,0) grid (30,10);

    \draw[thick,->] (0,0) -- (5.8,0) node[anchor=north west]{$x$};
    \draw[thick,->] (0,0) -- (0,5.8) node[anchor=east]{$z=\frac{2m}{y}$};
    \draw (1,0) +(0,0.1) -- +(0,-0.1) node[anchor=north]{$1$};
    \draw (2,0) +(0,0.1) -- +(0,-0.1) node[anchor=north]{$2$};
    \draw (3,0) +(0,0.1) -- +(0,-0.1) node[anchor=north]{$3$};
    \draw (4,0) +(0,0.1) -- +(0,-0.1) node[anchor=north]{$4$};
    \draw (5,0) +(0,0.1) -- +(0,-0.1) node[anchor=north]{$5$};
    \draw (0,1) +(0.1,0) -- +(-0.1,0) node[anchor=east]{$1$};
    \draw (0,2) +(0.1,0) -- +(-0.1,0) node[anchor=east]{$2$};
    \draw (0,3) +(0.1,0) -- +(-0.1,0) node[anchor=east]{$3$};
    \draw (0,4) +(0.1,0) -- +(-0.1,0) node[anchor=east]{$4$};
    \draw (0,5) +(0.1,0) -- +(-0.1,0) node[anchor=east]{$5$};

\draw[white,thick,double distance=6mm,double=Set1-9-1,opacity=0.5] plot[scale=1] file {\dir{100.5-projective-A.txt}};
\draw[ultra thick,Set1-9-1] plot[scale=1] file {\dir{100.5-projective-A.txt}};

\draw[white,thick,double distance=4mm,double=Set1-9-2,opacity=0.5] plot[scale=1] file {\dir{100.5-projective-B.txt}};    
\draw[ultra thick,Set1-9-2] plot[scale=1] file {\dir{100.5-projective-B.txt}};    

\draw[white,thick,double distance=2.5mm,double=Set1-9-3,opacity=0.5] plot[scale=1] file {\dir{100.5-projective-C.txt}};    
\draw[ultra thick,Set1-9-3] plot[scale=1] file {\dir{100.5-projective-C.txt}};    

\draw[white,thick,double distance=0.5mm,double=Set1-9-4,opacity=0.5] plot[scale=1] file {\dir{100.5-projective-D.txt}};   
\draw[ultra thick,Set1-9-4] plot[scale=1] file {\dir{100.5-projective-D.txt}};

\draw[white,thick,double distance=2pt,double=black] plot[smooth,scale=sqrt(100.5)] file {\dir{bumping-projective.txt}};

\draw[white,thick,double distance=2pt,double=black,dashed] 
(0,0) -- (10,10);

\end{tikzpicture}

\caption{The bumping routes from \cref{fig:simulated-linear} shown in 
		the~projective convention (see also \cref{fig:tableauProjection}). 
		The dashed line $x=z$ corresponds to the hyperbola \eqref{eq:hyperbola};
    it is tangent in $0$ to the rescaled limit bumping curve (the solid line); it is also
    the~plot of the mean value of the Poisson process $z\mapsto \mathbb{E} N(z)$.}
\label{fig:projective}

\end{figure}

In the~projective convention the~bumping route can~be seen as the~plot  of the~function
\begin{equation}
    \label{eq:projective-coordinates}
     x^{\projective}_{\tableau,m}(z)= 
     \tableau_{\leftsquigarrow m+\nicefrac{1}{2}}\left(\left\lfloor \frac{2m}{z} \right\rfloor\right)
\qquad \text{for } z\in\R_+ 
\end{equation}
shown on \cref{fig:tableauProjection} as the~thick red line.

\medskip

With these notations \cref{thm:poisson-point} allows the~following convenient
reformulation.

\begin{theorem}[The~main result, reformulated]
    \label{thm:main-poisson} 
    
Let $\tableau$ be a~random infinite standard Young
tableau with the~Plancherel distribution. For $m\to\infty$ the~stochastic
process 
\begin{equation}\label{eq:projective-process}
    \left\{ x^{\projective}_{\tableau,m}(z),\; z>0\right\} 
\end{equation}
converges
in distribution to the~standard Poisson counting process \mbox{$\{ N(z),\; z>0 \}$} 
with the~unit intensity.
\end{theorem}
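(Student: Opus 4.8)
The plan is to derive \cref{thm:main-poisson} directly from \cref{thm:poisson-point}, treating it essentially as a translation between the ``point process'' and ``counting process'' points of view rather than as an independent theorem. The two objects are related by the standard bijection between a locally finite increasing sequence of points in $\R_+$ and its counting function: if the random set $\left( \frac{2m}{Y^{[m]}_0}, \frac{2m}{Y^{[m]}_1}, \dots \right)$ is written as an increasing sequence of atoms, then the associated counting function evaluated at $z$ counts how many of the $\frac{2m}{Y^{[m]}_x}$ are $\le z$, which by the defining monotonicity $Y^{[m]}_0 \ge Y^{[m]}_1 \ge \cdots$ (recalled in \cref{sec:in-which-row}) equals $1 + \max\{ x : \frac{2m}{Y^{[m]}_x} \le z \}$.

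First I would check the pointwise identity that matches this combinatorial description with the definition \eqref{eq:projective-coordinates}: namely that for every $z>0$,
\[
x^{\projective}_{\tableau,m}(z) = \#\left\{ x\in\N_0 : \frac{2m}{Y^{[m]}_x} \le z \right\}.
\]
Indeed, $x^{\projective}_{\tableau,m}(z) = \tableau_{\leftsquigarrow m+\nicefrac12}\!\left(\lfloor 2m/z\rfloor\right)$ is, by \eqref{eq:Y}, precisely the largest $x$ such that $Y^{[m]}_x \le \lfloor 2m/z\rfloor$, i.e.\ such that $Y^{[m]}_x \le 2m/z$, i.e.\ such that $\frac{2m}{Y^{[m]}_x}\le z$, plus one (and it is $0$ when no such $x$ exists); here I must be slightly careful about the floor and about the row in which the bumping route terminates, but these are boundary issues that do not affect the limit. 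So the stochastic process in \eqref{eq:projective-process} is literally the counting process of the random set in \cref{thm:poisson-point}.

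Then the convergence follows from the continuous-mapping-type principle that, for point processes on $\R_+$, convergence in distribution (in the sense made explicit in \cref{rem:poisson}, i.e.\ convergence of the finite-dimensional distributions of the vectors of partial sums) is equivalent to convergence in distribution of the associated counting processes in the Skorokhod space $D[0,\infty)$ (or, more elementarily, convergence of all finite-dimensional distributions $\big(x^{\projective}_{\tableau,m}(z_1),\dots,x^{\projective}_{\tableau,m}(z_k)\big)$ together with the fact that the limiting Poisson process has no fixed jump points). Concretely: for fixed $z_1<\cdots<z_k$, the event $\{x^{\projective}_{\tableau,m}(z_j) \ge l_j \text{ for all } j\}$ is, via the identity above, expressible in terms of finitely many of the coordinates $\frac{2m}{Y^{[m]}_x}$ lying below the $z_j$, hence in terms of the finite tuple to which \cref{thm:poisson-point} (via \cref{rem:poisson}) already grants convergence; the limit is exactly the corresponding probability for the partial-sum description of the unit-intensity Poisson process, which is $N(z)$.

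The main obstacle is not any deep estimate — all the probabilistic content is already in \cref{thm:poisson-point} and \cref{prop:is-finite} (the latter guaranteeing that $x^{\projective}_{\tableau,m}(z)$ is almost surely finite for each $z$, so the counting process is well defined). Rather, the work is in pinning down the precise meaning of ``converges in distribution to the standard Poisson counting process'' for a process indexed by $z>0$ whose sample paths are nondecreasing integer-valued right-continuous step functions, and verifying that the finite-dimensional-distribution convergence supplied by \cref{rem:poisson} is the right notion (equivalently, invoking that weak convergence of point processes is characterized by convergence of all finite-dimensional vectors $(N_m(z_1),\dots,N_m(z_k))$ when the limit is a simple point process with a diffuse intensity, so that there are a.s.\ no ties at the $z_j$'s). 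I would therefore phrase the proof as: (i) establish the deterministic identity between $x^{\projective}_{\tableau,m}(\cdot)$ and the counting function of the random set in \cref{thm:poisson-point}; (ii) quote the equivalence of the two modes of convergence for point processes on $\R_+$; (iii) conclude.
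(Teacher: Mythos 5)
Your proposal is correct and takes essentially the same route as the paper: the paper's own proof also reduces \cref{thm:main-poisson} to \cref{thm:poisson-point} by observing that $x^{\projective}_{\tableau,m}$ is exactly the counting process attached to the random set of \cref{thm:poisson-point} (the paper phrases this through the jump times $\inf\{z:\, x^{\projective}_{\tableau,m}(z)=k\}=2m/Y^{[m]}_{k-1}$, you through the counting identity), and then transfers the finite-dimensional convergence. One small slip in your chain of equivalences: $Y^{[m]}_x\le 2m/z$ is equivalent to $2m/Y^{[m]}_x\ge z$ (not $\le z$), and the identity one actually gets is $x^{\projective}_{\tableau,m}(z)=\#\big\{x:\, 2m/Y^{[m]}_x< z\big\}$, but, as you note yourself, such strict-versus-weak boundary discrepancies are immaterial since the limiting Poisson process has no fixed atoms.
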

For an~illustration see \cref{fig:projective}.

\begin{remark}
In \cref{thm:main-poisson} above, the~convergence in distribution for 
stochastic processes is understood as follows: for any \emph{finite} collection
$z_1,\dots,z_l>0$ we claim that the~joint distribution of the~tuple of random
variables
\[ \left( x^{\projective}_{\tableau,m}(z_1), \dots,
 x^{\projective}_{\tableau,m}(z_l)
\right)\]
converges in the~weak topology of probability measures, as $m\to\infty$, to 
the~joint distribution of the~tuple of random variables
\[ \big( N(z_1), \dots, N(z_l) \big).\]
\end{remark}

\begin{proof}[Proof of \cref{thm:main-poisson}]
    The process \eqref{eq:projective-process} is a counting process. 
    By the definition \eqref{eq:projective-coordinates}, the~time of its $k$-th jump (for an integer $k\geq 1$)
    \[ \inf\left\{ z>0:\; x^{\projective}_{\tableau,m}(z) = k \right\} = \frac{2m}{Y_{k-1}^{[m]}} \]
    is directly related to the number of the row in which the bumping route
    reaches the column with the index $k-1$. By \cref{thm:poisson-point} the joint distribution of the times of the jumps converges to the Poisson point process;
    it follows therefore that \eqref{eq:projective-process} converges to the Poisson counting process, as required.
\end{proof}

The~plot of the~mean~value of the~standard Poisson process $ z \mapsto
\mathbb{E} N(z) $ is the~straight line $x=z$ which is shown on
\cref{fig:projective} as the~dashed line. Somewhat surprisingly it coincides
with the~hyperbola~\eqref{eq:hyperbola} shown in the~projective coordinate
system; a~posteriori this gives some justification to the~naive discussion from
\cref{sec:naive}.

\subsection{The~main result with the~right-to-left approach}

\cref{thm:poisson-point} was formulated in a~compact way which may
obscure the~true nature of this result. Our criticism is focused on the
\emph{left-to-right approach} from \cref{rem:poisson} which might give a~false
impression that the~underlying mechanism for generating 
the~random variable~$\frac{2m}{Y_{x+1}^{[m]}}$ 
describing the~`time of arrival' of the~bumping
route to the~column number $x+1$ is based on generating first the~random
variable~$\frac{2m}{Y_x^{[m]}}$ related to the~previous column (that is 
the~column directly to the~left), and adding some `waiting time' for the
transition. In fact, such a~mechanism is not possible without the time travel
because the~chronological order of the events is opposite: the~bumping route first
visits the~column $x+1$ and \emph{then} lands in the~column $x$. In the
following we shall present an~alternative, \emph{right-to-left} viewpoint which
explains better the~true nature of \cref{thm:poisson-point}.

\medskip

For the~Poisson point process \eqref{eq:Poisson-point-process} and an~integer
$l\geq 1$ we consider the~collection of random variables
\begin{equation} 
\label{eq:ratios}
\xi_l, R_0, R_1, \dots, R_{l-1}
\end{equation}
which consists of $\xi_l$ and the~ratios  
\[ R_i := \frac{\xi_{i+1}}{\xi_i}\]
of consecutive entries of $(\xi_i)$.
Then \eqref{eq:ratios} are independent random variables with the~distributions
that can~be found easily. This observation can~be used to \emph{define}
$\xi_0,\dots,\xi_l$ from the~Poisson point process by setting
\[ \xi_i = \xi_l\  \frac{1}{R_{l-1}} \frac{1}{R_{l-2}} \cdots \frac{1}{R_i} 
                                      \qquad \text{for } 0\leq i \leq l.\] 
With this in mind we may reformulate \cref{thm:poisson-point} as follows.

\begin{theorem}[The~main result, reformulated]
    \label{thm:multiplicative} For any integer $l\geq 0$ the~joint distribution
of the~tuple of random variables
\begin{align}
\label{eq:chrono-plus}
& \bigg( \frac{Y_l^{[m]}}{2m}, & \hspace*{2\arraycolsep} &\frac{Y_{l-1}^{[m]}}{2m}, &\hspace*{2\arraycolsep}
 &  \frac{Y_{l-2}^{[m]}}{2m},  & \hspace*{2\arraycolsep} & \dots, & \hspace*{2\arraycolsep} & 
\frac{Y_{0}^{[m]}}{2m} \bigg) \\ 
\intertext{converges, as $m\to\infty$, to the~joint distribution of the~random variables}
\label{eq:funny-products}
& \bigg( \frac{1}{\xi_l}, & & \frac{1}{\xi_l} R_{l-1}, & & \frac{1}{\xi_l} R_{l-1} R_{l-2}, & &
\dots,  & & \frac{1}{\xi_l} R_{l-1}  \cdots R_0 \bigg),
\end{align}
where $\xi_l,R_{l-1},\dots,R_0$ are independent random variables, the
distribution of $\xi_l$ is equal to $\operatorname{Erlang}(l+1,1)$, 
and for each
$i\geq 0$ the~distribution of the~ratio $R_i$ is supported on $[1,\infty)$ with
the~power law
\begin{equation} 
\label{eq:power-law}
\PP(R_i > u) = \frac{1}{u^{i+1}} \qquad \text{for } u\geq 1.
\end{equation}
\end{theorem}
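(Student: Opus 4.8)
The plan is to deduce \cref{thm:multiplicative} from \cref{thm:poisson-point} by an elementary change of variables, adding no probabilistic input beyond the structure of the Poisson point process. Throughout, let $0<\xi_0<\xi_1<\cdots$ denote the unit-intensity Poisson point process \eqref{eq:Poisson-point-process}, realised as in \cref{rem:poisson} as the sequence of partial sums $\xi_i=\psi_0+\cdots+\psi_i$ of i.i.d.\ $\operatorname{Exp}(1)$ random variables $\psi_0,\psi_1,\dots$, and put $R_i:=\xi_{i+1}/\xi_i$ for $i\geq 0$. By \cref{prop:is-finite} the quantities $Y_x^{[m]}$ are almost surely finite, so for all $m$ large enough the entries of \eqref{eq:chrono-plus} are genuine positive random variables.

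First I would reduce the statement to a purely distributional claim about the reciprocals of the Poisson points. By \cref{thm:poisson-point}, in the finite-dimensional form spelled out in \cref{rem:poisson}, the tuple $\bigl(\tfrac{2m}{Y_0^{[m]}},\dots,\tfrac{2m}{Y_l^{[m]}}\bigr)$ converges in distribution, as $m\to\infty$, to $(\xi_0,\dots,\xi_l)$. The map $\Phi\colon(0,\infty)^{l+1}\to\R^{l+1}$ given by $\Phi(a_0,\dots,a_l)=\bigl(\tfrac1{a_l},\tfrac1{a_{l-1}},\dots,\tfrac1{a_0}\bigr)$ --- coordinatewise reciprocal followed by reversal of the coordinate order --- is continuous on all of $(0,\infty)^{l+1}$, and the limiting point $(\xi_0,\dots,\xi_l)$ lies in $(0,\infty)^{l+1}$ almost surely since the points of the process are strictly positive. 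Hence the continuous mapping theorem applies, and since $\Phi\bigl(\tfrac{2m}{Y_0^{[m]}},\dots,\tfrac{2m}{Y_l^{[m]}}\bigr)$ is exactly the tuple \eqref{eq:chrono-plus}, we obtain that \eqref{eq:chrono-plus} converges in distribution to $\bigl(\tfrac1{\xi_l},\tfrac1{\xi_{l-1}},\dots,\tfrac1{\xi_0}\bigr)$. Now the telescoping identity $R_{l-1}R_{l-2}\cdots R_i=\xi_l/\xi_i$ shows that this limit equals, as a tuple of random variables, $\bigl(\tfrac1{\xi_l},\ \tfrac1{\xi_l}R_{l-1},\ \dots,\ \tfrac1{\xi_l}R_{l-1}\cdots R_0\bigr)$, so the theorem reduces to showing that $\xi_l,R_{l-1},\dots,R_0$ are independent, that $\xi_l$ has the $\operatorname{Erlang}(l+1,1)$ distribution (which is immediate, since $\xi_l=\psi_0+\cdots+\psi_l$), and that each $R_i$ satisfies the power law \eqref{eq:power-law}.

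For this last point I would argue by induction on $l$ via the beta--gamma algebra. Since $\xi_i\sim\operatorname{Gamma}(i+1)$ and $\psi_{i+1}\sim\operatorname{Gamma}(1)$ are independent, the ratio $\xi_i/\xi_{i+1}=\xi_i/(\xi_i+\psi_{i+1})$ has the $\operatorname{Beta}(i+1,1)$ distribution and is independent of $\xi_{i+1}=\xi_i+\psi_{i+1}$; the $\operatorname{Beta}(i+1,1)$ distribution has cumulative distribution function $x\mapsto x^{i+1}$ on $(0,1)$, and passing to the reciprocal $R_i=\xi_{i+1}/\xi_i$ yields $\PP(R_i>u)=u^{-(i+1)}$ for $u\geq 1$, which is \eqref{eq:power-law}. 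For the independence, the inductive hypothesis gives that $\xi_{l-1},R_{l-2},\dots,R_0$ are mutually independent, and $\psi_l$ is independent of $\psi_0,\dots,\psi_{l-1}$, hence of all of them; applying the beta--gamma splitting to the pair $(\xi_{l-1},\psi_l)\mapsto(\xi_l,R_{l-1})$, which involves none of $R_{l-2},\dots,R_0$, shows that $(\xi_l,R_{l-1})$ is independent of $(R_{l-2},\dots,R_0)$ and that $\xi_l$ and $R_{l-1}$ are independent of each other, closing the induction. Alternatively one can compute directly: the joint density of $(\xi_0,\dots,\xi_l)$ is $e^{-t_l}$ on the region $0<t_0<\cdots<t_l$, and the substitution $(\xi_0,\dots,\xi_l)\leftrightarrow(\xi_l,R_0,\dots,R_{l-1})$, whose Jacobian is cleanest in logarithmic coordinates, turns it into a density proportional to $e^{-s}\,s^{l}\prod_{i=0}^{l-1}r_i^{-(i+2)}$ on $\{s>0,\ r_0,\dots,r_{l-1}>1\}$, which factorises into the required marginals.

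I do not expect a genuinely hard step: the reduction via the continuous mapping theorem is a one-line matter, and the remainder is a routine reparametrisation of the first finitely many points of a Poisson process. The only things deserving some care are the justification that the continuous mapping theorem truly applies --- which is precisely the almost sure positivity of the Poisson points together with the finiteness of the $Y_x^{[m]}$ furnished by \cref{prop:is-finite} --- and bookkeeping of the exponents in the beta--gamma (or Jacobian) computation so that they match \eqref{eq:power-law} exactly.
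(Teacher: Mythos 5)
Your argument is correct and follows essentially the same route as the paper, which presents \cref{thm:multiplicative} as a direct reformulation of \cref{thm:poisson-point}: pass to reciprocals in reversed order, write $1/\xi_i=\tfrac{1}{\xi_l}R_{l-1}\cdots R_i$, and invoke the independence of $\xi_l,R_{l-1},\dots,R_0$ together with their explicit laws. The only difference is that you spell out the details the paper leaves as ``easily found'' (the continuous-mapping step and the beta--gamma/Jacobian computation), and these details check out, including the density $e^{-s}s^{l}\prod_{i}r_i^{-(i+2)}$.
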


The~order of the~random variables in \eqref{eq:chrono-plus} reflects the
chronological order of the~events, from left to right. Heuristically,
\eqref{eq:funny-products} states that the~transition of the~bumping route from
the column $x+1$ to the column $x$ gives a~\emph{multiplicative} factor~$R_x$ to 
the~total waiting time, with the~factors $R_0,R_1,\dots$ independent.

It is more common in mathematical and physical models that the~total waiting
time for some event arises as a~\emph{sum} of some independent summands, so the
\emph{multiplicative} structure in \cref{thm:multiplicative} comes as a~small
surprise. We believe that this phenomenon can~be explained heuristically as
follows: when we study the~transition of the~bumping route from row $y$ to the
next row $y+1$, the~probability of the~transition from column $x+1$ to column
$x$ seems asymptotically to be equal to
\[ \frac{x+1}{y}+o\left(\frac{1}{y}\right) 
\qquad\text{for fixed value of $x$, and for }y\to\infty.\] 
This kind of decay would explain both the~multiplicative structure (`if 
a~bumping route arrives to a~given column very late, it will stay in this column
even longer') as well as the~power law \eqref{eq:power-law}. We are tempted
therefore to state the~following conjecture which might explain the
aforementioned transition probabilities of the~bumping routes.
\begin{conjecture}
For a~Plancherel-distributed random infinite standard Young tableau $\tableau$ 
\begin{align*}
\PP\left\{ \tableau_{x-1,y+1} < \tableau_{x,y} \right\} &= \frac{x}{y} + o\left(\frac{1}{y}\right) & 
     & \text{for fixed $x\geq 1$ and $y\to\infty$}, 
\\
\PP\left\{ \tableau_{x-2,y+1} < \tableau_{x,y} \right\} &=  o\left(\frac{1}{y}\right) & 
        & \text{for fixed $x\geq 2$ and $y\to\infty$}.  
\end{align*}

Furthermore, for each $x\in\{1,2,\dots\}$ 
the~set of points
\begin{equation} 
\label{eq:mysterious-set}
\left\{ \log \frac{y}{c} : y\in\{c,c+1,\dots\} \text{ and } \tableau_{x-1,y+1} < \tableau_{x,y}  \right\}
\end{equation}
converges, as $c\to\infty$, to Poisson point process on $\R_+$ with the
constant intensity equal to $x$.
\end{conjecture}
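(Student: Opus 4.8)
Here is a plan for proving the conjecture.

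\emph{Reformulation and localisation.} In any standard Young tableau the entry of a box is the step of~\eqref{eq:PGP0} at which that box is created, so the relative order of $\tableau_{x-1,y+1}$ and $\tableau_{x,y}$ records only which of these two boxes is created first; equivalently, whether at the step at which column~$x$ reaches height~$y+1$ the column $x-1$ has height exactly $y+1$ or at least $y+2$. Write $E_{x,y}$ for the exceptional alternative---the box $(x,y)$ created first, i.e.\ column $x-1$ sitting at height $y+1$ at that step---which is the rare event governing the point process. Since the Plancherel growth process is a Markov chain, I would stop it at the first time~$T$ at which $(x,y)$ becomes an addable corner and use the strong Markov property; it then suffices to control (i)~the conditional law of the heights of the columns $0,1,\dots,x$ of~$\lambda^{(T)}$, and (ii)~the probability that, run from~$\lambda^{(T)}$, the box $(x,y)$ is filled before the box $(x-1,y+1)$ (and, for the second estimate, before $(x-2,y+1)$). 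The one-step probabilities of filling these boxes are given by the hook-length formula and are all of order $1/y$, so the finite competition in~(ii) only contributes a bounded factor; the whole order of magnitude comes from~(i), i.e.\ from the probability that the heights of columns $x-1$ and~$x$ at time~$T$ differ by exactly~$1$.

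\emph{Edge asymptotics and the constant.} The box $(x,y)$ is created when the diagram has $n\approx y^{2}/4$ boxes, so at time~$T$ the heights of the leftmost columns, centred by $2\sqrt n$ and rescaled by $n^{1/6}$, are close to the Airy point process (see e.g.~\cite{Romik2015a}). Asking that columns $x-1$ and~$x$ differ in height by exactly~$1$ forces two consecutive points of that process to sit within distance $\asymp n^{-1/6}$; since the spacing density of the Airy kernel vanishes like $s^{2}$ at $s=0$, this has probability $\asymp (n^{-1/6})^{3}=n^{-1/2}\asymp 1/y$, while asking three columns to be within height~$1$ of one another costs an extra power and is hence $o(1/y)$, which settles the second estimate. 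The delicate point is that the constant in front of $1/y$ should equal exactly~$x$; heuristically this reflects the local density of the $x$-th column near the edge---the $x$-th gap of the Airy point process is smaller than~$\delta$ with probability $\sim c\,x\,\delta^{3}$ as $\delta\to0$---but turning this into the exact value, matched against the combinatorial one-step weights from~(ii), is where I expect the main difficulty to lie. This seems to require a quantitative input, e.g.\ an asymptotic analysis of the discrete Bessel kernel of the Poissonized Plancherel measure near the edge, or an induction on~$x$ exploiting the self-similarity of the picture already visible in~\cref{thm:multiplicative}.

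\emph{The Poisson point process.} For the last assertion I would fix $0<t_{1}<t_{2}$ and examine the indicators of $E_{x,y}$ for $y\in[ce^{t_{1}},ce^{t_{2}}]$; by the first part their expected number converges to $x(t_{2}-t_{1})$. It remains to prove that $E_{x,y}$ and $E_{x,y'}$ become asymptotically independent whenever $y/y'$ stays bounded away from~$1$: these events involve the growth process at times of genuinely different order, $\approx y^{2}/4$ and $\approx (y')^{2}/4$, so the Markov property together with the fast decay of correlations of the Plancherel measure across well-separated scales should yield convergence of all factorial moments to their Poisson values, whence a Chen--Stein bound gives convergence to the Poisson point process of intensity~$x$. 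This part is technical but routine once the first part is in hand, and it is consistent with \cref{thm:poisson-point,thm:multiplicative}: a bumping route sitting in column~$x$ leaves it at the first row~$y$ for which $E_{x,y}$ occurs, so the power law $\PP(R_{x-1}>u)=u^{-x}$ there is exactly the probability $e^{-x\log u}$ that a rate-$x$ Poisson process has no point in a logarithmic interval of length $\log u$.
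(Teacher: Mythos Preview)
The statement you are attacking is a \emph{conjecture} in the paper; the authors do not prove it and in fact write that ``numerical experiments are not conclusive and indicate interesting clustering phenomena'' for the set~\eqref{eq:mysterious-set}. So there is no paper's proof to compare against, and your proposal should be read as a strategy toward an open problem.

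A small but real point first: your identification of the rare alternative is correct (columns $x-1$ and $x$ at equal height when box $(x,y)$ is filled), but note that this is the event $\tableau_{x-1,y+1}>\tableau_{x,y}$, i.e.\ box $(x,y)$ created \emph{first}; the inequality as printed in the paper appears to be reversed. You silently work with the right event, but it would be worth flagging the discrepancy.

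The Airy heuristic for the order of magnitude is reasonable as a heuristic, and you are candid that pinning the constant to exactly~$x$ is where the difficulty lies. There is an additional subtlety you do not mention: you are not sampling a Plancherel diagram at a deterministic time but at the random stopping time when column~$x$ has just grown, so the gap law you feed into the $s^2$ spacing estimate is biased by that conditioning. This would need to be controlled before the $n^{-1/2}\asymp 1/y$ computation is rigorous even at the level of order of magnitude.

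The serious gap is in the Poisson argument. Your plan assumes that $E_{x,y}$ and $E_{x,y'}$ decorrelate when $y/y'$ is bounded away from~$1$, and then invokes Chen--Stein. But Chen--Stein requires control over \emph{all} pairs, and for adjacent~$y$ the correlation is not small: conditionally on $E_{x,y}$ the columns $x-1$ and $x$ are tied, column $x$ is blocked until column $x-1$ advances once, and then the two columns compete at asymptotically equal rates $\sim 1/\sqrt{n}$ (this is exactly the content of \cref{prop:asymptotic-probability} after transposition). Hence $\PP\!\big(E_{x,y+1}\mid E_{x,y}\big)$ is bounded away from~$0$, not $O(1/y)$. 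The events therefore occur in geometric clusters, the contribution of near-diagonal pairs to the second factorial moment is of the same order as the mean, and the limit---if it exists---should be compound Poisson rather than Poisson. This is precisely the ``clustering phenomena'' the authors flag, and it means your independence-plus-Chen--Stein route cannot succeed as written; it may even be evidence that the Poisson part of the conjecture is false in its stated form.
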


Numerical experiments are not conclusive and indicate interesting clustering
phenomena~for the~random set \eqref{eq:mysterious-set}.

\subsection{Asymptotics of fixed $m$}

The~previous results concerned the~bumping routes $\tableau \leftsquigarrow
m+\frac{1}{2}$ in the~limit $m\to\infty$ as the~inserted number tends to
infinity. In the~following we concentrate on another class of asymptotic
problems which concern the~fixed value of $m$.

The~following result shows that \eqref{eq:frechet} gives asymptotically a~very
good approximation for the~distribution tail of $Y_0^{[m]}$ in the
scaling when $m$ is fixed and the~number of the~row $y\to\infty$ tends to
infinity.

\begin{proposition}
\label{prop:m=1}
For each integer $m\geq 1$
\[ \lim_{y\to\infty} y\  \PP\left\{  Y_0^{[m]} \geq y\right\} = 2m. \]
\end{proposition}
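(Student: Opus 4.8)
The plan is to first translate the tail event $\{Y_0^{[m]}\geq y\}$ into a concrete statement about the Plancherel growth process $(\lambda^{(k)})_{k\in\N_0}$ attached to $\tableau$, and then to extract the constant $2m$ from an asymptotic analysis of that process near the lower-left corner of the diagram.

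For the reduction, recall that the $x$-coordinate of the infinite bumping route $\tableau\leftsquigarrow m+\nicefrac{1}{2}$ is weakly decreasing and stays in the columns $0,1,\dots,m$, so that $\{Y_0^{[m]}\geq y\}$ is exactly the event that this route lies in a column of index $\geq 1$ at each of the rows $0,1,\dots,y-1$. Unwinding the definition of Schensted insertion, the route starts in row $0$ at the column $c_0=\lambda^{(m)}_0$, and inductively the value bumped out of row $i$ enters row $i+1$ at the column $c_{i+1}=\lambda^{(\beta_i-1)}_{i+1}$, where (with the convention $\beta_{-1}:=m+1$) $\beta_i$ denotes the first step not preceding $\beta_{i-1}$ at which row $i$ of the growing diagram is extended; the route drops to column $0$ at the first row $i$ with $c_i=0$. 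Hence
\[
\bigl\{Y_0^{[m]}\geq y\bigr\}
= \bigl\{\, \lambda^{(\beta_{i-1}-1)}\text{ has at least }i+1\text{ rows, for every }i=1,\dots,y-1 \,\bigr\}.
\]
This is the extreme-tail counterpart of the ``balanced scaling'' picture of Romik and \Sniady{} \cite{RomikSniadyBumping}: there the route at row $y=O(\sqrt m)$ follows the limit bumping curve, whereas here $y\gg\sqrt m$ and the route essentially rides the left edge of the diagram.

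For the asymptotics, I would estimate $\PP\{Y_0^{[m]}\geq y\}$ by decomposing over the exact row $r\geq y$ at which the route reaches column $0$, reducing the claim to showing $\PP\{Y_0^{[m]}=r\}=\frac{2m}{r^2}+o\!\left(\frac{1}{r^2}\right)$ as $r\to\infty$ and then summing. The event $\{Y_0^{[m]}=r\}$ says that the diagram has been extended ``on schedule'' in each of the rows $0,\dots,r-1$, so that the route survives inside the strip of the first $m$ columns, but that row $r$ has not yet appeared at the relevant step $\beta_{r-1}$; for large $r$ the dominant mechanism is that the diagram has just barely reached $r$ rows --- and not more --- by the time the route arrives. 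Making this quantitative requires sharp local information on the growth process at the edge: the number of rows of $\lambda^{(k)}$ is $2\sqrt{k}\,(1+o(1))$, and one needs the corresponding corner transition probabilities --- roughly, the chance that a typical diagram with $\approx 2\sqrt{k}$ rows gains a new row at step $k+1$ --- together with the law of the route's column $c_i\in\{1,\dots,m\}$ inside the strip. Heuristically the factor $2$ arises from the $2\sqrt{\cdot\,}$ edge of the Vershik--Kerov--Logan--Shepp limit shape and the factor $m$ from the $m$ available columns.

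The hard part is precisely this corner analysis over a \emph{growing} number of rows, well outside the regime of \cite{RomikSniadyBumping}: one must control the joint trajectory $(c_i,\beta_i)_i$ uniformly enough to isolate the leading constant rather than merely the order of magnitude. I would treat the typical behaviour by coupling $(\beta_i)_i$ with a renewal-type process driven by the known edge fluctuations of Plancherel diagrams (via hook-length-formula moment estimates for short row lengths, or Baik--Deift--Johansson-type bounds), and dispose of the atypical trajectories --- in particular those on which $\lambda^{(k)}$ has anomalously many rows --- by cruder large-deviation estimates, so that their total contribution is $o(1/y)$; some of the estimates developed for \cref{thm:poisson-point} should be reusable. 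The case $m=1$, for which the event reduces to the transparent requirement that the first column of the growing diagram stay two boxes ahead of the second column at every step preceding an extension of the second column, is a natural warm-up that already exhibits the constant $2$.
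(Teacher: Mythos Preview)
Your reduction of $\{Y_0^{[m]}\geq y\}$ to a statement about the sequence $(\beta_i)$ is reasonable, but the subsequent plan has a genuine gap that you yourself flag and do not close: you propose to control the joint trajectory $(c_i,\beta_i)$ over \emph{all} $y$ rows simultaneously, with $y\to\infty$, and to extract the exact constant $2m$ from this. The dependencies along this trajectory are delicate, and the tools you list (hook-length moment estimates, BDJ-type bounds, large deviations for anomalous row counts) give the right order of magnitude but do not obviously isolate the constant; ``coupling with a renewal-type process'' is a hope rather than a mechanism.

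The paper sidesteps this $y$-step analysis entirely by a change of variables that collapses it to an $m$-step problem. First, it passes to the \emph{lazy} time $T_0^{[m]}=\min\{t:x_m^{(t)}=0\}$, i.e.\ the time in the growth process at which the trajectory of $\infty$ (which coincides with the bumping route by \cref{prop:lazy-traj-correspondence}) reaches column~$0$. A transpose symmetry (\cref{lem:transpose}) turns $\{T_0^{[m]}>u\}$ into the event that the special box of the augmented process initiated at time $u-m$ has been bumped at least once by time~$u$; this involves only the \emph{fixed} number $m$ of insertion steps, and the approximate-independence result (\cref{thm:independent-explicit}) immediately gives $\PP\{T_0^{[m]}>u\}=m/\sqrt{u}+o(1/\sqrt{u})$ (\cref{lem:lazy:m=1}). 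Second, one observes $Y_0^{[m]}=c^{(T_0^{[m]})}$, the number of rows of $\lambda^{(T_0^{[m]})}$, and uses standard large-deviation bounds for $|c^{(n)}/\sqrt{n}-2|$ to convert $T_0$ to $Y_0$: morally $Y_0\approx 2\sqrt{T_0}$, so $\PP\{Y_0\geq y\}\approx\PP\{T_0\geq (y/2)^2\}\approx 2m/y$. The missing idea in your proposal is precisely this lazy-time/transpose device; without it you are left analysing a long dependent chain, whereas with it the problem reduces to $m$ nearly independent Bernoulli trials.
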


This result is illustrated on \cref{fig:cdf} in the~behavior of each of the
cumulative distribution functions in the~neighborhood of $u=1$.
The~proof is postponed to \cref{sec:proof:prop:m=1}.

\begin{question}
What can~we say about the~other columns, that is 
the~tail asymptotics of $\PP\left\{  Y_x^{[m]} \geq y\right\}$ 
for fixed values of $x\in\N_0$ and
$m\geq 1$, in~the~limit $y\to\infty$?
\end{question}

\subsection{More open problems}

Let $\tableau$ be a~random Plancherel-distributed infinite standard Young
tableau. We consider the~\emph{bumping tree} \cite{Duzhin2019}
which is defined as the~collection
of all possible bumping routes for this tableau
\[ \left( \tableau\leftsquigarrow m + \nicefrac{1}{2} \: : \: m\in \N_0 \right) ,\]
which can~be visualized, for example, as on~\cref{fig:all-bumping}. 
Computer simulations suggest that the~set of boxes which
can~be reached by \emph{some} bumping route for a~given tableau $\tableau$ is
relatively `small'. It would be interesting to state this vague observation in
a~meaningful way. We conjecture that the~pictures such as
\cref{fig:all-bumping} which use the~logarithmic scale for the~$y$ coordinate
converge (in the~scaling when $x=O(1)$ is bounded and $y\to\infty$) to some
meaningful particle jump-and-coalescence process.

\newcommand{\dir}[1]{figures-Poisson/zlepianie/#1}

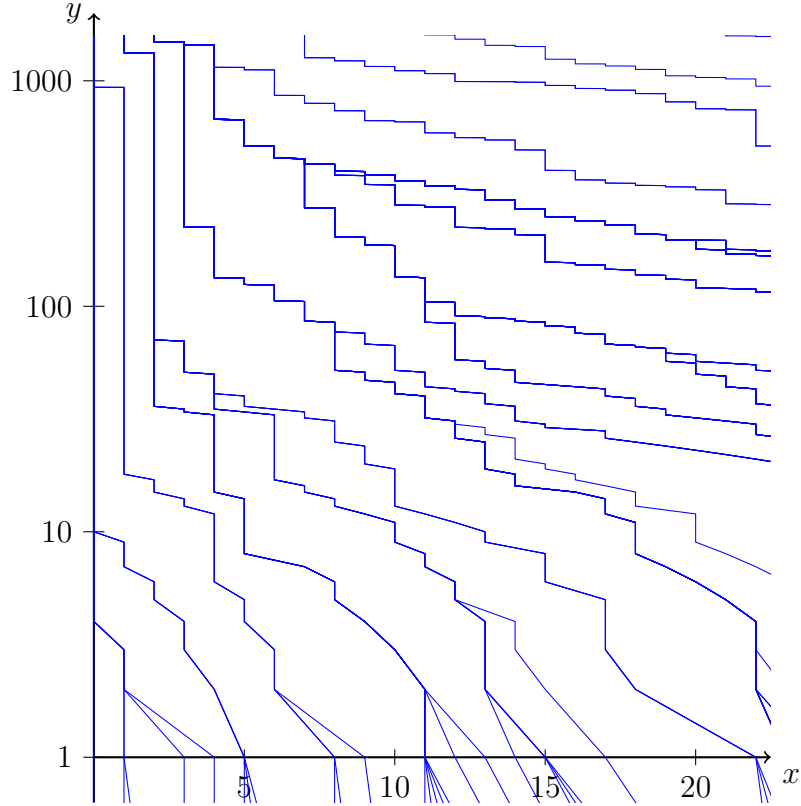
\begin{figure}
\centering
\begin{tikzpicture}[xscale=0.4,yscale=3]

    \draw[thick,->] (0,0) -- (22.5,0) node[anchor=north west]{$x$};
    \draw[thick,->] (0,-0.2) -- (0,3.3)   node[anchor=east]{$y$};
    
    \foreach \x in {5,10,15,20}
    { \draw  (\x,0 ) +(0,1pt) -- +(0,-1pt) node[anchor=north] {$ \x  $}; }

    \foreach \y/\yname in {0/1,1/10,2/100,3/1000}
    { \draw  (0,\y) +(10pt,0) -- +(-10pt,0) node[anchor=east] {$ \yname  $}; }
    
\clip (-0.1,-0.2) rectangle (22.5,3.2);

\foreach \x in {0,...,100,120,140,160,180,200,270}
{\draw[blue] plot  file {\dir{log\x.txt}}; }
\draw[blue] (0,-2) -- (0,1);

\end{tikzpicture}

\caption{All possible bumping routes (\emph{``the bumping tree''}) for a given
    Plancherel-distributed random infinite standard Young tableau. The $y$ axis is
    shown using the logarithmic scale. In order to improve visibility, each bumping
    route was drawn as a piecewise-affine function connecting the points~\eqref{eq:bumping_points} 
		and \emph{not} as a jump function as in \cref{fig:tableauFrench}. }

\label{fig:all-bumping} 

\end{figure}

\subsection{Overview of the paper. Sketch of the proof of \cref{thm:poisson-point}}

As we already mentioned, the detailed proof of \cref{thm:poisson-point} is
postponed to \cref{sec:proof:thm:poisson-point}. In the following we present an
overview of the paper and a rough sketch of the proof.

\subsubsection{Trajectory of infinity. Lazy parametrization of the bumping route}
\label{sec:intro-toi}

Without loss of generality we may assume that the Plancherel-distributed infinite
tableau $\tableau$ from the statement of \cref{thm:poisson-point} is of the form $\tableau=Q(\xi_1,\xi_2,\dots)$ for a sequence
$\xi_1,\xi_2,\dots$ of independent, identically distributed random variables with
the uniform distribution $U(0,1)$.

We will iteratively apply Schensted row insertion to 
the~entries of the~infinite sequence
\begin{equation}
    \label{eq:inftyinfty}
     \xi_1,\dots,\xi_m, \infty, \xi_{m+1}, \xi_{m+2}, \dots 
\end{equation}which is the~initial sequence $\xi$ with our favorite symbol $\infty$ inserted at the
position~$m+1$. At step $m+1$ the symbol $\infty$ is inserted at the end of the bottom
row; as further elements of the sequence \eqref{eq:inftyinfty} are inserted, the
symbol~$\infty$ stays put or is being bumped to the next row, higher and higher.

In \cref{prop:lazy-traj-correspondence} we will show that the trajectory of
$\infty$ in this infinite sequence of Schensted row insertions 
\begin{equation}
    \label{eq:inftyrowinsertions}
      \Big( \big( P(\xi_1,\dots,\xi_m, \infty)  \leftarrow \xi_{m+1} \big) \leftarrow  \xi_{m+2} \Big) \leftarrow \cdots 
\end{equation}
coincides with the bumping route $\tableau\leftsquigarrow m + \nicefrac{1}{2} $.
Thus our main problem is equivalent to studying the time evolution of the
position of $\infty$ in the infinite sequence of row insertions
\eqref{eq:inftyrowinsertions}. This time evolution also provides a convenient alternative
parametrization of the bumping route, called \emph{lazy parametrization}. 

\subsubsection{Augmented Young diagrams}
\label{sec:intro-augmented}

\newcommand{\mytableau}[1]{\mathcal{T}^{(#1)}}

For $t\geq m$ we consider the insertion tableau 
\begin{equation}
    \label{eq:mytableau}
    \mytableau{t}=  P(\xi_1,\dots,\xi_m, \infty,\xi_{m+1},\dots,\xi_t) 
\end{equation}
which appears at an intermediate step in \eqref{eq:inftyrowinsertions} after some
finite number of row insertions was performed. By removing the information about
the entries of the tableau $\mytableau{t}$ we obtain the \emph{shape} of $\mytableau{t}$, denoted by
$\operatorname{sh} \mytableau{t}$, which is a Young diagram with $t+1$ boxes. In the
following we will explain how to modify the notion of \emph{the shape of a tableau} so
that it better fits our needs.

Let us remove from the tableau
$\mytableau{t}$ the numbers $\xi_1,\dots,\xi_t$ and let us keep only the information about
the position of the box which contains the symbol~$\infty$. The resulting object, called 
\emph{augmented Young diagram} 
(see \cref{fig:augmented-yd} for an illustration),
can be regarded as a pair $\Lambda^{(t)}=(\lambda,\Box)$ which consists of:
\begin{itemize}
    \item 
the Young diagram $\lambda$ with $t$ boxes which keeps track of the positions of the boxes with the entries $\xi_i, i \in \{1, \dots, t\}$, in $\mytableau{t}$;
\item the outer corner $\Box$ of $\lambda$ which is the position of the box with $\infty$ in~$\mytableau{t}$.
\end{itemize}
We will say that $\operatorname{sh}^* \mytableau{t}=\Lambda^{(t)}$ is the \emph{augmented shape} of $\mytableau{t}$.

The set of augmented Young diagrams, denoted $\Augmented$, has a structure of an
oriented graph which is directly related to Schensted row insertion, as follows.
For a pair of augmented Young diagrams $\Lambda,\widetilde{\Lambda}\in\Augmented$ we say that $\Lambda\nearrow
\widetilde{\Lambda}$ if there exists a tableau $\tableau$ (which contains exactly one
entry equal to $\infty$) such that $\Lambda=\operatorname{sh}^* \tableau$ and there exists some
number $x$ such that $\widetilde{\Lambda}=\operatorname{sh}^* (\tableau\leftarrow
x)$, see \cref{fig:augmented-young-graph} and
\cref{sec:augmented-Young-graph} for more details.

With these notations the time evolution of the position of $\infty$ in the
sequence of row insertions \eqref{eq:inftyrowinsertions} can be extracted from
the sequence of the corresponding augmented shapes
\begin{equation}
    \label{eq:aPgpro}
     \Lambda^{(m)} \nearrow \Lambda^{(m+1)} \nearrow \cdots .
\end{equation}

\subsubsection{Augmented Plancherel growth processes} 

The random sequence~\eqref{eq:aPgpro} is called \emph{the augmented Plancherel
    growth process initiated at time $m$}; in~\cref{sec:aPgp} we will show that it is
a Markov chain with dynamics closely related to the usual (i.e., non-augmented)
Plancherel growth process. 
Since we have a freedom of choosing the value of the integer $m\in\N_0$, 
we get a whole family of augmented Plancherel growth processes. 
It turns out that the transition probabilities for these Markov chains
do not depend on the value of $m$.

Our strategy is to use the Markov property of
augmented Plancherel growth processes combined with the following two pieces of information.
\begin{itemize}
    \item \label{item:component-A}
    \emph{Probability distribution at a given time $t$.} In \cref{prop:distribution-fixed-time} we give an asymptotic
description of the  probability distribution of
$\Lambda^{(t)}$
in the scaling when $m,t\to\infty$ in such a way that $t=\Theta(m^2)$.

    \item \label{item:component-B} \emph{Averaged transition probabilities.} 
		In \cref{prop:transition-augmented} we give an asymptotic description of the
transition probabilities for the augmented Plancherel growth processes
between two moments of time $n$ and $n'$ (with $n<n'$) in the scaling when
$n,n'\to\infty$.

\end{itemize}
Thanks to these results we will prove \cref{thm:lazy-poisson} which gives an
asymptotic description of the probability distribution of the trajectory of the
symbol $\infty$ or, equivalently, the bumping route in the lazy parametrization.

Finally, in \cref{sec:removing-laziness} we explain how to translate this result
to the non-lazy parametrization of the bumping route in which the boxes of the
bumping route are parametrized by the index of the row; this completes the proof
of \cref{thm:poisson-point}.

\medskip

The main difficulty lies in the proofs of the aforementioned 
\cref{prop:distribution-fixed-time} and \cref{prop:transition-augmented};
in the following we sketch their proofs.

\subsubsection{Probability distribution of the augmented Plancherel growth process at a
    given time.}
\label{sec:at-given-time}

In order to prove the aforementioned \cref{prop:distribution-fixed-time} we need
to understand the probability distribution of the augmented shape of the
insertion tableau $\mytableau{t}$ given by \eqref{eq:mytableau} in the scaling
when $m=O  \big( \sqrt{t} \big)$.
Thanks to some symmetries of the RSK algorithm,
the tableau $\mytableau{t}$ is equal to the transpose of the insertion tableau
\begin{equation}
    \label{eq:transposed} 
    P(\underbrace{\xi_t, \xi_{t-1}, \dots,\xi_{m+1}}_{\text{$t-m$ entries}}, \infty,\xi_{m},\dots,\xi_1) 
\end{equation}
which corresponds to the original sequence read backwards.
Since the probability distribution of the sequence $\xi$ is invariant under permutations,
the augmented shape of the tableau \eqref{eq:transposed} can be viewed as
the value at time~$t$ of the augmented Plancherel growth process initiated at time $m':=t-m$.

The remaining difficulty is therefore to understand the probability distribution
of the augmented Plancherel growth process initiated at time $m'$, after
additional $m$ steps of Schensted row insertion were performed. We are interested in the asymptotic setting when
$m'\to\infty$ and the number of additional steps $m=O\big( \sqrt{m'} \big)$ is
relatively small. This is precisely the setting which was considered in our
recent paper about the Poisson limit theorem for the Plancherel growth process
\cite{MMS-Poisson2020v2}. We summarize these results in
\cref{sec:application-thm-poisson}; based on them we
prove in \cref{prop:distribution-fixed-time-A} that the index of the row of the
symbol $\infty$ in the tableau \eqref{eq:transposed} is asymptotically given by
the Poisson distribution.

By taking the transpose of the augmented Young diagrams we recover \cref{prop:distribution-fixed-time}, as desired.

\subsubsection{Averaged transition probabilities}

We will sketch the proof of the aforementioned \cref{prop:transition-augmented}
which concerns an augmented Plancherel growth process 
\begin{equation}
    \label{eq:pgp-atcolumnk}
     \Lambda^{(n)} \nearrow \Lambda^{(n+1)} \nearrow \cdots 
\end{equation}
for which the initial probability distribution at time $n$ is given by
$\Lambda^{(n)}= \big(\lambda^{(n)},\Box^{(n)} \big)$, where $\lambda^{(n)}$ is a random
Young diagram with $n$ boxes distributed (approximately) according to the
Plancherel measure and $\Box^{(n)}$ is its outer corner located in the column
with the fixed index $k$. Our goal is to describe the probability distribution of
this augmented Plancherel growth process at some later time $n'$, asymptotically as $n,n'\to\infty$.

Our first step in this direction is to approximate the probability distribution
of the Markov process \eqref{eq:pgp-atcolumnk} by a certain linear combination
(with real, positive and negative, coefficients) of the probability distributions
of augmented Plancherel growth processes initiated at time $m$. This linear combination is
taken over the values of $m$ which are of order $O\!\left( \sqrt{n} \right)$.
Finding such a linear combination required the results which we discussed above
in \cref{sec:at-given-time}, namely a~good understanding of the probability
distribution at time $n$ of the augmented Plancherel growth process initiated at some specified time $m=O\!\left( \sqrt{n} \right)$.

The probability distribution of $\Lambda^{(n')}$ is then approximately equal to
the aforementioned linear combination of the laws (this time evaluated  at
time~$n'$) of the augmented Plancherel growth processes initiated at some specific
times~$m$. This linear combination is straightforward to analyze because for each
individual summand the results from \cref{sec:at-given-time} are applicable. This
completes the sketch of the proof of \cref{prop:transition-augmented}.

\section{Growth of the~bottom rows}
\label{sec:application-thm-poisson}

In the~current section we will gather some results and some notations from our
recent paper \cite[Section~2]{MMS-Poisson2020v2} which will be necessary for 
the~purposes of the~current work.

\subsection{Total variation distance}

Suppose that $\mu$ and $\nu$ are (signed) measures on the~same discrete set $S$. Such
measures can~be identified with real-valued functions on $S$. 
We define the~\emph{total
    variation distance} between the~measures $\mu$ and $\nu$
\begin{equation} 
\label{eq:TVD1}
\delta( \mu, \nu) := \frac{1}{2} \left\| \mu-\nu \right\|_{\ell^1} 
\end{equation}
as half of their $\ell^1$ distance as functions. 
If $X$ and $Y$ are two random variables with values in the~same discrete set
$S$, we define their total variation distance $\delta(X,Y)$ as the~total
variation distance between their probability distributions
(which are probability measures on $S$).

Usually in the~literature the~total variation distance is defined only for
probability measures. In such a~setup the~total variation distance can~be expressed as
\begin{equation} 
\label{eq:TVD2}
 \delta( \mu, \nu) = \max_{X\subset S} \left| \mu(X) - \nu(X) \right| =
\left\| (\mu-\nu)^+ \right\|_{\ell^1}.  
\end{equation}
In the~current paper we will occasionally use the~notion of the~total variation
distance also for signed measures for which \eqref{eq:TVD1} and \eqref{eq:TVD2}
are \emph{not} equivalent.

\subsection{Growth of rows in Plancherel growth process}

\label{sec:plancherel-growth-process-independent}

Let \mbox{$\lambda^{(0)}\nearrow \lambda^{(1)} \nearrow \cdots$} be the~Plancherel growth
process. For integers $n\geq 1$ and $r\in \N_0$ we denote by $E^{(n)}_r$ the
random event which occurs if the~unique box of 
the skew diagram $\lambda^{(n)} / \lambda^{(n-1)}$ 
is located in the~row with the~index $r$.

The following result was proved by Okounkov \cite[Proposition 2]{Okounkov2000},
see also \cite[Proposition~2.7]{MMS-Poisson2020v2} for an alternative proof.

\begin{proposition}
    \label{prop:asymptotic-probability}
    For each $r\in \N_0$
    \[ 
    \lim_{n\to\infty} \sqrt{n}\ \PP\left(  E_r^{(n)} \right) =1 .\]    
\end{proposition}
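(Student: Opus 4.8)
We need to show that the probability that the $n$-th box added in the Plancherel growth process lands in row $r$ decays like $n^{-1/2}$, with the precise constant $1$. Let me think about how to prove this from scratch.

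The key is the transition rule for the Plancherel growth process. If $\lambda^{(n-1)} = \mu$, then the probability of adding a box in a given corner to form $\lambda = \mu + \square$ is $\frac{\dim \lambda}{(n) \dim \mu}$ where $\dim$ denotes the number of standard Young tableaux. So $\PP(E_r^{(n)})$ can be written as a sum over pairs $\mu \nearrow \lambda$ where the added box is in row $r$, weighted by $\PP_{n-1}(\mu) \cdot \frac{\dim\lambda}{n \dim\mu}$. Since $\PP_{n-1}(\mu) = \frac{(\dim\mu)^2}{(n-1)!}$, this becomes $\sum \frac{\dim\mu \cdot \dim\lambda}{n!}$, summed over the relevant pairs.

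Actually the cleanest route uses a known identity. Note that $\sum_{\mu \nearrow \lambda} \dim \mu = \dim \lambda$ restricted appropriately is not quite what we want; rather, summing over $\lambda \in \diagrams_n$ with the box in row $r$ and $\mu = \lambda^{\downarrow}$ its predecessor, we get $\PP(E_r^{(n)}) = \frac{1}{n!}\sum_{\lambda} \dim(\lambda \setminus \square_r(\lambda))\,\dim\lambda$ where $\square_r(\lambda)$ is the removable box of $\lambda$ in row $r$ (summing only over $\lambda$ having such a removable box). This is a sum over $\diagrams_n$ of a product of dimensions, which is precisely the kind of quantity amenable to the Plancherel measure / hook-length asymptotics.

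**First approach (via known asymptotics).** I would invoke the Logan–Shepp–Vershik–Kerov limit shape together with fluctuation results for the Plancherel measure, or more directly the content/character-theoretic interpretation: $\PP(E_r^{(n)})$ equals the expectation under $\Plancherel_n$ of a ratio $\frac{\dim \mu}{\dim \lambda}$ summed suitably, and such ratios are controlled by the transition measure of the diagram. Concretely, the probability that the new box in step $n$ is removable-from-row-$r$ at the previous stage is the $\Plancherel_{n-1}$-expectation of the weight the transition measure assigns to that corner. The transition measure of a Plancherel-typical diagram $\lambda^{(n-1)}$ concentrates (after rescaling by $\sqrt{n}$) on the semicircle distribution; the row-$r$ corner for fixed $r$ sits near the right edge $2\sqrt{n}$, and the mass the transition measure puts near the edge — in an interval of the appropriate width — scales like $n^{-1/2}$. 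Pinning down the constant to exactly $1$ requires the precise edge behavior.

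**The main obstacle** is exactly this constant. Getting "$\lim \sqrt{n}\,\PP(E_r^{(n)}) = 1$" with the sharp $1$ (and uniformly in the sense that it holds for every fixed $r$) is more delicate than getting the order $n^{-1/2}$; it needs either (a) a careful analysis of the edge of the Plancherel measure — e.g. via the determinantal/discrete Bessel kernel description of the Plancherel measure due to Borodin–Okounkov–Olshanski and Johansson, from which $\PP(E_r^{(n)})$ can be read off as a gap-type probability near the edge — or (b) Okounkov's original argument \cite{Okounkov2000} using the fact that $\sum_{r} \PP(E_r^{(n)})$ telescopes against known row-growth statistics, or (c) the self-contained combinatorial argument of \cite[Proposition 2.7]{MMS-Poisson2020v2}. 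Given that the paper explicitly cites both \cite{Okounkov2000} and \cite[Proposition 2.7]{MMS-Poisson2020v2}, the honest plan is: reduce $\PP(E_r^{(n)})$ to the dimension-ratio expression above, then cite one of these two sources for the sharp edge asymptotics rather than reprove the hook-length estimates. If a self-contained proof were wanted, I would follow route (a): express the event via the discrete Bessel kernel $J$, so that $\sqrt{n}\,\PP(E_r^{(n)}) \to$ an explicit edge integral that evaluates to $1$ by a Bessel-function identity. I expect the bookkeeping in matching the discrete edge rescaling to the continuous limit — and verifying the limit is genuinely independent of $r$ — to be the fussiest part.
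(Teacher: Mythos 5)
Your plan ultimately reduces to citing Okounkov \cite[Proposition 2]{Okounkov2000} or \cite[Proposition~2.7]{MMS-Poisson2020v2} for the sharp edge constant, which is exactly what the paper does: it offers no proof of this proposition at all, only those two references. So your approach (after the correct rewriting of $\PP\big(E_r^{(n)}\big)$ as $\frac{1}{n!}\sum \dim\mu\,\dim\lambda$ over the relevant pairs $\mu\nearrow\lambda$) matches the paper's treatment, and the additional determinantal-kernel sketch is an optional extra rather than a divergence.
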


\medskip

Let us fix an~integer $k\in\N_0$. We define $\mathcal{N}=\{0,1,\dots,k,\infty\}$.
For $n\geq 1$ we define the~random variable
$R^{(n)}\inrv \mathcal{N}$
which is given by
\[ R^{(n)} = 
\begin{cases}
r      & \text{if the~event $E^{(n)}_r$ occurs for $0\leq r\leq k$},\\
\infty  & \text{if the~event $E^{(n)}_r$ occurs for some $r>k$}.
\end{cases}
\]

\medskip

\newcommand{\nzero}{n}

Let $\ell=\ell(\nzero)$ be a~sequence of non-negative integers such that 
\[ 
\ell=O \! \left(\sqrt{\nzero} \right).
\]
For a~given integer $\nzero\geq (k+1)^2$ we focus on the
specific part of the~Plancherel growth process
\begin{equation}
\label{eq:focus} 
\lambda^{(\nzero)}\nearrow \cdots \nearrow\lambda^{(\nzero+\ell)}.
\end{equation}
We will encode some partial information about the~growths of the~rows as well
as about the~final Young diagram in \eqref{eq:focus}
by the~random vector
\begin{equation}
\label{eq:vn0}
V^{(\nzero)}= 
\left( 
R^{(\nzero+1)}, \: \dots , \: R^{(\nzero+\ell)}, \:
\lambda^{\left( \nzero+\ell\right)} \right) \in \mathcal{N}^{\ell}
\times \diagrams.   
\end{equation}

We also consider the~random vector
\begin{equation}
\label{eq:ovV-def} 
\overline{V}^{(\nzero)}= 
\left( 
\overline{R}^{(\nzero+1)}, \: \dots , \: \overline{R}^{(\nzero+\ell)}, \:
\overline{\lambda}^{\left( \nzero+\ell\right)} \right) \in \mathcal{N}^{\ell}
\times \diagrams   
\end{equation}
which is defined as a~sequence of independent random variables; the~random
variables $\overline{R}^{(\nzero+1)},\dots,\overline{R}^{(\nzero+\ell)}$ have the
same distribution given by
\begin{align*} 
\PP\left\{ \overline{R}^{(\nzero+i)}= r \right\} &= \frac{1}{\sqrt{\nzero}} 
& \text{for }r\in\{0,\dots,k\}, \; 1\leq i\leq \ell,
\\
\PP\left\{ \overline{R}^{(\nzero+i)}= \infty \right\} &= 1- \frac{k+1}{\sqrt{\nzero}} 
\end{align*}
and $\overline{\lambda}^{\left( \nzero+\ell\right)}$ is distributed according to the
Plancherel measure $\Plancherel_{\nzero+\ell}$; in particular the random
variables ${\lambda}^{\left( \nzero+\ell\right)}$ and $\overline{\lambda}^{\left(
    \nzero+\ell\right)}$ have the same distribution.

\medskip

Heuristically, the~following result states that when the Plancherel growth process
is in an~advanced stage and we observe a~relatively small number of its
additional steps, the~growths of the~bottom rows occur approximately like
independent random variables. Additionally, these growths do not affect too
much the~final shape of the~Young diagram.

\begin{theorem}[{\cite[Theorem~2.2]{MMS-Poisson2020v2}}]
\label{thm:independent-explicit} 

With the~above notations, for each fixed $k\in\N_0$ the~total variation
distance between $V^{(\nzero)}$ and $\overline{V}^{(\nzero)}$ converges to
zero, as $\nzero\to\infty$; more specifically
    \[ \delta\left(V^{(\nzero)}, \overline{V}^{(\nzero)}\right) =
             o\left(\frac{\ell}{\sqrt{\nzero}}\right). \]
\end{theorem}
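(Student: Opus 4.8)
The plan is to reduce the claim to a statement about the last $\ell$ boxes of a uniformly random standard Young tableau (SYT) of Plancherel shape, and then to a single ``one-step'' estimate for the reverse growth chain.

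\emph{Step 1: reduction.} I would use two classical facts: (a)~the marginal law of $\lambda^{(n+\ell)}$ is \emph{exactly} $\Plancherel_{n+\ell}$; and (b)~conditionally on $\lambda^{(n+\ell)}=\nu$, the path $\emptyset=\lambda^{(0)}\nearrow\cdots\nearrow\lambda^{(n+\ell)}=\nu$ is a uniformly random SYT of shape $\nu$ (every maximal chain in the Young lattice from $\emptyset$ to $\nu$ carries the same probability $\dim\nu/(n+\ell)!$). Together with the elementary identity
\[
 \delta\bigl(\mathrm{Law}(A,B),\ \mu_A\otimes\mu_B\bigr)=\E_{B}\;\delta\bigl(\mathrm{Law}(A\mid B),\ \mu_A\bigr),
\]
valid whenever the true marginal of $B$ equals $\mu_B$ --- applied with $A=(R^{(n+1)},\dots,R^{(n+\ell)})$, $B=\lambda^{(n+\ell)}$, $\mu_B=\Plancherel_{n+\ell}$, $\mu_A=\bigotimes_i\mathrm{Law}(\overline{R}^{(n+i)})$ --- the theorem becomes: \emph{for $\nu$ drawn from $\Plancherel_{n+\ell}$, the rows (recorded as $\infty$ whenever the row index exceeds $k$, following the definition of $R^{(\cdot)}$) containing the labels $n+1,\dots,n+\ell$ of a uniform SYT of shape $\nu$ form a sequence that is within total variation $o(\ell/\sqrt n)$ of an i.i.d.\ family with the $\overline{R}$-law.}

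\emph{Step 2: the reverse chain and peeling.} Reading the SYT downwards from its largest label, these rows are produced by the \emph{reverse} growth chain $\nu=\mu^{(0)}\searrow\mu^{(1)}\searrow\cdots\searrow\mu^{(\ell)}$, a Markov chain with one-step transition $\PP(\mu\searrow\mu\setminus\Box)=\dim(\mu\setminus\Box)/\dim\mu$; its $j$-th output $\rho_j$ is the row of the removed corner $\mu^{(j-1)}/\mu^{(j)}$, truncated at $k$. Iterating the identity above one coordinate at a time, together with the triangle inequality, the Markov property, and convexity of the total variation distance, the quantity to be bounded is controlled by $\sum_{i=1}^\ell\delta\bigl(\mathrm{Law}(\rho_i),\mathrm{Law}(\overline{R}^{(n+i)})\bigr)+\sum_{j=1}^\ell\E_{\mu^{(j-1)}}\,\delta\bigl(\mathrm{Law}(\rho_j\mid\mu^{(j-1)}),\mathrm{Law}(\rho_j)\bigr)$. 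The first sum is $o(\ell/\sqrt n)$ by \cref{prop:asymptotic-probability} applied at each time $n+i$ (using $\sqrt{n+i}=\sqrt n\,(1+o(1))$ uniformly for $i\le\ell=O(\sqrt n)$, since $\mathrm{Law}(\rho_i)$ is the truncation of $\mathrm{Law}(R^{(n+i)})$). For the second sum it suffices to show that, uniformly over the overwhelmingly likely values of $\mu^{(j-1)}$ --- a $\Plancherel_{n+\ell}$-distributed diagram with $O(\sqrt n)$ corners deleted --- and for every fixed $r\le k$,
\[
 \frac{\dim(\mu\setminus\Box_r)}{\dim\mu}=\frac{1}{\sqrt n}\bigl(1+o(1)\bigr)
\]
with a uniform error term; this makes $\mathrm{Law}(\rho_j\mid\mu^{(j-1)})$ and $\mathrm{Law}(\rho_j)$ agree up to $o(1/\sqrt n)$, and the $\ell=O(\sqrt n)$ contributions sum to $o(\ell/\sqrt n)$.

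\emph{Step 3: the one-step estimate, and the main obstacle.} The crux is the displayed asymptotics for $\dim(\mu\setminus\Box_r)/\dim\mu$. By the hook-length formula this ratio is an explicit product of factors $\tfrac{h}{h+1}$ over the hooks of $\mu$ lying in row $r$ (the arm of $\Box_r$) and in the column of $\Box_r$ (its leg); for fixed $r$ the arm is long --- of length $\asymp\sqrt n$ by the Vershik--Kerov--Logan--Shepp limit shape --- while the leg is short. The difficulty is precisely to upgrade \emph{Okounkov's averaged} statement $\sqrt N\,\PP(E^{(N)}_r)\to1$ (\cref{prop:asymptotic-probability}) to a \emph{pointwise-for-typical-$\mu$} statement with error $o(1)$ rather than merely $O(1)$, and to keep it uniform over the whole window of $\ell=O(\sqrt n)$ reverse steps. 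I expect this to follow from the concentration of the Plancherel measure around its limit shape --- which pins down the relevant hook lengths along the north-west boundary sharply enough --- combined with the fact that deleting $O(\sqrt n)$ corners from a diagram with $\asymp n$ boxes perturbs that boundary only at lower order; these are exactly the estimates underlying the alternative proof of \cref{prop:asymptotic-probability}. Once this uniform control of the hook-length ratios is available, Steps~1 and~2 are routine.
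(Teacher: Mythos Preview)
This theorem is not proved in the present paper; it is quoted from the companion article \cite{MMS-Poisson2020v2}. There is therefore no ``paper's own proof'' to compare your attempt against. I will comment on the proposal on its own merits.

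Your overall architecture is sound. Step~1 is correct: the marginal of $\lambda^{(n+\ell)}$ is exactly Plancherel, the conditional path is a uniform SYT, and the conditional-expectation identity for total variation that you state holds as written. The chain-rule/coupling reduction in Step~2 is a little loosely phrased (the $\rho_j$'s are truncated, so $\mu^{(j-1)}$ is not a function of $\nu,\rho_1,\dots,\rho_{j-1}$; you should condition on the full reverse path, or on $\mu^{(j-1)}$ directly, to invoke the Markov property), but the intended bound
\[
 \delta\bigl(V^{(n)},\overline{V}^{(n)}\bigr)\ \le\ \sum_{j=1}^{\ell}\E_{\mu^{(j-1)}}\Bigl[\delta\bigl(\mathrm{Law}(\rho_j\mid\mu^{(j-1)}),\ \mathrm{Law}(\overline R)\bigr)\Bigr]
\]
is valid once stated carefully. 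One small slip: the hook-ratio is $\prod h/(h-1)$, not $\prod h/(h+1)$.

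The genuine gap is exactly where you locate it, in Step~3. You need the \emph{pointwise} (for typical $\mu$) asymptotics $\dim(\mu\setminus\Box_r)/\dim\mu=(1+o(1))/\sqrt n$ with an $o(1)$ that is uniform over all $j\le\ell$ and over an event of probability $1-o(\ell/\sqrt n)$. You do not prove this; you only say you ``expect'' it to follow from the Vershik--Kerov--Logan--Shepp limit shape. That expectation is reasonable, but carrying it out is the entire content of the theorem: one has to control the full hook product along row~$r$ (roughly $2\sqrt n$ factors, each depending on a column height of $\mu$), show that the deviations from the limit-shape prediction contribute only $o(1)$ multiplicatively, and verify that removing up to $\ell=O(\sqrt n)$ corners does not spoil the estimate. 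This is precisely the analysis performed in \cite{MMS-Poisson2020v2}, and it is not a one-line consequence of concentration. As written, your proposal is a correct outline with the key quantitative step missing.
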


\section{Augmented Plancherel growth process}
\label{sec:augmented-plancherel}

In this section we will introduce our main tool: \emph{the~augmented Plancherel
    growth process} which keeps track of the~position of a~very large number in 
		the~insertion tableau when new random numbers are inserted.

\subsection{Lazy parametrization of bumping routes} 

Our first step towards the~proof of \cref{thm:poisson-point} is to introduce 
a~more convenient parametrization of the~bumping routes. In
\eqref{eq:bumping_points} we used $y$, the~number of the~row, as the~variable
which parametrizes the~bumping route. In the~current section we will introduce
the~\emph{lazy parametrization}.

\medskip

Let us fix a~(finite or infinite) standard Young tableau $\tableau$
and an~integer $m\in\N_0$. For a~given integer $t\geq m$ we denote by
\[ \Box^{\lazy}_{\tableau,m}(t) = 
\left(x^{\lazy}_{\tableau,m}(t), y^{\lazy}_{\tableau,m}(t) \right)\] 
the~coordinates of the~first box in the~bumping route $\tableau \leftsquigarrow
m+\nicefrac{1}{2}$ which contains an~entry of $\tableau$ which is bigger than $t$. 
If such a box does not exists, this means that the bumping route is finite, and  
all boxes of the tableau $\tableau$ which belong to the bumping route are $\leq t$. If this is the case we define 
$ \Box^{\lazy}_{\tableau,m}(t)$ to be the last box of the bumping route, i.e.~the box of the bumping route which lies outside of $\tableau$. 
We will refer to
\begin{equation} 
\label{eq:lazy}
t\mapsto \big(x^{\lazy}_{\tableau,m}(t), y^{\lazy}_{\tableau,m}(t) \big)
\end{equation} 
as the~\emph{lazy parametrization of the~bumping route}.

For example, for the~infinite tableau $\tableau$ from \cref{fig:tableauFrench} 
and $m=3$ the~usual parametrization of the~bumping route is given by
\begin{align*} \tableau_{\leftsquigarrow m+\nicefrac{1}{2}}(y) &= 
\begin{cases}
3 & \text{for } y=0, \\
2 & \text{for } y=1, \\
1 & \text{for } y=2, \\
1 & \text{for } y=3, \\
0 & \text{for } y\geq 4,
\end{cases}
\\
\intertext{while its lazy counterpart is given by}
\Box^{\lazy}_{\tableau,m}(t)=\left(x^{\lazy}_{\tableau,m}(t), y^{\lazy}_{\tableau,m}(t) \right) & = 
\begin{cases}
(3,0)            & \text{for } t\in\{3,4,5\}, \\
(2,1)            & \text{for } t\in\{6,7,8\}, \\
(1,2)            & \text{for } t=9, \\
(1,3)            & \text{for } t\in\{10,11\}, \\
(0,4)            & \text{for } t=12, \\
(0,5)            & \text{for } t=13, \\
(0,6)            & \text{for } t\in\{14,15,16\}, \\
\;\;\;\vdots           & 
\end{cases}
\end{align*}

Clearly, the~set of values of the~function \eqref{eq:lazy} coincides with the
bumping route understood in the~traditional way \eqref{eq:bumping_points}.

We denote by $\tableau|_{\leq t}$ the outcome of keeping only these boxes of 
$\tableau$ which are at most $t$. Note that the element of the bumping route
\begin{equation}
    \label{eq:bumping-explicit}
     \Box^{\lazy}_{\tableau,m}(t) = 
\operatorname{sh} \big( \tableau|_{\leq t} \leftarrow m+\nicefrac{1}{2} \big)\; / \; \operatorname{sh} \left( \tableau|_{\leq t} \right)
\end{equation}
is the unique box of the difference of two Young diagrams on the right-hand side. 

\subsection{Trajectory of $\infty$}
\label{sec:trajectory}

Let $\xi=(\xi_1,\xi_2,\dots)$ be a~sequence of independent, identically distributed
random variables with the~uniform distribution $U(0,1)$ on the~unit interval
$[0,1]$ and let $m\geq 0$ be a~fixed integer. 
We will iteratively apply Schensted row insertion to 
the~entries of the~infinite sequence
\[ \xi_1,\dots,\xi_m, \infty, \xi_{m+1}, \xi_{m+2}, \dots\]
which is the~initial sequence $\xi$ with our favorite symbol $\infty$
inserted at position~$m+1$. 
(The~Readers who are afraid of infinity may replace
it by any number which is strictly bigger than~all of the~entries of the
sequence $\xi$.)
Our goal is to investigate the~position of the~box containing $\infty$ as a
function of the~number of iterations. More specifically, for an~integer $t\geq m$ we
define
\begin{equation}
\label{eq:trajectory} 
\Box^{\traj}_m(t)=
\Pos_\infty \! \big( P(\xi_1,\dots,\xi_m, \infty, \xi_{m+1}, \dots, \xi_t ) \big)
\end{equation}
to be the~position of the~box containing $\infty$ in the~appropriate insertion
tableau.
This problem was formulated by Duzhin \cite{Duzhin2019}; the~first
asymptotic results in the~scaling in which $m\to\infty$ and $t=O(m)$ were found
by \Marciniak~\cite{Marciniak2020}. In the~current paper we go beyond this
scaling and consider \mbox{$m\to\infty$} and $t=O \! \left(m^2\right)$;
the~answer for this problem is essentially contained in 
\cref{thm:lazy-poisson}.

The~following result shows a~direct link between the~above
problem and the~asymptotics of bumping routes. This result also shows an
interesting link between the~papers \cite{RomikSniadyBumping} and
\cite{Marciniak2020}.

\begin{proposition}
    \label{prop:lazy-traj-correspondence}
    Let $\xi_1,\xi_2,\dots$ be a~(non-random or random) sequence and
$\tableau=Q(\xi_1,\xi_2,\dots)$ be the~corresponding recording tableau.
Then for each $m\in\N$ the~bumping route in the~lazy parametrization
coincides with the~trajectory of $\infty$ as defined in
\eqref{eq:trajectory}:
    \begin{equation} 
    \label{eq:link}
    \Box^{\lazy}_{\tableau,m}(t) = \Box^{\traj}_m(t) \qquad \text{for each integer } t\geq m.
    \end{equation}
\end{proposition}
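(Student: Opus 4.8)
The plan is to establish the identity \eqref{eq:link} by induction on $t\geq m$, tracking simultaneously two descriptions of the same sequence of Schensted insertions. The key observation is that inserting the symbol $\infty$ into the bottom row of $P(\xi_1,\dots,\xi_m)$ and then continuing to insert $\xi_{m+1},\xi_{m+2},\dots$ is, box-by-box, the same process as building up the recording tableau $\tableau=Q(\xi_1,\xi_2,\dots)$ restricted to the boxes $\leq t$, but with one extra ``marked'' box --- the one occupied by $\infty$. More precisely, I would first record the elementary fact that for every $t\geq m$ the shape of the insertion tableau in \eqref{eq:trajectory} equals $\operatorname{sh}\big(\tableau|_{\leq t}\big)$ together with one additional box, namely the box containing $\infty$; this is because $\infty$ is larger than all the $\xi_i$, so it never bumps anything out of a row ahead of schedule, and the relative order of $\xi_1,\dots,\xi_t$ governs the positions of all the other boxes exactly as in the usual RSK reading of $Q(\xi_1,\dots,\xi_t)$.

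For the inductive step I would compare how each side of \eqref{eq:link} moves as $t$ increases by $1$. On the right-hand side, $\Box^{\traj}_m(t)$ is by definition $\Pos_\infty$ of the insertion tableau after inserting $\xi_1,\dots,\xi_m,\infty,\xi_{m+1},\dots,\xi_t$; when we next insert $\xi_{t+1}$, the box of $\infty$ either stays put (if $\xi_{t+1}$ does not reach the row of $\infty$, or reaches it in a column strictly to the right of $\infty$) or moves up by one row into the column directly above, exactly when the bumping cascade triggered by $\xi_{t+1}$ arrives at the box currently holding $\infty$ and bumps it. On the left-hand side, using the explicit formula \eqref{eq:bumping-explicit}, $\Box^{\lazy}_{\tableau,m}(t)$ is the unique box of $\operatorname{sh}\big(\tableau|_{\leq t}\leftarrow m+\nicefrac{1}{2}\big)/\operatorname{sh}\big(\tableau|_{\leq t}\big)$, and a parallel analysis shows that this box moves from step $t$ to step $t+1$ precisely when the box $\tableau|_{\leq t+1}/\tableau|_{\leq t}$ (i.e.\ the box of $\tableau$ with label $t+1$) is added in a position that forces the bumping route of $m+\nicefrac{1}{2}$ to advance one more row. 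Matching these two criteria is the crux: both say that the marked box advances iff the newly inserted value lands weakly to the left of the marked column in the marked row, and in both descriptions the set of ``occupied'' columns in each row at time $t$ is the same, namely the rows of $\operatorname{sh}\big(\tableau|_{\leq t}\big)$.

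The base case $t=m$ is immediate: $P(\xi_1,\dots,\xi_m,\infty)$ has $\infty$ at the end of the bottom row, i.e.\ at position $(\lambda_0,0)$ where $\lambda=\operatorname{sh} P(\xi_1,\dots,\xi_m)=\operatorname{sh}(\tableau|_{\leq m})$; and inserting $m+\nicefrac{1}{2}$ into $\tableau|_{\leq m}$ appends exactly one box at the same position $(\lambda_0,0)$ because every entry of $\tableau|_{\leq m}$ is $\leq m<m+\nicefrac{1}{2}$, so the bumping route never bumps and terminates in the bottom row. Thus $\Box^{\lazy}_{\tableau,m}(m)=(\lambda_0,0)=\Box^{\traj}_m(m)$.

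The main obstacle I anticipate is the careful verification that the ``advance criterion'' is literally the same on both sides --- i.e.\ that inserting $\xi_{t+1}$ directly into the tableau with $\infty$ present produces the same bumping path, up to the position of the $\infty$-box, as the step $\operatorname{sh}(\tableau|_{\leq t})\nearrow\operatorname{sh}(\tableau|_{\leq t+1})$ does together with one extra insertion of $m+\nicefrac{1}{2}$. This requires knowing that the presence of $\infty$ does not disturb the insertion of $\xi_{t+1}$ below the $\infty$-row at all (clear, since $\infty$ sits at an outer corner and $\xi_{t+1}<\infty$ means the cascade either stops before reaching it or pushes $\infty$ up), and that once the cascade reaches the $\infty$-row it behaves exactly as the bumping route of the value $m+\nicefrac{1}{2}$ would. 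A clean way to make this rigorous is to prove by induction the stronger statement that $P(\xi_1,\dots,\xi_m,\infty,\xi_{m+1},\dots,\xi_t)$ is obtained from $P(\xi_1,\dots,\xi_t)$ by performing the Schensted insertion of $m+\nicefrac12$ and labelling the newly created box as the $\infty$-box; the identity \eqref{eq:link} then follows by reading off positions. All remaining verifications are routine checks about which column a bumped entry lands in, using only that the rows of a tableau are strictly increasing.
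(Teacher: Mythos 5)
Your overall strategy --- induction on $t$ with base case $t=m$ and a comparison of ``advance criteria'' on the two sides --- is the same as the paper's second (inductive) proof, and your base case and the observation that both marked boxes advance exactly when the box of $\tableau$ labelled $t+1$ is created at the currently marked position are sound. However, the device you propose to make the inductive step rigorous is false. The tableau $P(\xi_1,\dots,\xi_m,\infty,\xi_{m+1},\dots,\xi_t)$ is \emph{not} obtained from $P(\xi_1,\dots,\xi_t)$ by row-inserting $m+\nicefrac{1}{2}$ and marking the new box: the number $m+\nicefrac{1}{2}$ is calibrated to separate the entries $1,\dots,m$ from $m+1,\dots,t$ of the \emph{recording} tableau $\tableau|_{\leq t}=Q(\xi_1,\dots,\xi_t)$, whereas the entries of the insertion tableau $P(\xi_1,\dots,\xi_t)$ are the values $\xi_i$ themselves, so comparing them with $m+\nicefrac{1}{2}$ is not the intended comparison at all. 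Concretely, for $\xi=(20,10,15)$ and $m=1$ one has $\Pos_\infty P(20,\infty,10,15)=(1,1)$, while inserting $1.5$ into $P(20,10,15)$ creates the box $(0,2)$. The correct statement of this kind --- that $\Pos_\infty$ is the box created by inserting $m+\nicefrac{1}{2}$ into $Q(\xi_1,\dots,\xi_t)$ --- is, via \eqref{eq:bumping-explicit}, exactly the proposition you are trying to prove, so it cannot serve as an auxiliary lemma without an independent argument.

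A second gap lies in the bump case itself. Matching the criteria for \emph{whether} the marked boxes advance is not enough; the induction also needs that, when they advance, they advance to the \emph{same} box, and your description of the dynamics here is inaccurate: when $\infty$ is bumped from $(x,y)$ it is appended at the end of row $y+1$, i.e.\ at the outer corner of the current shape in that row, not ``in the column directly above'' (also, a finite value arriving in the row of $\infty$ can never land strictly to its right --- only strictly to its left, or on $\infty$ itself). To close the step you must check, as the paper does in its Case~2, that the first box to the right of the last box of row $y+1$ of $\RSK(\xi_1,\dots,\xi_{t+1})$ coincides with the leftmost box of row $y+1$ of $\tableau$ whose entry exceeds $t+1$; this uses the standard fact that row $y+1$ of $\operatorname{sh}\left(\tableau|_{\leq t+1}\right)$ consists precisely of the boxes of $\tableau$ in that row with labels $\leq t+1$. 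With the false auxiliary claim replaced by this direct comparison (or by the paper's first, Sch\"utzenberger-based proof), the argument goes through.
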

We will provide two proofs of \cref{prop:lazy-traj-correspondence}. The first one
is based on the following classic result of Sch\"utzengerger.

\begin{fact}[{\cite{Schuetzenberger1963}}]
    \label{fact}
    For any permutation $\sigma$ the
                insertion tableau $P(\sigma)$ and the recording
   tableau $Q(\sigma^{-1})$, which corresponds to the inverse of $\sigma$, 
                are equal. 
       
    \end{fact}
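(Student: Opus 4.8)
The plan is to deduce the identity from the stronger \emph{transpose symmetry} of the RSK correspondence: if a permutation matrix $M$ is sent by RSK to a pair $(P,Q)$, then its transpose $M^{T}$ is sent to $(Q,P)$. Since the permutation matrix of $\sigma^{-1}$ is exactly the transpose of the permutation matrix of $\sigma$, this symmetry gives $\big(P(\sigma^{-1}),Q(\sigma^{-1})\big)=\big(Q(\sigma),P(\sigma)\big)$, and comparing first coordinates yields $P(\sigma^{-1})=Q(\sigma)$, equivalently $P(\sigma)=Q(\sigma^{-1})$, which is the assertion. Everything therefore reduces to proving the transpose symmetry in a form that makes the reflection $M\mapsto M^{T}$ transparent.

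To do this I would encode RSK by \emph{Fomin's growth diagrams}. Place the permutation matrix of $\sigma\in\Sym{n}$ on the integer points of an $n\times n$ square, marking a cross in the cell that records the pair $\big(i,\sigma(i)\big)$, and attach the empty diagram $\emptyset$ to every lattice point on the bottom and the left boundary. One then propagates a Young diagram to every lattice point by Fomin's \emph{local forward rule}, which determines the diagram at the north-east corner of a unit cell from the three diagrams at its south-west, north-west and south-east corners together with the information whether the cell contains a cross. The point of this encoding is twofold. First, the increasing chain of diagrams read along the top edge reconstructs the recording tableau $Q(\sigma)$, while the chain read along the right edge reconstructs the insertion tableau $P(\sigma)$, under the standard identification between increasing chains in $\diagrams$ and standard Young tableaux used in \eqref{eq:PGP0}. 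Second, the local forward rule is manifestly \emph{symmetric} under interchanging the two coordinate axes.

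With this in hand the conclusion is immediate. Reflecting the whole square across its main diagonal turns the cross pattern of $\sigma$ into the cross pattern of $\sigma^{-1}$, and---because the local rule is reflection symmetric and the boundary data ($\emptyset$ along the two lower edges, which are interchanged by the reflection) is preserved---it simply reflects every interior diagram as well. Under this reflection the top edge and the right edge are interchanged, so the two readings swap their roles: the top edge of the reflected diagram (which computes $Q(\sigma^{-1})$) coincides with the right edge of the original (which computes $P(\sigma)$). Hence $Q(\sigma^{-1})=P(\sigma)$, which is the claim.

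The one genuinely substantial step---everything else being bookkeeping with the boundary conventions---is the verification that the growth-diagram readings really do reproduce the Schensted insertion and recording tableaux, that is, that the local forward rule is equivalent to one step of row insertion \eqref{eq:insertion}. I would prove this by induction on the columns of the grid: adjoining the rightmost column corresponds to inserting the last letter of $\sigma$, and one checks that the column of diagrams produced by iterating the local rule traces out precisely the bumping route of that insertion, so that the successive differences of diagrams along the edge record the added boxes in the correct order. This equivalence, combined with the evident symmetry of the local rule, is the crux of the argument.
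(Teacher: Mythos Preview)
Your argument is correct: the growth-diagram proof of the transpose symmetry of RSK is one of the standard modern routes to this classical result, and your outline (symmetry of the local rule, reflection of the boundary data, identification of the top and right edges with $Q$ and $P$) captures it accurately. The one substantive verification you flag---that the growth-diagram construction reproduces Schensted insertion---is indeed the nontrivial ingredient, and your inductive plan for it is the usual one.

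There is, however, nothing to compare against: the paper does not prove this statement. It is recorded as a \emph{Fact} with a bare citation to Sch\"utzenberger's 1963 paper and then used as a black box in the first proof of \cref{prop:lazy-traj-correspondence}. So you have supplied a self-contained proof where the paper simply invokes the literature.
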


\begin{proof}[The first proof of \cref{prop:lazy-traj-correspondence}.]
Let $\pi=(\pi_1,  \dots, \pi_t)\in\Sym{t}$ 
be the \emph{permutation generated by the sequence 
$(\xi_1, \dots,\xi_t)$}, that is the unique
permutation such that for any choice of indices 
$i<j$ the condition $\pi_i<\pi_j$ holds true if 
and only if $\xi_i \leq \xi_j$. 
Let $\pi^{-1}=(\pi_1^{-1}, \dots,\pi^{-1}_t)$ 
be the inverse of $\pi$. 
Since RSK depends only on the relative order of entries, 
the restricted tableau $\tableau|_{\leq t}$ is equal to
 \begin{equation}
\label{eq:Marciniak-equality}
\tableau|_{\leq t} =Q(\xi_1,\dots, \xi_t)=Q(\pi)=P(\pi^{-1}).
\end{equation}

By \eqref{eq:bumping-explicit}, \eqref{eq:Marciniak-equality} and \cref{fact}
it follows that
    \begin{align*}
        \Box^{\lazy}_{\tableau,m}(t)&=
        \operatorname{sh} \left( P(\pi^{-1}) \leftarrow m+\nicefrac{1}{2} \right)\;  /\; \operatorname{sh} P(\pi^{-1}) \\
        &=
        \operatorname{sh} P\left(\pi^{-1}_1,\dots,\pi^{-1}_t,m+\nicefrac{1}{2} \right)
				\;  /\; 
				\operatorname{sh} P\left(\pi^{-1}_1,\dots,\pi^{-1}_t\right) \\
        &=\Pos_{t+1}\left(Q\left(\pi_1^{-1},  \dots, \pi_t^{-1}, m+\nicefrac{1}{2}\right)\right)\\
        &\stackrel{\text{\cref{fact}}}{=}\Pos_{t+1} \left(P(\pi_1,  \dots, \pi_{m}, t+1, \pi_{m+1}, \dots, \pi_t)\right) \\
        &=\Pos_{\infty} \left( P(\xi_1, \dots, \xi_{m}, \infty, \xi_{m+1}, \dots, \xi_t) \right) \\
        &=\Box^{\traj}_m(t)
    \end{align*}
    since the permutation $(\pi_1, \dots, \pi_{m}, t+1, \pi_{m+1},
\dots, \pi_t)$ is the inverse of the permutation generated by the sequence
$(\pi_1^{-1},  \dots, \pi_t^{-1}, m+\nicefrac{1}{2})$.
\end{proof}

The above proof has an advantage of being short and abstract.
The following alternative proof highlights the `dynamic' aspects 
of the bumping routes and the trajectory of infinity.

\begin{proof}[The second proof of \cref{prop:lazy-traj-correspondence}]
        
    We use induction over the~variable $t$.
    
    \medskip
    
The~induction base $t=m$ is quite easy: $\Box^{\lazy}_{\tableau,m}(m)$ is
the~leftmost box in the~bottom row of $\tableau$ which contains a~number which is
bigger than~$m$. This box is the~first to the~right of the~last box in the~bottom
row in the~tableau $Q(\xi_1, \ldots, \xi_m)$. On the other hand, since this
recording tableau has the~same shape as the insertion tableau $P(\xi_1, \ldots,
\xi_m)$, it follows that $\Box^{\traj}_m(m) = \Box^{\lazy}_{\tableau,m}(m)$ and
the proof of the induction base is completed.
    
    \medskip
    
    We start with an~observation that $\infty$ is bumped in the~process of
calculating the~row insertion
    \begin{equation}
    \label{eq:indution}
    P(\xi_1,\dots,\xi_m, \infty, \xi_{m+1}, \dots, \xi_t ) \leftarrow \xi_{t+1} 
    \end{equation}
    if and only if the~position of $\infty$ at time $t$, 
		that is $\Box^{\traj}_m(t)$,
    is the~unique box which belongs to the skew diagram
    \[ 
    \RSK(\xi_1,\dots, \xi_{t+1}) / 
    \RSK(\xi_1,\dots, \xi_t ). \]
    The~latter condition holds true if and only if
    the~entry of $\tableau$ located in the~box
    $ \Box^{\traj}_m(t)$ fulfills 
    \[ \tableau_{\Box^{\traj}_m(t)} = t+1.\]
		
   \medskip
    
    In order to make the~induction step we assume that the~equality
\eqref{eq:link} holds true for some $t\geq m$. There are the~following two
cases.
    
    \smallskip
    
    \emph{Case 1. Assume that 
		the~entry of $\tableau$ located in the~box
        $\Box^{\lazy}_{\tableau,m}(t)$ is strictly bigger than~$t+1$.}
    In this case the~lazy bumping route stays put and 
    \[ \Box^{\lazy}_{\tableau,m}(t+1) = \Box^{\lazy}_{\tableau,m}(t).\]
    
    By the~induction hypothesis, the~entry of $\tableau$ located in the~box $
\Box^{\traj}_m(t)=  \Box^{\lazy}_{\tableau,m}(t)$ is bigger than~$t+1$. By
the~previous discussion, $\infty$ is not bumped in the~process of
calculating the~row insertion \eqref{eq:indution} hence
    \[ \Box^{\traj}_m(t+1)  = \Box^{\traj}_m(t) \]
    and the~inductive step holds true.
    
    \smallskip
    
    \emph{Case 2. Assume that 
    the~entry of $\tableau$ located in the~box $\Box^{\lazy}_{\tableau,m}(t)$ is
    equal to $t+1$.} In this case the~lazy bumping route moves to the~next row.
It~follows that $\Box^{\lazy}_{\tableau,m}(t+1)$ is the~leftmost box of $\tableau$ in the
row above $\Box^{\lazy}_{\tableau,m}(t)$ which contains a~number which is
bigger than~$\tableau_{\Box^{\lazy}_{\tableau,m}(t)} = t+1$.
        
By the~induction hypothesis, $\tableau_{\Box^{\traj}_m(t)} =
\tableau_{\Box^{\lazy}_{\tableau,m}(t)} = t+1$, so $\infty$ is bumped in the
process of calculating the~row insertion \eqref{eq:indution} to the~next row~$r$. 
The~box $\Box^{\traj}_m(t+1)$ is the~first to the~right of the~last box in
the~row~$r$ in $\RSK(\xi_1, \dots, \xi_t, \xi_{t+1})$. Clearly, this is the~box
in the~row $r$ of $\tableau$ which has the~least entry among those which are
bigger than~$t+1$, so it is the~same as~$\Box^{\lazy}_{\tableau,m}(t+1)$. 
\end{proof}

\subsection{Augmented Young diagrams. Augmented shape of a tableau}
\label{sec:augmented-Young-diagrams}

For the motivations and heuristics behind the notion of augmented Young diagrams see \cref{sec:intro-augmented}.

A pair $\Lambda=(\lambda,\Box)$ will be called an \emph{augmented Young diagram} if $\lambda$ is a~Young diagram and $\Box$ is one of its outer corners,
see \cref{fig:tableau-after}. We will say that $\lambda$ is \emph{the regular part of $\Lambda$} and that $\Box$ is \emph{the special box of $\Lambda$}.

The~set of augmented Young diagrams will be denoted 
by $\Augmented$ and for $n\in\N_0$
we will denote by $\Augmented_n$ the set of 
augmented Young diagrams~$(\lambda,\Box)$ with the~additional
property that $\lambda$ has $n$ boxes 
(which we will shortly denote by $|\lambda|=n$).

\medskip

Suppose $\tableau$ is a tableau with the property that exactly one of its entries is equal to $\infty$. We define the \emph{augmented shape of $\tableau$}
\[ \operatorname{sh}^* \tableau = \left( \operatorname{sh} \left( \tableau \setminus \{ \infty\} \right), \Pos_{\infty} \tableau \right) \in \Augmented,\]
as the pair which consists of (a) the shape of $\tableau$ after removal of the box with~$\infty$, and (b) the location of the box with~$\infty$ in~$\tableau$, 
see \cref{fig:augmented-yd}.

\subsection{Augmented Young graph}
\label{sec:augmented-Young-graph}

The~set $\Augmented$ can~be equipped with a~structure of an~oriented graph,
called \emph{augmented Young graph}. 
We declare that a~pair 
$\Lambda\nearrow \widetilde{\Lambda}$ forms an~oriented edge 
(with $\Lambda=(\lambda,\Box)\in\Augmented$ and $\widetilde{\Lambda}=(\widetilde{\lambda},\widetilde{\Box})\in\Augmented$)
if the~following two conditions hold true:
\begin{equation}
\label{eq:augmented-edge} 
\lambda\nearrow\widetilde{\lambda} \ \ \text{and} \ \
\widetilde{\Box} = \begin{cases} 
\text{the~outer corner of $\widetilde{\lambda}$} & \\  \quad \text{which is in the~row above $\Box$}    
                            &  \text{if }\widetilde{\lambda}/\lambda= \left\{ \Box \right\},   \\[1.5ex] 
\Box &  \text{otherwise,}
\end{cases}
\end{equation}
see \cref{fig:augmented-yd} for an~illustration. If $\Lambda\nearrow
\widetilde{\Lambda}$ (with $\Lambda=(\lambda,\Box)\in\Augmented$ and
$\widetilde{\Lambda}=(\widetilde{\lambda},\widetilde{\Box})\in\Augmented$) are
such that $\Box\neq \widetilde{\Box}$ (which corresponds to the first case on the
right-hand side of \eqref{eq:augmented-edge}), we will say that \emph{the
    edge $\Lambda\nearrow\widetilde{\Lambda}$ is a bump}.

\begin{figure}
\centering
\subfloat[]{
    \begin{tikzpicture}
    \coordinate (central) at (0,0);
    ;
    \node at (0.5,0.5) {16}; 
    \node at (1.5,0.5) {37}; 
    \node at (2.5,0.5) {41}; 
    \node at (3.5,0.5) {82}; 
    \node at (0.5,1.5) {23}; 
    \node at (1.5,1.5) {53}; 
    \node at (2.5,1.5) {$\infty$}; 
    \node at (0.5,2.5) {74}; 
     \draw[thick] (central) ++(0,0)
    {[current point is local] ++(0,0) rectangle +(1,1)}
    {[current point is local] ++(1,0) rectangle +(1,1)}
    {[current point is local] ++(2,0) rectangle +(1,1)}
    {[current point is local] ++(3,0) rectangle +(1,1)}
    {[current point is local] ++(0,1) rectangle +(1,1)}
    {[current point is local] ++(1,1) rectangle +(1,1)}
    {[current point is local] ++(0,2) rectangle +(1,1)}
    {[current point is local] ++(2,1) rectangle +(1,1)} 
    ;
\end{tikzpicture}
\label{fig:tableau-before}
}
\hfill
\subfloat[]{
  \begin{tikzpicture}
    \coordinate (central) at (0,0);
    \draw[dotted,thick] (central) ++(0,0) 
    {[current point is local] ++(2,1) rectangle +(1,1)}
    ;
    \draw[blue,opacity=0.5,thick] (central) ++(0,0) 
    ++(2,1) +(0.1,0.1) -- +(0.9,0.9) +(0.1,0.9) -- +(0.9,0.1) 
    ;
    \draw[thick] (central) ++(0,0) 
    {[current point is local] ++(0,0) rectangle +(1,1)}
    {[current point is local] ++(1,0) rectangle +(1,1)}
    {[current point is local] ++(2,0) rectangle +(1,1)}
    {[current point is local] ++(3,0) rectangle +(1,1)}
    {[current point is local] ++(0,1) rectangle +(1,1)}
    {[current point is local] ++(1,1) rectangle +(1,1)}
    {[current point is local] ++(0,2) rectangle +(1,1)}
    ;
\end{tikzpicture}
\label{fig:tableau-after} } \caption{ %
\protect\subref{fig:tableau-before} Example of a tableau $\tableau$ which has exactly
one entry equal to $\infty$. \protect\subref{fig:tableau-after} The augmented
shape $\operatorname{sh}^* \tableau=(\lambda,\Box)$ of the tableau $\tableau$. The regular part 
$\lambda=(4,2,1)$ is shown as the Young diagram drawn with solid lines, the position of the special box
$\Box=(x_\Box,y_\Box)=(2,1)$ is marked as the decorated box drawn with the dotted lines.
With the notations of \cref{sec:inclusion-x} this augmented Young diagram corresponds to $(x_\Box,\lambda)=(2,\lambda)\in \N_0\times \diagrams$.}
\label{fig:augmented-yd}
\end{figure}

The above definition was specifically tailored so that the following simple lemma holds true.

\begin{lemma}
    \label{lem:why-the-bump}
Assume that $\tableau$ is a tableau which has exactly one entry equal to $\infty$ and let $x$ be some finite number. Then
\[ \left( \operatorname{sh}^* \tableau \right) \nearrow  \left( \operatorname{sh}^* \! \left( \tableau \leftarrow x\right) \right). \]
\end{lemma}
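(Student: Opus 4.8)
The plan is to prove this by a direct case analysis, mirroring exactly the two cases in the definition \eqref{eq:augmented-edge} of an edge in the augmented Young graph, and tracking what Schensted row insertion of a finite number $x$ does to the box containing $\infty$. Write $\Lambda = \operatorname{sh}^* \tableau = (\lambda, \Box)$, where $\lambda = \operatorname{sh}(\tableau \setminus \{\infty\})$ and $\Box = \Pos_\infty \tableau$, and write $\widetilde{\tableau} = \tableau \leftarrow x$, $\widetilde{\Lambda} = \operatorname{sh}^* \widetilde{\tableau} = (\widetilde{\lambda}, \widetilde{\Box})$. First I would record the basic fact that inserting a finite number $x$ into $\tableau$ adds exactly one box to the shape, and that $\widetilde{\lambda}$ differs from $\lambda$ by that one new box unless the new box is precisely where $\infty$ used to sit: so in all cases $\operatorname{sh}\widetilde{\tableau} / \operatorname{sh}\tableau$ is a single outer corner, and correspondingly $\widetilde{\lambda}/\lambda$ is either that same single box, or (in the exceptional case) $\widetilde{\lambda}/\lambda = \{\Box\}$ with the new outer box of the whole shape lying one row above $\Box$. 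This already shows $\lambda \nearrow \widetilde{\lambda}$.

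Next I would split into two cases according to whether the bumping route of $\tableau \leftarrow x$ passes through the box $\Box$ containing $\infty$. In Case~1, the bumping route never reaches $\Box$: then the entry $\infty$ is never bumped (it is strictly larger than every finite number, so it cannot be displaced, and a row below it is unaffected once the route either terminates or jumps past the relevant column). Hence $\Pos_\infty \widetilde{\tableau} = \Box$, i.e.\ $\widetilde{\Box} = \Box$, and the new box $\widetilde{\lambda}/\lambda$ is located elsewhere; this is exactly the ``otherwise'' branch of \eqref{eq:augmented-edge}. In Case~2, the bumping route does reach $\Box$: since $\infty$ is bigger than the incoming number, $\infty$ is bumped out of $\Box$ and inserted into the next row up. Because $\infty$ exceeds every entry in that row, it lands in the first empty box of that row — which, $\lambda$ having gained its new box exactly at $\Box$ (that is, $\widetilde{\lambda}/\lambda = \{\Box\}$, as $\Box$ now holds the finite number that was pushed up into it), is precisely the outer corner of $\widetilde{\lambda}$ in the row directly above $\Box$. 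This is exactly the first branch of \eqref{eq:augmented-edge}. In both cases the pair $(\widetilde{\lambda}, \widetilde{\Box})$ satisfies the defining conditions, so $\Lambda \nearrow \widetilde{\Lambda}$.

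The only point that requires a little care — and the step I would expect to be the mild obstacle — is justifying in Case~2 that when $\infty$ is bumped into the row above $\Box$, the resulting box is an \emph{outer corner of $\widetilde{\lambda}$}, not just some box of $\widetilde{\tableau}$; this uses that $\Box$ is an outer corner of $\lambda$ (so the row above $\Box$ in $\lambda$ has length $\le$ the column index of $\Box$), that the finite number displaced from $\Box$ travels up and occupies $\Box$ itself in $\widetilde{\lambda}$ so $\widetilde{\lambda}$ has the new box at $\Box$, and that $\infty$, being maximal, appends to the end of its row so it sits immediately above and to the left of nothing, i.e.\ at the outer corner. Once this is checked, matching against \eqref{eq:augmented-edge} is immediate and the lemma follows.
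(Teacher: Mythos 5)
Your proof is correct and follows essentially the same route as the paper's: a two-case analysis according to whether $\infty$ is bumped (equivalently, whether $\widetilde{\lambda}/\lambda=\{\Box\}$), matching the two branches of the definition \eqref{eq:augmented-edge}. Your version merely spells out more explicitly the verification that the bumped $\infty$ lands at the outer corner of $\widetilde{\lambda}$ in the row above $\Box$, which the paper treats as immediate.
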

\begin{proof}
Let $\tableau':=\tableau/\{\infty\}$ be the tableau $\tableau$ with the box containing $\infty$ removed.
Denote $(\lambda,\Box)=\operatorname{sh}^* \tableau$ and 
$(\widetilde{\lambda},\widetilde{\Box})=   
\operatorname{sh}^* \! \left( \tableau \leftarrow x\right)$;
their regular parts 
\begin{align*} 
    \lambda &=\operatorname{sh} \tableau' , \\
    \widetilde{\lambda} &=\operatorname{sh} \left( \tableau' \leftarrow x \right) 
\end{align*}
clearly fulfill $\lambda\nearrow\widetilde{\lambda}$.

The position $\widetilde{\Box}$ of $\infty$ in $\tableau\leftarrow x$ is either: 
\begin{itemize}
    \item in the row immediately
above the position $\Box$ of $\infty$ in $\tableau$ (this happens exactly if
$\infty$ was bumped in the insertion $\tableau\leftarrow x$; equivalently if $\widetilde{\lambda}/\lambda=\{
\Box\}$), or
    \item the same as the position $\Box$ of $\infty$ in $\tableau$ (this happens
exactly when $\infty$ was not bumped; equivalently if
$\widetilde{\lambda}/\lambda\neq\{ \Box\}$).
\end{itemize}    
Clearly these two cases correspond to the second condition in
\eqref{eq:augmented-edge} which completes the proof.
\end{proof}

\newcommand{\beg}{m}

\subsection{Lifting of paths}
\label{sec:lifting}

We consider the~\emph{`covering map'} $p:\Augmented\to\diagrams$ given by taking
the regular part
\[ \Augmented \ni (\lambda, \Box) \xmapsto{p} \lambda\in \diagrams. \]

\begin{lemma}
    \label{lem:lifting}
For any $\Lambda^{(\beg)}\in\Augmented$ and 
any path in the~Young graph
\begin{equation} 
\label{eq:path}
\lambda^{(\beg)}\nearrow \lambda^{(\beg+1)} \nearrow \cdots \in \diagrams
\end{equation}
with a~specified initial element $\lambda^{(\beg)}=p\big( \Lambda^{(\beg)} \big) $ 
there exists the~unique \emph{lifted path} 
\[ \Lambda^{(\beg)}\nearrow \Lambda^{(\beg+1)} \nearrow \cdots \in \Augmented \]
in the~augmented Young graph 
with the~specified initial element $\Lambda^{(\beg)}$, and 
such that
$\lambda^{(t)}=p \big( \Lambda^{(t)} \big) $ 
holds true for each $t\in\{\beg,\beg+1,\dots\}$.
\end{lemma}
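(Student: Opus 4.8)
The plan is to build the lifted path one edge at a time, observing that at each step the next augmented diagram is \emph{forced}; this gives existence and uniqueness simultaneously, in the spirit of the unique path-lifting property for covering maps. Concretely, I would argue by induction on $t\geq\beg$ that there is exactly one augmented Young diagram $\Lambda^{(t)}\in\Augmented$ with $p\big(\Lambda^{(t)}\big)=\lambda^{(t)}$ which can occur as the time-$t$ term of a lifted path emanating from the prescribed $\Lambda^{(\beg)}$. The base case $t=\beg$ holds by hypothesis. For the inductive step, write $\Lambda^{(t)}=\big(\lambda^{(t)},\Box^{(t)}\big)$. Any admissible $\Lambda^{(t+1)}$ must satisfy $p\big(\Lambda^{(t+1)}\big)=\lambda^{(t+1)}$, so its regular part is already pinned down, and the whole step reduces to the following claim: given the edge $\lambda^{(t)}\nearrow\lambda^{(t+1)}$ of the Young graph and the outer corner $\Box^{(t)}$ of $\lambda^{(t)}$, there is exactly one outer corner $\widetilde{\Box}$ of $\lambda^{(t+1)}$ for which $\big(\lambda^{(t)},\Box^{(t)}\big)\nearrow\big(\lambda^{(t+1)},\widetilde{\Box}\big)$ is an edge of the augmented Young graph.

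That claim is exactly what the defining condition \eqref{eq:augmented-edge} supplies, but it rests on two elementary facts about Young diagrams which I would state and verify explicitly; this is the only place where any actual work is required. First, in the \emph{bump} case $\lambda^{(t+1)}/\lambda^{(t)}=\{\Box^{(t)}\}$: writing $\Box^{(t)}=(x,y)$, row $y+1$ is unaffected while row $y$ now has length $x+1$, which is strictly larger than the length of row $y+1$ (because $\lambda^{(t)}_{y+1}\leq\lambda^{(t)}_y=x$); hence $\lambda^{(t+1)}$ has a unique outer corner in the row above $\Box^{(t)}$, namely $\big(\lambda^{(t)}_{y+1},y+1\big)$, and \eqref{eq:augmented-edge} forces $\widetilde{\Box}$ to be this box. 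Second, in the remaining case the added box $\lambda^{(t+1)}/\lambda^{(t)}=\{\Box'\}$ lies in a row different from that of $\Box^{(t)}$ (each row of a Young diagram has at most one addable cell, and in the row of $\Box^{(t)}$ that cell is $\Box^{(t)}$ itself); consequently the addability of $\Box^{(t)}$, which held for $\lambda^{(t)}$, is preserved for $\lambda^{(t+1)}$, so $\Box^{(t)}$ is still an outer corner of $\lambda^{(t+1)}$, and \eqref{eq:augmented-edge} forces $\widetilde{\Box}=\Box^{(t)}$. In both cases such a $\widetilde{\Box}$ exists and is unique, closing the induction.

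Concatenating the forced choices then produces the lifted path $\Lambda^{(\beg)}\nearrow\Lambda^{(\beg+1)}\nearrow\cdots$, and uniqueness of the whole path is immediate: any other lift would agree with it at time $\beg$ and, by the inductive argument just given, at every later time. I do not anticipate any genuine obstacle — the two Young-diagram verifications in the middle paragraph are the heart of the matter, and everything else is bookkeeping. A slightly slicker packaging, which I might use instead: \eqref{eq:augmented-edge} defines, for every edge $\lambda\nearrow\widetilde{\lambda}$ of the Young graph and every outer corner $\Box$ of $\lambda$, a canonical outer corner $\widetilde{\Box}$ of $\widetilde{\lambda}$, i.e.\ a deterministic rule transporting the special box along an edge; the lifted path is then just the image of \eqref{eq:path} under iterating this rule started from the special box of $\Lambda^{(\beg)}$, and the lemma becomes the single observation that this transport rule is well defined.
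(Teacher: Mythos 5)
Your proof is correct and follows essentially the same route as the paper: observe that \eqref{eq:augmented-edge} forces a unique $\widetilde{\Box}$ for each edge $\lambda^{(t)}\nearrow\lambda^{(t+1)}$ and conclude by induction on $t$. The only difference is that you spell out the two elementary Young-diagram verifications (existence of the outer corner in the row above in the bump case, preservation of the outer corner otherwise) which the paper leaves implicit.
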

\begin{proof}
    From \eqref{eq:augmented-edge}  it follows that for each
$(\lambda,\Box)\in\Augmented$ and each $\widetilde{\lambda}$ such that
$\lambda\nearrow \widetilde{\lambda}$ there exists a unique
$\widetilde{\Box}$ such that
$(\lambda,\Box)\nearrow(\widetilde{\lambda},\widetilde{\Box})$. This shows
that, given $\Lambda^{(i)}$, the value of $\Lambda^{(i+1)}$ is determined
uniquely. This observation implies that the lemma can be proved by a
straightforward induction.
\end{proof}

\newcommand{\SX}{15}

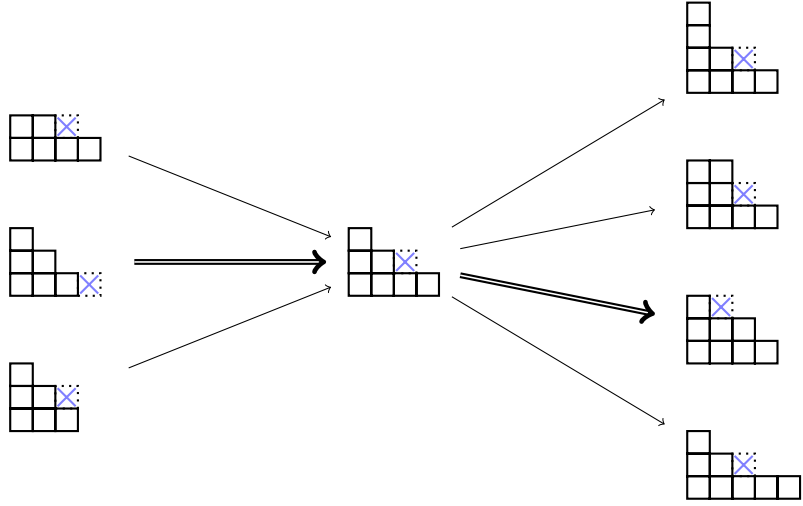
\begin{figure}
    \begin{tikzpicture}[scale=0.3]
    \coordinate (central) at (0,0);
    \draw[dotted,thick] (central) ++(-2,-1.5) 
    {[current point is local] ++(2,1) rectangle +(1,1)}
    ;
    \draw[blue,opacity=0.5,thick] (central) ++(-2,-1.5) 
    ++(2,1) +(0.1,0.1) -- +(0.9,0.9) +(0.1,0.9) -- +(0.9,0.1) 
    ;
    \draw[thick] (central) ++(-2,-1.5) 
    {[current point is local] ++(0,0) rectangle +(1,1)}
    {[current point is local] ++(1,0) rectangle +(1,1)}
    {[current point is local] ++(2,0) rectangle +(1,1)}
    {[current point is local] ++(3,0) rectangle +(1,1)}
    {[current point is local] ++(0,1) rectangle +(1,1)}
    {[current point is local] ++(1,1) rectangle +(1,1)}
    {[current point is local] ++(0,2) rectangle +(1,1)}
    ;

\coordinate (p1) at (\SX,-9);
\draw[dotted,thick] (p1) ++(-2,-1.5) 
{[current point is local] ++(2,1) rectangle +(1,1)}
;
\draw[blue,opacity=0.5,thick] (p1) ++(-2,-1.5) 
++(2,1) +(0.1,0.1) -- +(0.9,0.9) +(0.1,0.9) -- +(0.9,0.1) 
;
\draw[thick] (p1) ++(-2,-1.5) 
{[current point is local] ++(0,0) rectangle +(1,1)}
{[current point is local] ++(1,0) rectangle +(1,1)}
{[current point is local] ++(2,0) rectangle +(1,1)}
{[current point is local] ++(3,0) rectangle +(1,1)}
{[current point is local] ++(4,0) rectangle +(1,1)}
{[current point is local] ++(0,1) rectangle +(1,1)}
{[current point is local] ++(1,1) rectangle +(1,1)}
{[current point is local] ++(0,2) rectangle +(1,1)}
;

\coordinate (p2) at (\SX,-3);
\draw[dotted,thick] (p2) ++(-2,-1.5) 
{[current point is local] ++(1,2) rectangle +(1,1)}
;
\draw[blue,opacity=0.5,thick] (p2) ++(-2,-1.5) 
++(1,2) +(0.1,0.1) -- +(0.9,0.9) +(0.1,0.9) -- +(0.9,0.1) 
;
\draw[thick] (p2) ++(-2,-1.5) 
{[current point is local] ++(0,0) rectangle +(1,1)}
{[current point is local] ++(1,0) rectangle +(1,1)}
{[current point is local] ++(2,0) rectangle +(1,1)}
{[current point is local] ++(3,0) rectangle +(1,1)}
{[current point is local] ++(0,1) rectangle +(1,1)}
{[current point is local] ++(1,1) rectangle +(1,1)}
{[current point is local] ++(2,1) rectangle +(1,1)}
{[current point is local] ++(0,2) rectangle +(1,1)}
;

\coordinate (p3) at (\SX,3);
\draw[dotted,thick] (p3) ++(-2,-1.5) 
{[current point is local] ++(2,1) rectangle +(1,1)}
;
\draw[blue,opacity=0.5,thick] (p3) ++(-2,-1.5) 
++(2,1) +(0.1,0.1) -- +(0.9,0.9) +(0.1,0.9) -- +(0.9,0.1) 
;
\draw[thick] (p3) ++(-2,-1.5) 
{[current point is local] ++(0,0) rectangle +(1,1)}
{[current point is local] ++(1,0) rectangle +(1,1)}
{[current point is local] ++(2,0) rectangle +(1,1)}
{[current point is local] ++(3,0) rectangle +(1,1)}
{[current point is local] ++(0,1) rectangle +(1,1)}
{[current point is local] ++(1,1) rectangle +(1,1)}
{[current point is local] ++(0,2) rectangle +(1,1)}
{[current point is local] ++(1,2) rectangle +(1,1)}
;

\coordinate (p4) at (\SX,9);
\draw[dotted,thick] (p4) ++(-2,-1.5) 
{[current point is local] ++(2,1) rectangle +(1,1)}
;
\draw[blue,opacity=0.5,thick] (p4) ++(-2,-1.5) 
++(2,1) +(0.1,0.1) -- +(0.9,0.9) +(0.1,0.9) -- +(0.9,0.1) 
;
\draw[thick] (p4) ++(-2,-1.5) 
{[current point is local] ++(0,0) rectangle +(1,1)}
{[current point is local] ++(1,0) rectangle +(1,1)}
{[current point is local] ++(2,0) rectangle +(1,1)}
{[current point is local] ++(3,0) rectangle +(1,1)}
{[current point is local] ++(0,1) rectangle +(1,1)}
{[current point is local] ++(1,1) rectangle +(1,1)}
{[current point is local] ++(0,2) rectangle +(1,1)}
{[current point is local] ++(0,3) rectangle +(1,1)}
;

\draw[->] ($ (central)!3 cm!(p1) $) -- ($ (p1) !3.5 cm! (central)$) ;
\draw[->,double,thick] ($ (central)!3 cm!(p2) $) -- ($ (p2) !3.5 cm! (central)$) ;
\draw[->] ($ (central)!3 cm!(p3) $) -- ($ (p3) !3.5 cm! (central)$) ;
\draw[->] ($ (central)!3 cm!(p4) $) -- ($ (p4) !3.5 cm! (central)$) ;

\coordinate (q1) at (-\SX,-6);
\draw[dotted,thick] (q1) ++(-2,-1.5) 
{[current point is local] ++(2,1) rectangle +(1,1)}
;
\draw[blue,opacity=0.5,thick] (q1) ++(-2,-1.5) 
++(2,1) +(0.1,0.1) -- +(0.9,0.9) +(0.1,0.9) -- +(0.9,0.1) 
;
\draw[thick] (q1) ++(-2,-1.5) 
{[current point is local] ++(0,0) rectangle +(1,1)}
{[current point is local] ++(1,0) rectangle +(1,1)}
{[current point is local] ++(2,0) rectangle +(1,1)}
{[current point is local] ++(0,1) rectangle +(1,1)}
{[current point is local] ++(1,1) rectangle +(1,1)}
{[current point is local] ++(0,2) rectangle +(1,1)}
;

\coordinate (q2) at (-\SX,0);
\draw[dotted,thick] (q2) ++(-2,-1.5) 
{[current point is local] ++(3,0) rectangle +(1,1)}
;
\draw[blue,opacity=0.5,thick] (q2) ++(-2,-1.5) 
++(3,0) +(0.1,0.1) -- +(0.9,0.9) +(0.1,0.9) -- +(0.9,0.1) 
;
\draw[thick] (q2) ++(-2,-1.5) 
{[current point is local] ++(0,0) rectangle +(1,1)}
{[current point is local] ++(1,0) rectangle +(1,1)}
{[current point is local] ++(2,0) rectangle +(1,1)}
{[current point is local] ++(0,1) rectangle +(1,1)}
{[current point is local] ++(1,1) rectangle +(1,1)}
{[current point is local] ++(0,2) rectangle +(1,1)}
;

\coordinate (q3) at (-\SX,6);
\draw[dotted,thick] (q3) ++(-2,-1.5) 
{[current point is local] ++(2,1) rectangle +(1,1)}
;
\draw[blue,opacity=0.5,thick] (q3) ++(-2,-1.5) 
++(2,1) +(0.1,0.1) -- +(0.9,0.9) +(0.1,0.9) -- +(0.9,0.1) 
;
\draw[thick] (q3) ++(-2,-1.5) 
{[current point is local] ++(0,0) rectangle +(1,1)}
{[current point is local] ++(1,0) rectangle +(1,1)}
{[current point is local] ++(2,0) rectangle +(1,1)}
{[current point is local] ++(3,0) rectangle +(1,1)}
{[current point is local] ++(0,1) rectangle +(1,1)}
{[current point is local] ++(1,1) rectangle +(1,1)}
;

\draw[<-] ($ (central)!3 cm!(q1) $) -- ($ (q1) !3.5 cm! (central)$) ;
\draw[<-,double,thick] ($ (central)!3 cm!(q2) $) -- ($ (q2) !3.5 cm! (central)$) ;
\draw[<-] ($ (central)!3 cm!(q3) $) -- ($ (q3) !3.5 cm! (central)$) ;

    \end{tikzpicture}
    
    \caption{A part of the augmented Young graph. For each vertex
    $(\lambda,\Box)\in\Augmented$ the regular part $\lambda$ is drawn with the solid
    line and the special box $\Box$ is indicated as a decorated dotted square. For
    clarity this figure shows only the direct neighborhood of the augmented Young
    diagram $(\lambda,\Box)$ with $\lambda=(4,2,1)$ and $\Box=(2,1)$. The double
    thick arrows indicate the edges which are \emph{bumps}.}
    
    \label{fig:augmented-young-graph}
\end{figure}

\subsection{Augmented Plancherel growth process}
\label{sec:aPgp}

We keep the~notations from the beginning of \cref{sec:trajectory}, i.e., we
assume that $\xi=(\xi_1,\xi_2,\dots)$ is a~sequence of independent, identically
distributed random variables with the~uniform distribution $U(0,1)$ on the~unit
interval $[0,1]$ and $m\geq 0$ is a~fixed integer. We consider a~path in
the~augmented Young graph
\begin{equation}
\label{eq:Markov}
\Lambda_m^{(m)}\nearrow \Lambda_m^{(m+1)} \nearrow \cdots 
\end{equation}
given by
\[ \Lambda_m^{(t)} = \operatorname{sh}^* P(\xi_1,\dots,\xi_m,\infty,\xi_{m+1},\dots,\xi_t) \qquad \text{for any integer $t\geq m$}\]
(\cref{lem:why-the-bump} shows that \eqref{eq:Markov} is indeed a path in $\Augmented$).
We will call \eqref{eq:Markov} \emph{the augmented Plancherel growth process initiated at time~$m$}.
The coordinates of the special box of $\Lambda_m^{(t)}=\big( \lambda^{(t)},\Box_m^{(t)}\big)$  will be denoted by
\[ 
\Box_m^{(t)}=\big( x_m^{(t)},y_m^{(t)} \big).
\]

\begin{theorem}
The augmented Plancherel growth process initiated at time~$m$ is a Markov chain with the transition probabilities given for any $t\geq m$ by
\begin{multline}
    \label{eq:transition-probability-augmented} 
    \PPcond{ \Lambda_m^{(t+1)} =\widetilde{\Lambda} }{ \Lambda_m^{(t)} = \Lambda } =\\
    \begin{cases}
        \PPcond{ \lambda^{(t+1)}=\widetilde{\lambda} }{ \lambda^{(t)}=\lambda~}
        & \text{if }  \Lambda \nearrow  \widetilde{\Lambda}, 
        \\
        0 & \text{otherwise}
    \end{cases}
\end{multline}
for any $\Lambda,\widetilde{\Lambda}\in\Augmented$, where $\lambda$ is the
regular part of $\Lambda$ and $\widetilde{\lambda}$ is the regular part of
$\widetilde{\Lambda}$. These transition probabilities do not depend on the choice
of $m$. The conditional probability on the right-hand side is the transition
probability for the Plancherel growth process
$\lambda^{(0)}\nearrow\lambda^{(1)}\nearrow\cdots$.
\end{theorem}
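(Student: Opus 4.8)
The plan is to verify three things: (i) the sequence \eqref{eq:Markov} is a Markov chain, (ii) its transition probabilities have the stated form, and (iii) they do not depend on $m$. The key structural input is \cref{lem:lifting}: the augmented Plancherel growth process \eqref{eq:Markov} is the lift, via the covering map $p$, of the ordinary Plancherel growth process $\lambda^{(m)}\nearrow\lambda^{(m+1)}\nearrow\cdots$, where $\lambda^{(t)}=\operatorname{sh} P(\xi_1,\dots,\xi_m,\xi_{m+1},\dots,\xi_t)=\RSK(\xi_1,\dots,\xi_t)$ (the symbol $\infty$, being larger than every $\xi_i$, sits in an outer corner and does not affect the shape of the regular part — this needs a one-line justification). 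So the augmented process carries exactly the same randomness as the ordinary Plancherel growth process; the special box is a \emph{deterministic} function of the past trajectory of the regular parts together with the initial special box.

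First I would record the deterministic lifting fact precisely: by \cref{lem:why-the-bump} each step $\Lambda_m^{(t)}\nearrow\Lambda_m^{(t+1)}$ is an edge of the augmented Young graph, and by the uniqueness clause of \cref{lem:lifting} (equivalently, by the observation in its proof that given $(\lambda,\Box)$ and $\widetilde\lambda$ with $\lambda\nearrow\widetilde\lambda$ there is a \emph{unique} admissible $\widetilde\Box$), the special box $\Box_m^{(t+1)}$ is a function $f(\Box_m^{(t)},\lambda^{(t)},\lambda^{(t+1)})$ of the current special box and the current and next regular shapes. Consequently $\Lambda_m^{(t+1)}$ is a deterministic function of $\Lambda_m^{(t)}$ and of $\lambda^{(t+1)}$. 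Since $\big(\lambda^{(t)}\big)_{t\ge m}$ is the Plancherel growth process — a Markov chain — and $\lambda^{(t)}=p\big(\Lambda_m^{(t)}\big)$, one gets the Markov property of $\big(\Lambda_m^{(t)}\big)$: conditionally on $\Lambda_m^{(m)},\dots,\Lambda_m^{(t)}$, the only randomness entering $\Lambda_m^{(t+1)}$ is that of $\lambda^{(t+1)}$ given $\lambda^{(t)}$, which depends on the past only through $\lambda^{(t)}$, itself a function of $\Lambda_m^{(t)}$.

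Second I would compute the transition probability. Fix $\Lambda=(\lambda,\Box)$ and $\widetilde\Lambda=(\widetilde\lambda,\widetilde\Box)$ with $\PP\big(\Lambda_m^{(t)}=\Lambda\big)>0$. If $\Lambda\nearrow\widetilde\Lambda$ is not an edge, then by \cref{lem:why-the-bump} the event $\{\Lambda_m^{(t+1)}=\widetilde\Lambda\}$ is impossible, giving $0$. If $\Lambda\nearrow\widetilde\Lambda$, then by the deterministic lifting the event $\{\Lambda_m^{(t)}=\Lambda,\ \Lambda_m^{(t+1)}=\widetilde\Lambda\}$ coincides with $\{\Lambda_m^{(t)}=\Lambda,\ \lambda^{(t+1)}=\widetilde\lambda\}$: indeed on $\{\Lambda_m^{(t)}=\Lambda\}$ we have $\Box_m^{(t)}=\Box$, so once $\lambda^{(t+1)}=\widetilde\lambda$ the unique admissible special box is exactly $\widetilde\Box$ by \eqref{eq:augmented-edge}. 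Hence $\PPcond{\Lambda_m^{(t+1)}=\widetilde\Lambda}{\Lambda_m^{(t)}=\Lambda}=\PPcond{\lambda^{(t+1)}=\widetilde\lambda}{\Lambda_m^{(t)}=\Lambda}=\PPcond{\lambda^{(t+1)}=\widetilde\lambda}{\lambda^{(t)}=\lambda}$, the last equality because, conditionally on the past $\sigma$-field at time $t$, $\lambda^{(t+1)}$ depends only on $\lambda^{(t)}$ (Markov property of the Plancherel growth process). This is exactly \eqref{eq:transition-probability-augmented}.

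Finally, $m$-independence is immediate: the right-hand side of \eqref{eq:transition-probability-augmented} is a transition probability of the ordinary Plancherel growth process, which famously does not depend on the starting time (it is the Markov chain on $\diagrams$ with $\PPcond{\lambda^{(t+1)}=\widetilde\lambda}{\lambda^{(t)}=\lambda}=\frac{\dim\widetilde\lambda}{(|\lambda|+1)\dim\lambda}$ for $\lambda\nearrow\widetilde\lambda$), and the ``otherwise'' case is the constant $0$. I do not anticipate a genuine obstacle here; the only point requiring a little care is making the ``deterministic function of the past'' argument rigorous — i.e.\ checking that the $\sigma$-algebra generated by $\Lambda_m^{(m)},\dots,\Lambda_m^{(t)}$ equals the one generated by $\lambda^{(m)},\dots,\lambda^{(t)}$ together with the (deterministic, once $m$ is fixed) initial special box — after which the Markov property and the formula both drop out of \cref{lem:lifting}.
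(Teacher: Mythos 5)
Your proposal is correct and follows essentially the same route as the paper: both rest on the unique-lifting property (\cref{lem:why-the-bump,lem:lifting}), the fact that the regular parts form the usual Plancherel growth process, and the transfer of its Markovianity to the lifted chain. The only difference is bookkeeping — you phrase the argument through the deterministic one-step update of the special box and conditional probabilities, while the paper writes the joint law of the whole augmented trajectory as the joint law of the regular parts restricted to compatible lifted paths and then compares consecutive prefixes — which amounts to the same computation.
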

\begin{proof}
The~path \eqref{eq:Markov}
is the unique lifting (cf.~\cref{lem:lifting}) of the~sequence of the~regular parts
\begin{equation}
    \label{eq:lambdas}
    \lambda^{(m)}\nearrow \lambda^{(m+1)} \nearrow \cdots 
\end{equation}
with the~initial condition that the special box $\Box_m^{(m)}$ is the outer
corner of $\lambda^{(m)}$ which is located in the bottom row. 
It follows that for any augmented Young diagrams $\Sigma_{m},\dots,\Sigma_{t+1}\in\Augmented$ with the regular parts $\sigma_m,\dots,\sigma_{t+1}\in \diagrams$
\begin{multline}
    \label{eq:probability-prefix}
   \PP\left( \Lambda_m^{(m)} = \Sigma_{m}, \; \dots, \; \Lambda_m^{(t+1)}= \Sigma_{t+1} \right) = \\
  = \begin{cases}
       \PP\left( \lambda^{(m)} = \sigma_m, \; \dots ,\; \lambda^{(t+1)}=\sigma_{t+1} \right) & \text{if } \Sigma_{m}\nearrow \cdots \nearrow \Sigma_{t+1} \text{ and} \\ & \text{\quad the special box of $\Sigma_m$} \\ & \quad \text{is in the bottom row}, \\[1.5ex]
       0 & \text{otherwise}. 
   \end{cases}
\end{multline}

The sequence of the regular parts \eqref{eq:lambdas}
forms the~usual Plancherel growth process (with the~first
$m$ entries truncated) hence it is a~Markov chain (the proof that the usual
Plancherel growth process is a Markov chain can be found in \cite[Sections~2.2
and 2.4]{Kerov1999}). 
It follows that the probability on the top of the
right-hand side of \eqref{eq:probability-prefix} can be written in the product
form in terms of the probability distribution of $\lambda^{(m)}$ and the
transition probabilities for the Plancherel growth process.

We compare \eqref{eq:probability-prefix} with its counterpart for $t:=t-1$;
this shows that the conditional probability 
\[\PPcond{\Lambda_{m}^{(t+1)}=\Sigma_{t+1}}{\Lambda_m^{(m)} = \Sigma_{m}, \; \dots, \; \Lambda_m^{(t)}= \Sigma_{t}} \]
is equal to the right-hand side of \eqref{eq:transition-probability-augmented} for $\widetilde{\Lambda}:=\Sigma_{t+1}$ and $\Lambda:=\Sigma_t$. 
In particular, this conditional probability does not depend on the values of $\Sigma_{m},\dots,\Sigma_{t-1}$ and the Markov property follows.
\end{proof}

The special box in the augmented Plancherel growth process can be thought of as a
\emph{test particle} which provides some information about the local behavior of the
usual Plancherel growth process. From this point of view it is reminiscent of the
\emph{second class particle} in the theory of interacting particle systems or
\emph{jeu de taquin trajectory} for infinite tableaux \cite{RomikSniady-AnnPro}.

\subsection{Probability distribution of the~augmented Plancherel growth process}

\cref{prop:distribution-fixed-time-A} and \cref{prop:distribution-fixed-time}
below provide information about the~probability distribution of the~augmented
Plancherel growth process at time $t$ for $t\to\infty$ in two distinct
asymptotic regimes: very soon after the~augmented Plancherel process was
initiated (that is when 
$t=m+O(\sqrt{m})$, cf.~\cref{prop:distribution-fixed-time-A}) and after a
very long time after the~augmented Plancherel process was initiated 
(that is~when $t=\Theta(m^2) \gg m$, cf.~\cref{prop:distribution-fixed-time}).

\begin{proposition}
    \label{prop:distribution-fixed-time-A} Let $z>0$ be a~fixed positive number and
let $t=t(m)$ be a~sequence of positive integers such that $t(m) \geq m$ and 
with the~property that 
    \[ \lim_{m\to\infty} \frac{t-m}{\sqrt{t}} = z. \]
Let $\Lambda_m^{(m)}\nearrow \Lambda_m^{(m+1)} \nearrow \cdots$ 
be the augmented Plancherel growth process initiated at time~$m$. 
We denote $\Lambda_m^{(t)}=\big(\lambda^{(t)}, \Box_m^{(t)}\big)$; 
let $\Box_m^{(t)}=\big(x_m^{(t)}, y_m^{(t)}\big)$ be the
coordinates of the special box of $\Lambda_m^{(t)}$. 
    
\begin{enumerate}[label=\emph{\alph*}),itemsep=1ex]
    \item \label{item:distribution-fixed-time-A-a} 
    The~probability distribution
of $y_{m}^{(t)}$ converges, as $m\to\infty$, to the Poisson distribution $\operatorname{Pois}(z)$
with parameter $z$.

    \item \label{item:distribution-fixed-time-A-b}
    For each $k\in\N_0$ the~total
    variation distance between
    \begin{itemize}
        \item the~conditional probability distribution of $\lambda^{(t)}$ under
the~condition that  $y_{m}^{(t)}=k$, and \item the~Plancherel measure
$\Plancherel_t$
    \end{itemize}
    converges to $0$, as $m\to\infty$.

    \item \label{item:distribution-fixed-time-A-c}
    The~total variation distance between
    \begin{itemize}
        \item 
        the~probability distribution of the~random vector  
        \begin{equation} 
        \label{eq:ymlambda}
        \left(   \lambda^{(t)}, y_{m}^{(t)}    \right) \in  \diagrams \times \N_0
        \end{equation}
        and 
        \item 
        the~product measure
        \[  \Plancherel_t  \times \operatorname{Pois}(z) \]   
    \end{itemize}
    converges to $0$, as $m\to\infty$. 
\end{enumerate}
\end{proposition}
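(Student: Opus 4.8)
The plan is to reduce everything to the already-available \cref{thm:independent-explicit} via the transpose trick described in \cref{sec:at-given-time}. First I would recall that, by the symmetry of the RSK algorithm under reversal of the input word (Schützenberger), the insertion tableau $P(\xi_1,\dots,\xi_m,\infty,\xi_{m+1},\dots,\xi_t)$ is the transpose of the insertion tableau of the reversed word $P(\xi_t,\dots,\xi_{m+1},\infty,\xi_m,\dots,\xi_1)$; consequently $\operatorname{sh}^* \Lambda_m^{(t)}$ is, up to transposition of the Young diagram and swapping the coordinates of the special box, the value at time $t$ of the augmented Plancherel growth process initiated at time $m':=t-m$. Under this identification the quantity of interest, $y_m^{(t)}$, becomes the \emph{column index} of the special box, which in the reversed picture is the \emph{row index} of the special box of $\Lambda_{m'}^{(t)}$. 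So it suffices to prove: for the augmented Plancherel growth process initiated at time $m'$, after exactly $m = t - m' \sim z\sqrt{t}$ further steps, the row index of the special box converges to $\operatorname{Pois}(z)$, and the joint law of (row index, regular part) is asymptotically a product of $\operatorname{Pois}(z)$ and $\Plancherel_t$.

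Next I would make the connection between the row index of the special box and the growth events $R^{(n)}$ of \cref{sec:plancherel-growth-process-independent} explicit. Initially (at time $m'$) the special box sits in the bottom row, i.e. its row index is $0$. By the definition \eqref{eq:augmented-edge} of the augmented Young graph, the row index of the special box increases by exactly $1$ each time the edge $\Lambda^{(n)}\nearrow\Lambda^{(n+1)}$ is a \emph{bump}, and this happens precisely when the box added to the Young diagram at step $n+1$ lies in the current row of the special box; conversely the special box is unaffected when the added box lies elsewhere. Since the special box climbs one row at a time and starts at row $0$, after $m$ steps its row index $y$ equals the number of steps $n\in\{m'+1,\dots,m'+m\}$ at which a box was added in the row that the special box occupied just before that step. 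Using the abbreviation that we only care whether the new box lands in one of the bottom $k+1$ rows or higher (the random variable $R^{(n)}$ with $\mathcal{N}=\{0,\dots,k,\infty\}$), I would apply \cref{thm:independent-explicit} with $\nzero := m'$ and $\ell := m = O(\sqrt{m'})$: the row-growth data $(R^{(m'+1)},\dots,R^{(m'+m)})$ together with the final diagram $\lambda^{(t)}$ is within total variation distance $o(m/\sqrt{m'}) = o(1)$ of an independent model in which each $\overline{R}^{(m'+i)}$ hits each of the rows $0,\dots,k$ with probability $\tfrac{1}{\sqrt{m'}}$, and $\overline{\lambda}^{(t)}$ is independently $\Plancherel_t$-distributed.

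In the independent model the computation is routine. Conditioned on the special box currently being in row $j \le k$, the probability that the next step is a bump for it is $\tfrac{1}{\sqrt{m'}}$; since these events are independent across the $m$ steps and $m/\sqrt{m'} \to z$ (note $\sqrt{m'} = \sqrt{t-m} \sim \sqrt{t}$ and $m \sim z\sqrt{t}$), the total number of bumps among the $m$ steps is a sum of $m$ independent Bernoulli$(1/\sqrt{m'})$ variables, hence converges to $\operatorname{Pois}(z)$ by the classical Poisson limit theorem — provided $z$ is small enough that the special box stays within the bottom $k+1$ rows; choosing $k$ large (the statement is for \emph{each} $k$, so it is enough to observe that $\PP(\operatorname{Pois}(z) > k) \to 0$ as $k\to\infty$, uniformly controls the error) makes this a non-issue. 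This gives part \ref{item:distribution-fixed-time-A-a}. For parts \ref{item:distribution-fixed-time-A-b} and \ref{item:distribution-fixed-time-A-c}, in the independent model $\overline{\lambda}^{(t)}$ is exactly independent of the $\overline{R}$'s and $\Plancherel_t$-distributed, so the conditional law of $\overline{\lambda}^{(t)}$ given $\{y = k\}$ is literally $\Plancherel_t$, and the joint law of $(\overline{\lambda}^{(t)}, y)$ is literally a product; transporting this back through the $o(1)$ total-variation bound of \cref{thm:independent-explicit} and through the transpose identification (transposition is a measure-preserving bijection of $\diagrams$ and $\Plancherel_t$ is transpose-invariant) yields both claims, after replacing $\operatorname{Pois}(z)$ in the limit and noting that the "truncated" statistic $y$ agrees with the genuine row index on the event $\{y \le k\}$ whose complement has vanishing probability in the limit.

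The main obstacle I anticipate is bookkeeping the transpose identification cleanly — in particular checking that "row index of the special box" in the original process really does correspond to "column index" in the reversed process and that the special box starting in the bottom row corresponds to the special box starting in the leftmost column, which is the correct initial condition for the reversed augmented Plancherel growth process — together with the minor technical point that \cref{thm:independent-explicit} only tracks the bottom $k+1$ rows, so one must pass $k\to\infty$ at the end rather than fixing it; both are handled by the observation that $\operatorname{Pois}(z)$ has light tails, but they require care to state without circularity.
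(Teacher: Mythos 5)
Your core engine (row index of the special box $=$ number of bump steps; approximate the row-growth data together with the final diagram by the independent model of \cref{thm:independent-explicit}; Bernoulli-to-Poisson limit for the bumps; exact independence and Plancherel distribution of $\overline{\lambda}^{(t)}$ giving parts b) and c)) is exactly the right mechanism, and it is what the paper does. But the reduction you wrap around it contains a genuine error: you have the asymptotic regime backwards. Under the hypothesis $\lim_{m\to\infty}\frac{t-m}{\sqrt{t}}=z$ one has $t\sim m$, so $m$ is the \emph{large} parameter and $m':=t-m\sim z\sqrt{t}$ is the \emph{small} one; your assertions ``$m\sim z\sqrt{t}$'', ``$\sqrt{m'}=\sqrt{t-m}\sim\sqrt{t}$'' and ``$\ell:=m=O(\sqrt{m'})$'' are all false. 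Consequently \cref{thm:independent-explicit} cannot be applied with $n_0:=m'$ and $\ell:=m$: the required condition $\ell=O(\sqrt{n_0})$ fails badly ($m/\sqrt{m'}$ is of order $t^{3/4}$, not $z$), so the key total-variation estimate you invoke is unavailable. In addition, the transpose identification itself is carried out with the wrong coordinate: since transposition swaps the coordinates of the special box, $y_m^{(t)}$ has the law of the \emph{column} index $x_{m'}^{(t)}$ of the reversed process, not its row index. Done correctly, your reduction would therefore land on the statement ``column index of the special box of the process initiated at the small time $m'$, observed after about $t\approx (m'/z)^2$ steps, is asymptotically $\operatorname{Pois}(z)$'' --- which is essentially \cref{prop:distribution-fixed-time}, the result that the paper \emph{deduces from} the present proposition via the transpose; so the route is circular with respect to the tools at hand.

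The repair is simply to delete the reversal step: the proposition is already stated in the short-time regime, so run your second and third paragraphs directly on the original process, applying \cref{thm:independent-explicit} with $n_0:=m$ and $\ell:=t-m=O(\sqrt{m})$, for which $\ell/\sqrt{n_0}\to z$; this is precisely the paper's proof (the transpose, via \cref{lem:transpose}, belongs to the proof of \cref{prop:distribution-fixed-time}, not here). One further point deserves more care than your sketch gives it: whether a given step is a bump depends on the current row of the special box, which is itself a function of the past growth data, so in the independent model one must check (as the paper does through \cref{lem:past} and the product computation for the events $A_u$) that the bump indicators are genuinely independent Bernoulli variables with success probability $1/\sqrt{m}$; your phrase ``conditioned on the special box currently being in row $j\leq k$'' gestures at this but does not establish it. Your handling of the truncation at $k$ and of parts b) and c) in the independent model is fine.
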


Let us fix an~integer $k\geq 0$. 
We use the~notations from
\cref{sec:plancherel-growth-process-independent} for $\nzero:=m$ and $\ell=t-m$ so
that $\nzero+\ell=t$; we assume that $m$ is big enough so that $m\geq (k+1)^2$. Our general strategy is to read the~required information
from the~vector $V^{(\nzero)}$ given by \eqref{eq:vn0} and to apply
\cref{thm:independent-explicit}. Before the proof of \cref{prop:distribution-fixed-time-A} we start with the following auxiliary result.

\medskip

For $s\geq m$ we define the random variable $y_{m}^{(s)}\!\downarrow_\mathcal{N} \: \inrv \! \mathcal{N}$ by
\begin{align*} y_{m}^{(s)}\!\downarrow_\mathcal{N} &=
\begin{cases}
    y_{m}^{(s)} & \text{if }  y_{m}^{(s)}\in\{0,1,\dots,k\}, \\
    \infty    & \text{otherwise.}
\end{cases}\\
\intertext{We also define the random variable $F_{s}\inrv\{0,\dots,k\}$  by}
F_{s} &= 
\begin{cases}
y_{m}^{(s)}\!\downarrow_\mathcal{N} & \text{if } y_{m}^{(s)}\!\downarrow_\mathcal{N} \neq \infty,\\
\text{arbitrary element of $\{0,\dots,k\}$} & \text{otherwise}.  
\end{cases}
\end{align*}

\begin{lemma}
    \label{lem:past}
For each $s\geq m$ the value of $F_s$ can~be
expressed as an~explicit function of the~entries of the~sequence $R$ related to the~past, that is
\[    
R^{(m+1)}, \: \dots, \: R^{(s)}.  
\]    

\smallskip

For any integer $p\in\{0,\dots,k\}$
the equality $y_m^{(s)}= p$ holds true if and only if 
there are exactly $p$ values of the~index 
$u\in\{m+1,\dots,s\}$ with the~property that 
\begin{equation} 
    \label{eq:Bernoulli}
    R^{(u)}= F_{u-1}.
\end{equation}

The inequality $y_m^{(s)}>k$ holds if and only if there are at least $k+1$ values of the index \mbox{$u
\in\{m+1,\dots,s\}$} with this property.
\end{lemma}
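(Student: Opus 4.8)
The plan is to unwind the definition of the augmented Plancherel growth process initiated at time $m$ as the lifting (\cref{lem:lifting}) of the ordinary Plancherel growth process, and to track when the special box $\Box_m^{(s)}$ moves up. First I would observe that, by the construction of the augmented Young graph \eqref{eq:augmented-edge}, the edge $\Lambda_m^{(u-1)}\nearrow\Lambda_m^{(u)}$ is a bump (i.e.\ the special box rises by one row) precisely when the new box $\lambda^{(u)}/\lambda^{(u-1)}$ coincides with the current special box $\Box_m^{(u-1)}$. Since the special box starts (at time $m$) in the bottom row $0$ and rises by exactly one row at each bump, after $u-m$ steps the height $y_m^{(u)}$ equals the number of bumps among the steps $m{+}1,\dots,u$. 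Moreover, as long as $y_m^{(u-1)}\le k$, the row index in which the new box is added at step $u$ is recorded faithfully by $R^{(u)}\in\{0,\dots,k,\infty\}$, so a bump at step $u$ happens iff $R^{(u)}$ equals the current height $y_m^{(u-1)}$ — and when $y_m^{(u-1)}\le k$ this common value is exactly $F_{u-1}$ by the definition of $F$. This is the identity \eqref{eq:Bernoulli}.

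Next I would make the \emph{recursive} character of this precise by induction on $s$. The key point is that, before the height reaches $k+1$, the quantity $y_m^{(s)}\!\downarrow_{\mathcal N}$ (hence $F_s$, once an arbitrary tie-breaking choice among $\{0,\dots,k\}$ is fixed in advance) depends only on $R^{(m+1)},\dots,R^{(s)}$: the base case $F_m=0$ (the special box is in the bottom row) needs no data, and the inductive step computes $y_m^{(s)}$ from $y_m^{(s-1)}$ together with $R^{(s)}$ via the rule ``$y_m^{(s)}=y_m^{(s-1)}+1$ if $R^{(s)}=F_{s-1}$ and $y_m^{(s-1)}\le k$, and $y_m^{(s)}=y_m^{(s-1)}$ if instead $R^{(s)}\in\{0,\dots,k,\infty\}\setminus\{F_{s-1}\}$ with $y_m^{(s-1)}\le k$''; once $y_m^{(s-1)}>k$ we already know $y_m^{(s)}>k$, so $y_m^{(s)}\!\downarrow_{\mathcal N}=\infty$ regardless. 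This gives the explicit function of the past claimed in the first assertion.

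The two remaining assertions then follow by counting. For $p\in\{0,\dots,k\}$: if exactly $p$ indices $u\in\{m+1,\dots,s\}$ satisfy \eqref{eq:Bernoulli}, then by the recursion the height increases exactly $p$ times and never exceeds $k$ along the way (so each such step is a genuine bump detected by $F$), whence $y_m^{(s)}=p$; conversely $y_m^{(s)}=p\le k$ forces all $p$ bumps to occur while the height is $\le k$, each registered by \eqref{eq:Bernoulli}. The inequality $y_m^{(s)}>k$ holds iff the height reached $k+1$ at some step $\le s$, which (since it rises in unit steps from $0$) happens iff at least $k+1$ of the steps were bumps, i.e.\ at least $k+1$ indices $u$ satisfy \eqref{eq:Bernoulli}. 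I expect the main obstacle to be purely bookkeeping: carefully formulating the recursion so that the tie-breaking choice in the definition of $F_s$ never interferes (it is only invoked on the event $y_m^{(s)}\!\downarrow_{\mathcal N}=\infty$, where its value is irrelevant to the conclusions), and making sure the ``$R^{(u)}$ records the true row'' step is valid exactly on the range of heights $\le k$ where $R$ is informative.
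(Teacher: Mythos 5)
Your proposal is correct and follows essentially the same argument as the paper: you characterize a bump at step $u$ by the new box landing in the row of the special box, note that while $y_m^{(u-1)}\leq k$ this is detected exactly by $R^{(u)}=F_{u-1}$ with $F_{u-1}=y_m^{(u-1)}$, and then combine a recursion on $s$ (expressing $F_s$ through $R^{(m+1)},\dots,R^{(s)}$) with the bump count; the paper merely presents the counting statements first and the recursion second, and your handling of the truncation at the $(k{+}1)$-st bump and of the irrelevance of the tie-breaking value of $F_s$ matches the paper's reasoning.
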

\begin{proof}
There are \emph{exactly} $y_m^{(s)}$ edges which are bumps in the path
\begin{equation}\label{eq:golden-path}
     \Lambda^{(m)} \nearrow \cdots \nearrow \Lambda^{(s)} 
\end{equation}
because each bump increases the $y$-coordinate of the special box by $1$.
Note that an edge $\Lambda^{(u-1)}\nearrow \Lambda^{(u)}$ in this path is a bump if and only if 
\begin{equation}\label{eq:event}
    \text{the event $E^{(u)}_r$ occurs for the row $r=y_m^{(u-1)}$.}
\end{equation}

If $y_m^{(s)}\leq k$ then for any $u\in\{m+1,\dots,s\}$ the equality $F_u=y_m^{(u)}$
holds true; furthermore the event \eqref{eq:event} occurs if and only if
$R^{(u)}=y_m^{(u-1)}$. It~follows that there are \emph{exactly} $y_m^{(s)}$ values of the~index \mbox{$u
    \in\{m+1,\dots,s\}$} such that 
    \eqref{eq:Bernoulli} holds true.

On the~other hand, if $y_m^{(s)}>k$ we can apply the above reasoning to the
truncation of the path \eqref{eq:golden-path} until after the $(k+1)$-st bump occurs. It follows that in this case there are \emph{at least} $k+1$
values of the~index $u\in\{m+1,\dots,s\}$ with the~property~\eqref{eq:Bernoulli}.
In this way we proved the second part of the lemma.

\medskip

By the second part of the lemma, the~value of $y_m^{(s)}\!\downarrow_\mathcal{N}$ can~be
expressed as an~explicit function of both (i) the~previous values
\begin{equation}
\label{eq:previous}
y_m^{(m)}\!\downarrow_\mathcal{N},\: \dots, \: y_m^{(s-1)}\!\downarrow_\mathcal{N},
\end{equation}
\emph{and} (ii) the~entries of the~sequence $R$ related to the~past, that is
\begin{equation}
\label{eq:previousR}
R^{(m+1)}, \: \dots, \: R^{(s)}. 
\end{equation}
By iteratively applying this observation to the~previous values
\eqref{eq:previous} it is possible to express the~value of 
$y_m^{(s)}\!\downarrow_\mathcal{N}$ \emph{purely}
in terms of \eqref{eq:previousR}.
Also the~value of 
\[F_{s} = F_{s}\left(
R^{(m+1)}, \: \dots, \:  R^{(s)}\right)\]
can~be expressed as a~function of the~entries of the~sequence $R$ related to~the~past, as required. 
\end{proof}		
 
 \newcommand{\event}{A}
        
\begin{proof}[Proof of \cref{prop:distribution-fixed-time-A}]
\cref{lem:past} shows that the~event $y_{m}^{(t)}=k$ can~be expressed in
terms of the~vector $V^{(\nzero)}$ given by \eqref{eq:vn0}. We apply
\cref{thm:independent-explicit}; it follows that the~probability $\PP\left\{
y_{m}^{(t)}=k \right\}$ is equal, up to an~additive error term $o(1)$, 
to~the~probability that there are exactly
$k$ values of the~index \mbox{$u\in\{m+1,\dots,t\}$} with the~property that
\begin{equation} 
\label{eq:Bernoulli2}
\overline{R}^{(u)}= F_{u-1}\left(
\overline{R}^{(m+1)},\dots,\overline{R}^{(u-1)}\right).
\end{equation}
We denote by $\event_u$ the random event that the equality \eqref{eq:Bernoulli2} holds true.

\medskip

Let $i_1<\dots<i_{l}$ be an increasing sequence of integers from the set $\{m+1, \dots, t\}$ for $l\geq 1$.
We will show that
\begin{equation}
    \label{eq:independence-day}
     \PP\left( \event_{i_1} \cap \cdots \cap \event_{i_{l}} \right) =  \frac{1}{\sqrt{m}}\ \PP\left( \event_{i_1} \cap \cdots \cap \event_{i_{l-1}} \right).
\end{equation}
Indeed, by \cref{lem:past}, the event $\event_{i_1} \cap \cdots \cap \event_{i_{l-1}}$ is a disjoint finite union of some random events
of the form 
\[ B_{r_{m+1},\dots,r_{j}}=
    \left\{  \overline{R}^{(m+1)}=r_{m+1},\; \overline{R}^{(m+2)}=r_{m+2},\; \dots ,\; \overline{R}^{(j)}=r_{j} \right\} \]
over some choices of $r_{m+1},r_{m+2},\dots,r_{j}\in \mathcal{N}$, where $j:=i_l-1$.
Since the random variables $\big( \overline{R}^{(i)} \big)$ are independent, it follows that
\begin{equation}
\label{eq:problem0101} 
\PP \left( B_{r_{m+1},\dots,r_{j}} \cap \event_{i_l} \right) = \frac{1}{\sqrt{m}}\ \PP \left( B_{r_{m+1},\dots,r_{j}}  \right).
\end{equation}
By summing over the appropriate values of $r_{m+1},\dots,r_{j}\in
\mathcal{N}$ the equality~\eqref{eq:independence-day} follows.

By iterating \eqref{eq:independence-day} it follows that the events $\event_{m+1},\dots, \event_{t}$ are independent
and each has equal probability $\frac{1}{\sqrt{m}}$.

\medskip

By the Poisson limit theorem \cite[Theorem~3.6.1]{Durrett2010}
the~probability of $k$ successes in $\ell$ Bernoulli
trials as above converges to the~probability of the~atom $k$ in the Poisson
distribution with the~intensity parameter equal to
\[ \lim_{m \to\infty} \frac{\ell}{\sqrt{m}} = 
\lim_{m \to\infty} \frac{t-m}{\sqrt{t}}=
z\]
which concludes
the~proof of part \ref{item:distribution-fixed-time-A-a}.

\medskip

The~above discussion also shows that the~conditional probability distribution
considered in point \ref{item:distribution-fixed-time-A-b} is equal to the
conditional probability distribution of the~last coordinate $\lambda^{(t)}$ of
the~vector $V^{(\nzero)}$ under certain condition which is expressed in terms of
the~coordinates $R^{(m+1)},\dots,R^{(t)}$. 
By~\cref{thm:independent-explicit} this conditional probability distribution is in
the~distance $o(1)$ (with respect to the~total variation distance) to its
counterpart for the~random vector $\overline{V}^{(\nzero)}$. The~latter
conditional probability distribution, due to the~independence of the
coordinates of $\overline{V}^{(\nzero)}$, is equal to the~Plancherel measure
$\Plancherel_t$, which concludes the~proof
of~\ref{item:distribution-fixed-time-A-b}.

\medskip

Part \ref{item:distribution-fixed-time-A-c} is a~direct consequence of parts 
\ref{item:distribution-fixed-time-A-a} and
\ref{item:distribution-fixed-time-A-b}. 
\end{proof}

For an augmented Young diagram $\Lambda=\big(\lambda, (x,y) \big)$ we define its
\emph{transpose} $\Lambda^T=\big( \lambda^T, (y,x) \big)$.
\begin{lemma}
    \label{lem:transpose}
    For any integers $m,m'\geq 0$ the probability distributions at time $t=m+m'$ of the augmented Plancherel growth processes initiated at times $m$ and $m'$ respectively are related by
    \[ \Lambda_m^{(t)} \overset{d}{=} \left[ \Lambda_{m'}^{(t)}  \right]^T. \]
\end{lemma}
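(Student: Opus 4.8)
The plan is to derive the statement from the classical symmetry of the Robinson--Schensted correspondence under reversal of the input word: for a word $w=(w_1,\dots,w_N)$ with pairwise distinct entries and its reversal $w^r:=(w_N,\dots,w_1)$ one has $P(w^r)=\bigl(P(w)\bigr)^T$, see \cite{Schuetzenberger1963,Fulton1997}. First I would apply this to
\[ w=(\xi_1,\dots,\xi_m,\infty,\xi_{m+1},\dots,\xi_t), \qquad t=m+m', \]
whose entries are almost surely pairwise distinct (the $\xi_i$ are continuously distributed, hence a.s.\ distinct, and $\infty$ differs from all of them); by the definition of the augmented Plancherel growth process initiated at time~$m$ we have $\Lambda_m^{(t)}=\operatorname{sh}^* P(w)$.

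Next I would record the elementary fact that forming the augmented shape commutes with transposition: for any tableau $\mathcal{T}$ containing exactly one entry equal to $\infty$,
\[ \operatorname{sh}^*\bigl(\mathcal{T}^T\bigr)=\bigl(\operatorname{sh}^*\mathcal{T}\bigr)^T, \]
because transposing $\mathcal{T}$ transposes the regular part $\operatorname{sh}\bigl(\mathcal{T}\setminus\{\infty\}\bigr)$ and interchanges the two coordinates of $\Pos_\infty\mathcal{T}$. Combined with the reversal symmetry this yields
\[ \operatorname{sh}^* P(w^r)=\operatorname{sh}^*\bigl((P(w))^T\bigr)=\bigl(\operatorname{sh}^* P(w)\bigr)^T=\bigl[\Lambda_m^{(t)}\bigr]^T. \]

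It then remains to identify the law of the left-hand side. Spelling out $w^r=(\xi_t,\dots,\xi_{m+1},\infty,\xi_m,\dots,\xi_1)$, the symbol $\infty$ sits at position $(t-m)+1=m'+1$ and the remaining entries, read from left to right, are $\xi_t,\xi_{t-1},\dots,\xi_1$; hence, setting $\eta_i:=\xi_{t+1-i}$, we have $w^r=(\eta_1,\dots,\eta_{m'},\infty,\eta_{m'+1},\dots,\eta_t)$, so that $\operatorname{sh}^* P(w^r)=\operatorname{sh}^* P(\eta_1,\dots,\eta_{m'},\infty,\eta_{m'+1},\dots,\eta_t)$. Since $(\xi_1,\dots,\xi_t)$ is i.i.d.\ and hence exchangeable, its reversal $(\eta_1,\dots,\eta_t)=(\xi_t,\dots,\xi_1)$ has the same distribution; applying the fixed measurable map $(a_1,\dots,a_t)\mapsto\operatorname{sh}^* P(a_1,\dots,a_{m'},\infty,a_{m'+1},\dots,a_t)$ we obtain $\operatorname{sh}^* P(w^r)\overset{d}{=}\Lambda_{m'}^{(t)}$. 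Combining this with the display above gives $\bigl[\Lambda_m^{(t)}\bigr]^T\overset{d}{=}\Lambda_{m'}^{(t)}$, and since $\Lambda\mapsto\Lambda^T$ is an involution of $\Augmented$ this is equivalent to $\Lambda_m^{(t)}\overset{d}{=}\bigl[\Lambda_{m'}^{(t)}\bigr]^T$, as claimed. I do not anticipate a genuine obstacle here: the only input that is not completely elementary is the reversal--transposition symmetry for insertion tableaux, which is classical, while the rest is bookkeeping with indices together with the exchangeability of the i.i.d.\ sequence.
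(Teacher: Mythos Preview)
Your argument is correct and follows essentially the same route as the paper: both proofs hinge on the classical identity $P(w^r)=\bigl(P(w)\bigr)^T$ applied to the word $w=(\xi_1,\dots,\xi_m,\infty,\xi_{m+1},\dots,\xi_t)$, followed by exchangeability of the i.i.d.\ sequence. The only cosmetic difference is that the paper invokes Greene's theorem for this reversal--transposition symmetry, whereas you cite Sch\"utzenberger and Fulton directly; your write-up is also a bit more explicit about the commutation of $\operatorname{sh}^*$ with transposition and the index bookkeeping.
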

\begin{proof}
    Without loss of generality we may assume that the random variables
$\xi_1,\dots,\xi_t$ are \emph{distinct} real numbers. An application of
Greene's theorem \cite[Theorem~3.1]{Greene1974} shows that the insertion tableaux
which correspond to a given sequence of distinct numbers and this sequence
read backwards
   \begin{multline*} P(\xi_1,\dots,\xi_m,\infty,\xi_{m+1},\dots,\xi_t) = \\
\left[ P( \xi_t,\xi_{t-1},\dots,\xi_{m+1}, \infty, \xi_{m},\xi_{m-1}, \dots, \xi_1 )   \right]^T 
\end{multline*}
are transposes of one another. It follows that also the augmented shapes are transposes of one another:
\begin{multline*} 
\Lambda_m^{(t)}=    
    \operatorname{sh}^* P(\xi_1,\dots,\xi_m,\infty,\xi_{m+1},\dots,\xi_t) = \\
\Big[ \operatorname{sh}^*  P( \underbrace{ \xi_t,\xi_{t-1},\dots,\xi_{m+1}}_{\text{$m'$ entries}}, \infty, \xi_{m},\xi_{m-1}, \dots, \xi_1 )     \Big]^T. 
\end{multline*} 
Since the sequence $(\xi_i)$ and its any permutation $\left(\xi_{\sigma(i)} \right)$ have the same distributions, the right-hand side has
the same probability distribution as $\left[ \Lambda_{m'}^{(t)}  \right]^T$, as
required.
\end{proof}

\begin{proposition}
    \label{prop:distribution-fixed-time}
    Let $z>0$ be a~fixed real number. 
Let $t=t(m)$ be a~sequence of positive integers such that $t(m) \geq m$ and
with the~property that
\[ \lim_{m\to\infty} \frac{m}{\sqrt{t}} = z. \]
Let $\Lambda_m^{(m)}\nearrow \Lambda_m^{(m+1)} \nearrow \cdots$ be the augmented Plancherel growth process initiated at time~$m$. 
We denote $\Lambda_m^{(t)}=\big(\lambda^{(t)},
\Box_m^{(t)}\big)$; let $\Box_m^{(t)}=\big(x_m^{(t)}, y_m^{(t)}\big)$ be the
coordinates of the special box at time $t$.
    
    The~total variation distance between
   \begin{itemize}
    \item 
   the~probability distribution of the~random vector  
    \begin{equation} 
    \label{eq:xmlambda}
    \left(  x_m^{(t)}, \lambda^{(t)}     \right) \in \N_0\times \diagrams 
    \end{equation}
    and 
    \item 
     the~product measure
			\[ \operatorname{Pois}(z) \times \Plancherel_t   \]   
     \end{itemize}
 converges to $0$, as $m\to\infty$. 
\end{proposition}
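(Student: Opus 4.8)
The plan is to derive the statement from \cref{prop:distribution-fixed-time-A} by exploiting the~transpose symmetry recorded in \cref{lem:transpose}. First I would set $m':=t-m$, so that $t=m+m'$; the~hypothesis $t(m)\geq m$ gives $m'\geq 0$, while $m/\sqrt{t}\to z>0$ forces $\sqrt{t}\sim m/z$, hence $t\to\infty$ and therefore $m'=t-m\to\infty$ as $m\to\infty$. The~key observation is that the~augmented Plancherel growth process $\Lambda_{m'}^{(m')}\nearrow\Lambda_{m'}^{(m'+1)}\nearrow\cdots$ initiated at time $m'$ satisfies the~hypotheses of \cref{prop:distribution-fixed-time-A} (with $m$ there replaced by $m'$, and the~same time parameter $t$), because
\[ \lim_{m\to\infty}\frac{t-m'}{\sqrt{t}}=\lim_{m\to\infty}\frac{m}{\sqrt{t}}=z. \]

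Writing $\Lambda_{m'}^{(t)}=\big(\mu^{(t)},(x_{m'}^{(t)},y_{m'}^{(t)})\big)$ for the~value at time $t$ of this process, I would then invoke \cref{prop:distribution-fixed-time-A}, part~\ref{item:distribution-fixed-time-A-c}, which tells us that the~law of the~pair $\big(\mu^{(t)},y_{m'}^{(t)}\big)\in\diagrams\times\N_0$ is at total variation distance $o(1)$ from the~product measure $\Plancherel_t\times\operatorname{Pois}(z)$. Next I would transport this estimate along the~bijection $\diagrams\times\N_0\ni(\nu,k)\mapsto(k,\nu^T)\in\N_0\times\diagrams$: total variation distance is invariant under such a~relabelling, and the~Plancherel measure is invariant under conjugation $\nu\mapsto\nu^T$ of Young diagrams (e.g.\ because $f^{\nu}=f^{\nu^T}$, or because reversing a~uniformly random permutation transposes its RSK shape). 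Hence the~law of $\big(y_{m'}^{(t)},(\mu^{(t)})^T\big)$ is at total variation distance $o(1)$ from $\operatorname{Pois}(z)\times\Plancherel_t$.

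To conclude, I would apply \cref{lem:transpose} with the~roles of $m$ and $m'$ as above, which gives $\Lambda_m^{(t)}\overset{d}{=}\big[\Lambda_{m'}^{(t)}\big]^T$; reading off the~regular part and the~$x$-coordinate of the~special box on both sides yields the~joint identity in law $\big(x_m^{(t)},\lambda^{(t)}\big)\overset{d}{=}\big(y_{m'}^{(t)},(\mu^{(t)})^T\big)$, and combining this with the~previous paragraph finishes the~proof. I do not anticipate a~serious obstacle: all the~analytic content is already contained in \cref{prop:distribution-fixed-time-A}, and the~only points that demand attention are the~coordinate bookkeeping under the~transpose, the~conjugation-invariance of $\Plancherel_t$, and the~(routine) check that $m'\to\infty$ so that the~cited proposition really does apply along the~sequence $m\mapsto m'$.
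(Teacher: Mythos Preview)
Your proposal is correct and follows essentially the same route as the paper: apply \cref{lem:transpose} with $m':=t-m$ to identify the law of $\big(x_m^{(t)},\lambda^{(t)}\big)$ with that of $\big(y_{m'}^{(t)},[\lambda^{(t)}]^T\big)$, invoke \cref{prop:distribution-fixed-time-A}\ref{item:distribution-fixed-time-A-c} for the process initiated at time $m'$, and push forward along the bijection $(y,\nu)\mapsto(y,\nu^T)$ using the transposition-invariance of $\Plancherel_t$. Your explicit verification that $m'\to\infty$ is a small extra care that the paper leaves implicit.
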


\begin{proof}
By \cref{lem:transpose} the probability distribution of \eqref{eq:xmlambda} coincides with the probability distribution of \begin{equation}
    \label{eq:magictranspose}
    \left( y_{m'}^{(t)}, \big[ \lambda^{(t)} \big]^T \right)
\end{equation} for $m':=t-m$.
The random vector \eqref{eq:magictranspose} can be viewed as the image of the vector $ \big( y_{m'}^{(t)},  \lambda^{(t)}  \big)$ under the bijection 
\[\operatorname{id}\times T: (y,\lambda)\mapsto (y,\lambda^T).\]
By \cref{prop:distribution-fixed-time-A} it follows that
the total variation distance 
between \eqref{eq:magictranspose} and the push-forward measure
\[ \left(\operatorname{id}\times T \right)\left( \operatorname{Pois}(z) \times \Plancherel_t \right) =  \operatorname{Pois}(z) \times \Plancherel_t  \]
converges to zero as $m\to\infty$;
the last equality holds since the Plancherel measure is invariant under transposition.
 \end{proof}

\subsection{Lazy version of \cref{prop:m=1}. Proof of \cref{prop:is-finite}}
\label{sec:proof-prop:is-finite}

In \cref{sec:in-which-row} we parametrized the~shape of the bumping route by
the~sequence $Y_0,Y_1,\dots$ which gives \emph{the~number of the~row} in which the
bumping route reaches a~specified column, cf.~\eqref{eq:Y}. 
With the help of \cref{prop:lazy-traj-correspondence} we can define 
the~lazy counterpart of these quantities:
for $x,m\in\N_0$ we denote by 
\[ T_x^{[m]}=T_x= \min\left\{ t : x_m^{(t)}\leq x \right\} \]
the~\emph{time} it takes for the~bumping route (in the~lazy parametrization) to
reach the~specified column.

The~following result is the~lazy version of \cref{prop:m=1}.
\begin{lemma}
    \label{lem:lazy:m=1}
    For each integer $m\geq 1$
    \[ \lim_{u\to\infty} 
    \sqrt{u} \  \PP\left\{ T_0^{[m]} > u\right\}  = m. 
    \]   
\end{lemma}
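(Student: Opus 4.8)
The plan is to combine the transpose symmetry of \cref{lem:transpose} with the asymptotic independence supplied by \cref{thm:independent-explicit}, running the argument behind \cref{prop:distribution-fixed-time-A} in the degenerate regime that corresponds to the parameter value $z=0$.

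First I would reduce the probability in question. Since the bumping route, hence also its lazy parametrization, moves weakly to the left, the map $t\mapsto x_m^{(t)}$ is weakly decreasing, so the event $\bigl\{T_0^{[m]}>u\bigr\}$ coincides with $\bigl\{x_m^{(u)}\geq 1\bigr\}$. Applying \cref{lem:transpose} with $m':=u-m$ (so that $t=m+m'=u$) and recalling that transposing an augmented Young diagram interchanges the two coordinates of the special box, we obtain $x_m^{(u)}\overset{d}{=}y_{m'}^{(u)}$. Thus the assertion is equivalent to
\[ \lim_{u\to\infty}\sqrt{u}\ \PP\bigl\{y_{m'}^{(u)}\geq 1\bigr\}=m,\qquad m'=u-m. \]

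Next I would analyze $y_{m'}^{(u)}$ along the lines of the proof of \cref{prop:distribution-fixed-time-A}, with the two time parameters swapped: I regard $y_{m'}^{(u)}$ as the value at time $u$ of the augmented Plancherel growth process \emph{initiated at time $m'$}, observed after only $\ell:=u-m'=m$ additional steps. I would use the notation of \cref{sec:plancherel-growth-process-independent} with $\nzero:=m'$, with this $\ell=m$, and with $k:=0$, so that $\mathcal{N}=\{0,\infty\}$ (for $u>m$ one has $\nzero=m'\geq 1=(k+1)^2$, as required). Since $k=0$ makes $F_s$ identically $0$, \cref{lem:past} identifies $\bigl\{y_{m'}^{(u)}\geq 1\bigr\}$ with the event that $R^{(u')}=0$ for at least one index $u'\in\{m'+1,\dots,u\}$; in particular this event depends only on the vector $V^{(\nzero)}$ of \eqref{eq:vn0}. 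By \cref{thm:independent-explicit} its probability differs by at most $\delta\bigl(V^{(\nzero)},\overline V^{(\nzero)}\bigr)=o\!\bigl(\ell/\sqrt{\nzero}\bigr)=o\!\bigl(m/\sqrt{m'}\bigr)$ from the corresponding probability in the independent model, where the events $\bigl\{\overline R^{(m'+i)}=0\bigr\}$ for $1\le i\le m$ are independent with probability $1/\sqrt{m'}$ each; that probability equals $1-\bigl(1-1/\sqrt{m'}\bigr)^{m}=m/\sqrt{m'}+O(1/m')$ for $m$ fixed and $m'\to\infty$. Because $m$ is fixed, every error term is $o(1/\sqrt{m'})$, so $\PP\bigl\{y_{m'}^{(u)}\geq 1\bigr\}=\tfrac{m}{\sqrt{m'}}\bigl(1+o(1)\bigr)$; multiplying by $\sqrt{u}$ and using $\sqrt{u}/\sqrt{m'}=\sqrt{u/(u-m)}\to 1$ yields the limit $m$.

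I do not expect a genuine obstacle: the lemma is in essence a re-assembly of \cref{lem:transpose}, \cref{lem:past} and \cref{thm:independent-explicit}. The points that require a little care are the bookkeeping of the transpose (which converts the $x$-coordinate of the process initiated at time $m$ into the $y$-coordinate of the process initiated at time $m'=u-m$) and the verification that, with the number $\ell=m$ of extra steps held constant while $\nzero=m'\to\infty$, the total variation error $o(\ell/\sqrt{\nzero})$ coming from \cref{thm:independent-explicit} is genuinely negligible next to the leading term $m/\sqrt{m'}$.
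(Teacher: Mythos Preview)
Your proposal is correct and follows essentially the same route as the paper: reduce via the monotonicity of $t\mapsto x_m^{(t)}$ and \cref{lem:transpose} to $\PP\{y_{u-m}^{(u)}\geq 1\}$, then invoke the setup of \cref{sec:plancherel-growth-process-independent} with $k=0$, $\nzero=u-m$, $\ell=m$ and apply \cref{thm:independent-explicit} to compare with $m$ independent Bernoulli trials of success probability $1/\sqrt{\nzero}$. The only cosmetic difference is that the paper singles out the case $m=1$ (where the probability equals $\PP(E_0^{(u)})$ and \cref{prop:asymptotic-probability} applies directly), whereas you run the general argument uniformly; your invocation of \cref{lem:past} makes explicit what the paper states in passing.
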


\begin{proof}
	By \cref{lem:transpose}, for any $u\in \N_0$
	  \[ \PP\left\{ T_0^{[m]} > u\right\}= 
		\PP \left\{ x_m^{(u)} \geq 1 \right\} = 
		   \PP\left\{ y^{(u)}_{u-m} \geq 1 \right\}.\]
   
\medskip
   
	In the~special case $m=1$ the~proof is particularly easy: the~right-hand
side is equal to $\PP \! \left( E^{(u)}_0 \right)$ and
\cref{prop:asymptotic-probability} provides the~necessary asymptotics.
    
\medskip

    For the~general case $m\geq 1$ we use the~notations from
\cref{sec:plancherel-growth-process-independent} for $k=0$, and $\nzero=u-m$,
and $\ell=m$. The~event $y^{(u)}_{u-m} \geq 1$ occurs if and only if at
least one of the~numbers
$R^{(\nzero+1)},\dots,R^{(\nzero+\ell)}$ is equal to $0$.
We apply \cref{thm:independent-explicit}; it follows that the~probability
of the~latter event is equal, up to an~additive error term of the~order
$o \! \left(\frac{m}{\sqrt{u-m}}\right) = o \! \left(\frac{1}{\sqrt{u}}\right)$,
to the~probability that in $m$~Bernoulli
trials with success probability $\frac{1}{\sqrt{\nzero}}$ there is at least
one success. In~this way we proved that
\[  \PP\left\{ y^{(u)}_{u-m} \geq 1 \right\} = 
       \frac{m}{\sqrt{u}} + o\left( \frac{1}{\sqrt{u}} \right),\]
as desired.
\end{proof}

\begin{proof}[Proof of \cref{prop:is-finite}]
Since $Y_0^{[m]}\geq Y_1^{[m]}\geq \cdots $ is a~weakly decreasing sequence, it is enough
to consider the~case $x=0$. We apply \cref{lem:lazy:m=1} in the~limit
$u\to\infty$. It follows that the~probability that the~bumping route
$\tableau\leftsquigarrow m+\nicefrac{1}{2}$ does not reach the~column with the
index $0$ is equal to
  \[ \lim_{u \to\infty}  \PP\left\{ T_0^{[m]} \geq u\right\}  = 0, 
\]     
as required.
\end{proof}

\section{Transition probabilities for the~augmented Plancherel growth process}

Our main result in this section is \cref{thm:lazy-poisson}. It will be the~key
tool for proving the~main results of the~current paper.

\subsection{Approximating Bernoulli distributions by linear combinations of
    Poisson distributions}

\newcommand{\measurePoisson}{\nu}

The~following \cref{lem:poisson-binom} is a~technical result which will be
necessary later in the~proof of \cref{prop:transition-augmented}. Roughly
speaking, it gives a~positive answer to the~following question: \emph{for a
    given value of $k\in\N_0$, can the point measure $\delta_k$ be approximated by
    a~linear combination of the Poisson distributions in some explicit, constructive
    way?} 
		A~naive approach to this problem would be to consider a~scalar multiple
of the Poisson distribution $e^z \operatorname{Pois}(z)$ which corresponds to the
sequence of weights
\[  \N_0\ni m \mapsto \frac{1}{m!} z^m\]
and then to consider its $k$-th derivative with respect to the~parameter $z$
for~\mbox{$z=0$}. This is not exactly a~solution to the~original question (the
derivative is not a~linear combination), but since the~derivative can~be
approximated by the~forward difference operator, this naive approach gives a
hint that an~expression such as \eqref{eq:defmukph} in the~special case $p=1$
might be, in fact, a~good answer.

\begin{lemma}
    \label{lem:poisson-binom}
Let us fix an~integer $k\geq 0$ and a~real number $0\leq p\leq 1$. 
For~each~$h>0$ the~linear combination of the Poisson distributions
\begin{equation} 
\label{eq:defmukph}
\measurePoisson_{k,p,h}:=  \frac{1}{\left( e^h -1 \right)^k} \sum_{0\leq j \leq k} 
(-1)^{k-j} \binom{k}{j} e^{j h} \operatorname{Pois}(p j h)
\end{equation}
is a~probability measure on $\N_0$.

As $h\to 0$, the~measure $\measurePoisson_{k,p,h}$ converges (in the~sense of total
variation distance) to the~binomial  distribution $\operatorname{Binom}(k,p)$.
\end{lemma}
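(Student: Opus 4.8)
The plan is to prove the two assertions separately, first that $\measurePoisson_{k,p,h}$ is a genuine probability measure on $\N_0$, and then that it converges in total variation to $\operatorname{Binom}(k,p)$ as $h\to 0$.

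For the first assertion, I would begin by showing that each atom $\measurePoisson_{k,p,h}(\{n\})$ is nonnegative. Writing out the Poisson weights, we have for $n\in\N_0$
\[ \measurePoisson_{k,p,h}(\{n\}) = \frac{e^{-0}}{(e^h-1)^k} \cdot \frac{1}{n!}\sum_{0\leq j\leq k} (-1)^{k-j}\binom{k}{j} e^{jh} e^{-pjh} (pjh)^n, \]
so up to the positive prefactor $\frac{1}{n!(e^h-1)^k}$ and the substitution $q := e^{(1-p)h} > 0$, $c := (ph)^n \geq 0$, the sign of the atom is governed by $\sum_j (-1)^{k-j}\binom{k}{j} q^j j^n = (q\,\tfrac{d}{dq})^n (q-1)^k$, evaluated at our specific $q$. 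The key combinatorial fact is that the operator $\theta := q\frac{d}{dq}$ applied $n$ times to $(q-1)^k$ yields, for $q\geq 1$, a polynomial in $q$ with nonnegative values; more robustly, one can expand $(q-1)^k = \sum_{i}\binom{k}{i}(-1)^{k-i} q^i$ is the wrong direction — instead use that $q^j = (1+(q-1))^j$ and re-expand, or simply invoke the representation of $\sum_j (-1)^{k-j}\binom{k}{j} e^{jh} f(j)$ as the $k$-th finite difference $(\Delta^k f)(0)$ where $\Delta$ acts on the sampled function $j\mapsto e^{jh} \operatorname{Pois}(pjh)(\{n\})$ only through the generating-function structure. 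A cleaner route: recall that $\sum_{j=0}^k (-1)^{k-j}\binom{k}{j} x^j y^{k-j}$-type sums arise from $(x-y)^k$, and here the generating function over $n$ of the bracketed expression is $\sum_n \frac{(pjh)^n}{n!}(\cdots) = e^{pjh}$, so $\sum_n z^n \measurePoisson_{k,p,h}(\{n\})$ — actually the probability generating function — equals $\frac{1}{(e^h-1)^k}\sum_j (-1)^{k-j}\binom{k}{j} e^{jh} e^{pjh(z-1)} = \frac{(e^h e^{ph(z-1)} - 1)^k}{(e^h-1)^k} = \left(\frac{e^{ph(z-1)+h}-1}{e^h-1}\right)^k$. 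I would verify this generating-function identity by the binomial theorem, check it equals $1$ at $z=1$ (confirming total mass $1$), and then argue nonnegativity of coefficients: at $z=0$ the base is $\frac{e^{h(1-p)}-1}{e^h-1}\in(0,1)$ for $p\in[0,1]$, $h>0$, and more generally the power series $\left(\frac{e^{ah}-1}{e^h-1}\right)^k$ with $0\leq a\leq 1$... hmm, one needs nonnegativity of all coefficients of $g(z)^k$ where $g(z) = \frac{e^{ph(z-1)+h}-1}{e^h-1}$; since $g$ is (a constant times) $e^{phz}$ minus a constant, and more usefully $e^{h}e^{ph(z-1)}-1$ has nonnegative Taylor coefficients in $z$ once we check the constant term $e^{h(1-p)}-1\geq 0$, the $k$-th power does too, and dividing by the positive constant $(e^h-1)^k$ preserves this. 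So $\measurePoisson_{k,p,h}$ is a probability measure.

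For the convergence, the generating-function identity makes it transparent: as $h\to 0$, $\frac{e^{ph(z-1)+h}-1}{e^h-1} \to \frac{ph(z-1)+h+o(h)}{h+o(h)} = p(z-1)+1 = 1-p+pz$, so the probability generating function of $\measurePoisson_{k,p,h}$ tends to $(1-p+pz)^k$, the pgf of $\operatorname{Binom}(k,p)$. This gives convergence in distribution, but the lemma claims the stronger total variation convergence. Since all measures involved are supported on $\N_0$ and the limit $\operatorname{Binom}(k,p)$ has finite support $\{0,\dots,k\}$, I would upgrade as follows: convergence of pgfs on a neighborhood of $z=0$ (indeed everywhere, as entire functions) implies coefficientwise convergence $\measurePoisson_{k,p,h}(\{n\}) \to \binom{k}{n}p^n(1-p)^{k-n}$ for each fixed $n$; combined with the fact that total mass is exactly $1$ for every $h$ and the limit also has mass $1$, Scheffé's lemma (for discrete measures: pointwise convergence of densities plus equal total mass implies $\ell^1$ convergence) yields $\|\measurePoisson_{k,p,h} - \operatorname{Binom}(k,p)\|_{\ell^1}\to 0$, i.e. total variation convergence.

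The main obstacle I anticipate is establishing nonnegativity of the atoms cleanly — the raw alternating sum $\sum_j(-1)^{k-j}\binom{k}{j}e^{jh}\operatorname{Pois}(pjh)(\{n\})$ does not have manifestly nonnegative terms, and a direct finite-difference argument requires knowing the sampled sequence is completely monotone or has the right convexity. The generating-function factorization $\left(\tfrac{e^{ph(z-1)+h}-1}{e^h-1}\right)^k$ is the crucial trick that sidesteps this: it reduces nonnegativity of the coefficients to the single observation that $e^{h}e^{ph(z-1)}-1 = (e^{h(1-p)}-1) + e^{h(1-p)}\sum_{n\geq 1}\frac{(ph)^n}{n!}z^n$ has all coefficients $\geq 0$ when $0\le p\le 1$. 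Everything else — the total-mass computation, the limit of the pgf, and the Scheffé upgrade to total variation — is routine once this identity is in hand, so I would state and prove the identity first and treat it as the heart of the argument.
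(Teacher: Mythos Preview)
Your argument is correct but follows a different route from the paper. The paper first treats the case $p=1$ separately: by interpreting the alternating sum as a $k$-th forward difference and expanding $x^m$ in falling factorials, it obtains the closed form
\[
\nu_{k,1,h}(m)=\frac{h^m k!}{(e^h-1)^k\, m!}\,\stirling{m}{k},
\]
from which nonnegativity and the convergence $\nu_{k,1,h}\to\delta_k$ are immediate. It then reduces general $p$ to this case via the probabilistic \emph{thinning} operator $C_p$ (compound binomial), using $C_p[\operatorname{Pois}(\lambda)]=\operatorname{Pois}(p\lambda)$ to get $C_p[\nu_{k,1,h}]=\nu_{k,p,h}$ and $C_p[\delta_k]=\operatorname{Binom}(k,p)$. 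Your approach instead computes the probability generating function of $\nu_{k,p,h}$ directly for all $p$ as
\[
\left(\frac{e^{h(1-p+pz)}-1}{e^h-1}\right)^k,
\]
reads off total mass $1$ at $z=1$, obtains nonnegativity from the observation that $e^{h(1-p)}e^{phz}-1$ has nonnegative Taylor coefficients in $z$ (hence so does its $k$-th power), and gets the total variation limit by taking $h\to0$ in the pgf and invoking Scheff\'e. Your route is more uniform (no case split on $p$) and arguably more elementary; the paper's route yields the explicit Stirling-number formula for the atoms and the pleasant probabilistic interpretation via thinning.
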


\begin{proof}

\emph{The~special case $p=1$.}
For a~function $f$ on the~real line 
we consider its \emph{forward difference} function $\Delta[f]$ 
 given by
\[ \Delta[f] (x) = f(x+1)- f(x).\]
It follows that the~iterated forward difference is given by
\[ \Delta^k[f] (x) = \sum_{0\leq j\leq k} (-1)^j
\binom{k}{j} 
 f(x+k-j).\]
  
A~priori, $\measurePoisson_{k,1,h}$ is a~signed measure with the~total mass equal to
\begin{equation}  
\label{eq:total-mass}
\frac{1}{\left( e^h -1 \right)^k} \sum_{0\leq j \leq k} 
(-1)^{k-j} \binom{k}{j} e^{j h} = 
 \frac{1}{\left( e^h -1 \right)^k} \Delta^k \left[e^{hx}\right](0).
\end{equation}
The~right-hand side of \eqref{eq:total-mass} is equal to $1$,
since the~forward difference of an~exponential function is again an~exponential:
\[ 
\Delta \left[e^{hx}\right] = \left( e^h -1 \right) e^{h x}. 
\]

The~atom of $\measurePoisson_{k,1,h}$ at an~integer $m\geq 0$ is equal to
\begin{multline*} 
\measurePoisson_{k,1,h}(m)= 
    \frac{1}{\left( e^h -1 \right)^k m!} \sum_{0\leq j \leq k} (-1)^{k-j} \binom{k}{j} (j h)^m = \\
\frac{h^m}{\left( e^h -1 \right)^k m!} \Delta^k \left[ x^m \right](0).
\end{multline*}
Note that the~monomial $x^m$ can~be expressed in terms of the~falling
factorials~$x^{\underline{p}}$ with the~coefficients given by the Stirling numbers
of the~second kind:
\[ x^m = \sum_{0\leq p\leq m} \stirling{m}{p} x^{\underline{p}} \: , \]
hence
\[ \Delta^k \left[ x^m\right] = \sum_{p} \stirling{m}{p} \Delta^k \left[ x^{\underline{p}} \right] =
\sum_{p \geq k} \stirling{m}{p} p^{\underline{k}}\  x^{\underline{p-k}}.
\]
When we evaluate the~above expression at $x=0$, there is only one non-zero
summand
\[ \Delta^k \left[ x^m\right] (0) =
 \stirling{m}{k} k!.
\]
Thus 
\[
\measurePoisson_{k,1,h}(m)= 
\frac{h^m k!}{\left( e^h -1 \right)^k m!} 
\stirling{m}{k}\geq 0,\]
and the~above expression is non-zero only for $m\geq k$.
All in all, $\measurePoisson_{k,1,h}$ is a~probability measure on $\N_0$, as required. 

It follows that the~total variation distance between $\operatorname{Binom}(k,1)=\delta_k$ and
$\measurePoisson_{k,1,h}$ is equal to
\[  \sum_{i=0}^\infty \left[ \delta_k(i) - \measurePoisson_{k,1,h}(i) \right]^+ = 
1 - \measurePoisson_{k,1,h}(k) =
1 - \frac{h^k}{\left( e^h -1 \right)^k} \xrightarrow{h\to 0} 0, \]
as required.

\medskip

\emph{The~general case.}
For a~signed measure $\mu$ which is supported on $\N_0$ and $0\leq p\leq 1$ we define
the~signed measure $C_p[\mu]$ on $\N_0$ by
\[ C_p[\mu](k) = \sum_{j\geq k} \mu(j) \binom{j}{k} p^k (1-p)^{j-k}.\]
In the~case when $\mu$ is a~probability measure, $C_p[\mu]$ has a~natural
interpretation as the~probability distribution of a~compound binomial random
variable $\operatorname{Binom}(M,p)$, where $M$ is a~random variable with
the probability distribution given by $\mu$.
    
It is easy to check that for any $0\leq q \leq 1$ the~image of a~binomial distribution
\[ C_p[ \operatorname{Binom}(n,q) ] = \operatorname{Binom}(n, pq) \]    
is again a~binomial distribution, and for any $\lambda\geq 0$ the~image of a
Poisson distribution
\[ C_p [ \operatorname{Pois}(\lambda) ] = \operatorname{Pois}(p\lambda)\]
is again a~Poisson distribution.
Since $C_p$ is a~linear map, by the~very definition \eqref{eq:defmukph} it
follows that
\begin{equation} 
\label{eq:cp}
C_p [ \measurePoisson_{k,1,h} ] = \measurePoisson_{k,p,h}; 
\end{equation}
in particular the~latter is a~probability measure, as required. By considering
the~limit $h\to 0$ of \eqref{eq:cp} we get
\[ \lim_{h\to 0} \measurePoisson_{k,p,h} 
= C_p \left[ \operatorname{Binom}(k,1) \right] 
= \operatorname{Binom}(k,p)
\]
in the~sense of~total variation distance, 
as required.
\end{proof}

\subsection{The inclusion $\Augmented\subset  \N_0 \times \diagrams$ }
\label{sec:inclusion-x}

We will extend the meaning of the
notations from \cref{sec:augmented-Young-diagrams} to a larger set. 
The~map    
\begin{equation} 
    \label{eq:augmented-x}
    \Augmented\ni (\lambda,\Box) \mapsto (x_\Box, \lambda~) \in \N_0 \times \diagrams,  
\end{equation}
where $\Box=(x_\Box,y_\Box)$, allows us to identify $\Augmented$ with a~subset of
$\N_0 \times \diagrams$. 
For a pair $(x,\lambda)\in\N_0\times\diagrams$ 
we will say that $\lambda$ is its \emph{regular part}.

We define the~edges in this larger set \mbox{$\N_0\times \diagrams\supset \Augmented$} as follows:
we declare that $(x,\lambda)\nearrow (\widetilde{x},\widetilde{\lambda})$
if the~following two conditions hold true:
\begin{equation}
    \label{eq:augmented-edge-B} 
    \lambda\nearrow\widetilde{\lambda} \quad \text{and} \quad    
    \widetilde{x} = \begin{cases} 
        \max\left\{  \widetilde{\lambda}_i : 
        \widetilde{\lambda}_i \leq  x  \right\}   & \text{if the~unique box of $\widetilde{\lambda}/\lambda$} \\[-1ex] 
        & \quad \text{is located in the column $x$},\\[1ex]
        x   & \text{otherwise.}  
    \end{cases}
\end{equation}
In this way the oriented graph $\Augmented$ is a subgraph of $\N_0\times\diagrams$.

An~analogous lifting
property as in \cref{lem:lifting} remains valid if we assume that the~initial element $\Lambda^{(\beg)}\in\N_0\times \diagrams$
and the~elements of the~lifted path
\[ \Lambda^{(\beg)}\nearrow \Lambda^{(\beg+1)} \nearrow \cdots \in \N_0\times \diagrams \]
are allowed to be taken from this larger oriented graph.

With these definitions the transition probabilities
\eqref{eq:transition-probability-augmented} also make sense if
$\Lambda,\widetilde{\Lambda}\in\N_0\times \diagrams$ are taken from this larger
oriented graph and can be used to define Markov chains valued in $\N_0\times
\diagrams$.

\subsection{Transition probabilities for augmented Plancherel growth processes}
\label{sec:augmented-plancherel-growth-proccess}

For the~purposes of the~current section we will view $\Augmented$
as a subset of $\N_0\times \diagrams$, cf.~\eqref{eq:augmented-x}. In this way the augmented Plancherel growth process
initiated at time $m$, cf.~\eqref{eq:Markov}, can be viewed as the aforementioned Markov chain 
\begin{equation} 
    \label{eq:augmented-process-x}
    \Big( \big( x_m^{(t)}, \lambda^{(t)} \big) \Big)_{t \geq m}
\end{equation}
valued in $\N_0\times\diagrams$.

Let us fix some integer $n\in\N_0$. For each integer $m\in\{0,\dots,n\}$
we may remove some initial entries of the~sequence
\eqref{eq:augmented-process-x} 
and consider the~Markov chain
\begin{equation} 
\label{eq:Markov-truncated}
\Big( \big( x_m^{(t)}, \lambda^{(t)} \big) \Big)_{t \geq n} 
\end{equation}
which is indexed by the~time parameter $t\geq n$. 
In this way we obtain a~whole family of 
Markov chains \eqref{eq:Markov-truncated} indexed by an~integer
$m\in\{0,\dots,n\}$ which have the~same transition
probabilities \eqref{eq:transition-probability-augmented}.

The latter encourages us to consider a~general class of Markov chains
\begin{equation} 
\label{eq:Markov-truncated2}
\Big( \big( x^{(t)}, \lambda^{(t)} \big) \Big)_{t \geq n} 
\end{equation}
valued in $\N_0\times \diagrams\supset \Augmented$, for which the~transition
probabilities are given by \eqref{eq:transition-probability-augmented} and for
which the~initial probability distribution of $ \big( x^{(n)},
\lambda^{(n)}\big)$ can~be arbitrary. We will refer to each such a~Markov
chain as \emph{augmented Plancherel growth process}.

\begin{proposition}
    \label{prop:transition-augmented}

Let an~integer $k\in\N_0$ and a~real number $0<p<1$ be fixed,
and let $n'=n'(n)$ be a~sequence of integers such that $n'\geq n$ and 
\[ \lim_{n\to\infty} \sqrt{\frac{n}{n'}}= p.\]
For a~given integer $n\geq 0$ let \eqref{eq:Markov-truncated2} be an~augmented
Plancherel growth process with the~initial probability distribution at time $n$
given by
\[ \delta_k \times \Plancherel_n .\]

Then the~total variation
distance
\begin{equation}
\label{eq:tvd}
\delta\left\{ \left(x^{(n')},\lambda^{(n')} \right), 
    \; \operatorname{Binom}\left(k, p \right)  \times \Plancherel_{n'}\right\} 
    \end{equation}
converges to $0$, as $n\to\infty$.
\end{proposition}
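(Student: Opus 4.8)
The plan is to express the initial measure $\delta_k$ as a limit of the explicit linear combinations of Poisson distributions provided by \cref{lem:poisson-binom}, propagate each Poisson summand forward in time using the augmented Plancherel growth processes initiated at various times $m$, and then recognize the resulting linear combination at time $n'$ as (an approximation of) $\operatorname{Binom}(k,p)\times\Plancherel_{n'}$. Concretely: fix $h>0$ and apply \cref{lem:poisson-binom} with the given $k$ and with the parameter $p$ replaced by $1$, so that $\measurePoisson_{k,1,h}$ is a probability measure on $\N_0$ converging in total variation to $\delta_k$ as $h\to 0$. The crucial point is that \cref{prop:distribution-fixed-time} tells us that, up to total variation error $o(1)$, an augmented Plancherel growth process initiated at time $m$ has, at time $t$, the law $\operatorname{Pois}(z)\times\Plancherel_t$ whenever $m/\sqrt{t}\to z$. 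So a single Poisson atom $\operatorname{Pois}(z)$ in the $x$-coordinate at time $n$ is exactly what one gets by running, from time $m\approx z\sqrt{n}$, the augmented process; and the \emph{same} Markov dynamics carried on to time $n'$ produces, by \cref{prop:distribution-fixed-time} again (with $m\approx z\sqrt{n}$, $t=n'$, hence $m/\sqrt{n'}\to z\cdot\sqrt{n/n'}\to zp$), the law $\operatorname{Pois}(zp)\times\Plancherel_{n'}$.

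The second step is to assemble these pieces. For each $j\in\{0,\dots,k\}$, the summand $e^{jh}\operatorname{Pois}(jh)$ in $(e^h-1)^k\,\measurePoisson_{k,1,h}$ corresponds (after normalization) to taking $z_j := jh$ and running the augmented Plancherel growth process initiated at time $m_j(n):=\lfloor jh\sqrt{n}\rfloor$; at time $n$ this matches the product $\operatorname{Pois}(jh)\times\Plancherel_n$ up to $o(1)$ in total variation, and at time $n'$ it matches $\operatorname{Pois}(jhp)\times\Plancherel_{n'}$ up to $o(1)$. Because the transition probabilities \eqref{eq:transition-probability-augmented} are the same for all these processes, linearity of the time-$(n\to n')$ transition operator (viewed as acting on signed measures on $\N_0\times\diagrams$) lets us conclude that the signed measure obtained by pushing $\measurePoisson_{k,1,h}\times\Plancherel_n$ forward from time $n$ to time $n'$ is within $o(1)$ (in total variation) of
\[
\frac{1}{(e^h-1)^k}\sum_{0\le j\le k}(-1)^{k-j}\binom{k}{j}e^{jh}\,\operatorname{Pois}(jhp)\times\Plancherel_{n'}
= \measurePoisson_{k,p,h}\times\Plancherel_{n'},
\]
where we used the definition \eqref{eq:defmukph} of $\measurePoisson_{k,p,h}$ and the fact that $C_p$ applied atomwise gives exactly the rescaling $\operatorname{Pois}(jh)\mapsto\operatorname{Pois}(jhp)$, cf.~\eqref{eq:cp}. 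On the other hand, the starting measure $\delta_k\times\Plancherel_n$ differs from $\measurePoisson_{k,1,h}\times\Plancherel_n$ by at most $\delta(\delta_k,\measurePoisson_{k,1,h})$ in total variation, and the time-$(n\to n')$ transition operator is a contraction for total variation (it is a Markov operator), so the image of $\delta_k\times\Plancherel_n$ at time $n'$ — which is precisely the law of $(x^{(n')},\lambda^{(n')})$ in the statement — is within $\delta(\delta_k,\measurePoisson_{k,1,h}) + o(1)$ of $\measurePoisson_{k,p,h}\times\Plancherel_{n'}$, which in turn is within $\delta(\measurePoisson_{k,p,h},\operatorname{Binom}(k,p))$ of $\operatorname{Binom}(k,p)\times\Plancherel_{n'}$. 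Finally, sending $n\to\infty$ first (killing the $o(1)$) and then $h\to 0$ (killing the two total-variation discrepancies, by \cref{lem:poisson-binom}) shows that \eqref{eq:tvd} converges to $0$.

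The main obstacle — and the place where the argument must be made fully rigorous — is controlling the error terms \emph{uniformly in the index $j$} and, more delicately, making sure the "$o(1)$ per summand" does not blow up when multiplied by the (fixed but possibly large) coefficients $\binom{k}{j}e^{jh}/(e^h-1)^k$. Since $k$ is fixed and $h$ is fixed while we take $n\to\infty$, there are only finitely many summands and the coefficients are bounded by a constant depending on $k,h$ only, so each $o(1)$ error (from \cref{prop:distribution-fixed-time} applied at times $n$ and $n'$) is genuinely $o(1)$ as $n\to\infty$ with $k,h$ held fixed; the order of limits ($n\to\infty$ then $h\to 0$) is therefore essential and must be stated carefully. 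A secondary technical point is that \cref{prop:distribution-fixed-time} is phrased for a process \emph{initiated} at time $m$, whereas here we need the law of such a process \emph{continued} to time $n'$; this is handled by observing that running the augmented Plancherel growth process initiated at time $m_j(n)$ up to time $n'$ is literally an instance of \cref{prop:distribution-fixed-time} with $t=n'$ and $m=m_j(n)$, provided $m_j(n)/\sqrt{n'}\to jhp$, which holds because $m_j(n)\sim jh\sqrt{n}$ and $\sqrt{n/n'}\to p$. Matching the initial condition at time $n$ (namely that $\delta_k\times\Plancherel_n$ decomposes as the signed combination of the laws of these processes at time $n$) is exactly what \cref{prop:distribution-fixed-time} at $t=n$ delivers, again with $o(1)$ total-variation error, so the decomposition is only approximate — but, as just discussed, approximate is enough once the limits are taken in the right order.
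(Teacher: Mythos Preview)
Your proposal is correct and follows essentially the same approach as the paper: fix $h>0$, decompose $\delta_k\times\Plancherel_n$ approximately as the signed combination $\sum_j c_j\,\mu_{\lfloor jh\sqrt{n}\rfloor}(n)$ via \cref{prop:distribution-fixed-time}, push this forward by the (linear, contractive) Markov transition $T$ to time $n'$, apply \cref{prop:distribution-fixed-time} again to identify $T$ of the combination with $\measurePoisson_{k,p,h}\times\Plancherel_{n'}$ up to $o(1)$, and then let $h\to 0$ using \cref{lem:poisson-binom}. Your discussion of the order of limits and the boundedness of the coefficients (for fixed $k,h$) is exactly the technical point the paper handles by fixing $\epsilon>0$ first and choosing $h$ accordingly.
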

\begin{proof}
Let $\epsilon>0$ be given. By \cref{lem:poisson-binom} there exists some $h>0$
with the~property that for each $q \in \{1, p\}$ the~total variation distance
between the~measure $\measurePoisson_{k,q,h}$ defined in~\eqref{eq:defmukph} 
and the~binomial
distribution $\operatorname{Binom}(k,q)$ is bounded from above by $\epsilon$.

\medskip

Let $T$ be a~map defined on the set of 
probability measures on $\N_0 \times \diagrams_n$
in the~following way. 
For a~probability measure $\mu$ on $\N_0\times \diagrams_n$ consider
the~augmented Plancherel growth process \eqref{eq:Markov-truncated2} with the
initial probability distribution at time $n$ given by $\mu$ and
define $T \mu$ to be the~probability measure on $\N_0\times \diagrams_{n'}$ which gives the
probability distribution of $\big(x^{(n')},\lambda^{(n')} \big)$ at time~$n'$.

It is easy to extend the~map $T$ so that it becomes a~linear map between
the~vector space of \emph{signed} measures on $\N_0\times \diagrams_n$ and the
vector space of \emph{signed} measures on $\N_0\times \diagrams_{n'}$. We equip
both vector spaces with a~metric which corresponds to the~total variation
distance. Then $T$ is a~contraction because of 
Markovianity of the~augmented Plancherel growth process.

\medskip

For $m\in\{0,\dots,n\}$ and $t\geq n$ we denote by $\mu_m(t)$ the~probability
measure on $\N_0\times \diagrams$, defined by the~probability distribution at time $t$
of the~augmented Plancherel growth process $\big(x_m^{(t)},\lambda^{(t)}
\big)$ initiated at time $m$. For the~aforementioned value of $h>0$ we consider the
signed measure on $\N_0\times \diagrams_t$ given by 
the~linear combination
\[ \PP(t):=\frac{1}{\left( e^h -1 \right)^k} \sum_{0\leq j \leq k} 
(-1)^{k-j} \binom{k}{j} e^{j h} \mu_{\left\lfloor j h \sqrt{n} \right\rfloor}(t) \]
(which is well-defined for sufficiently big values of $n$ which assure that $k
h \sqrt{n}< n\leq t$). 

We apply \cref{prop:distribution-fixed-time}; 
it follows that for any $j \in \{0, \dots, k\}$ 
the~total variation distance between 
$\mu_{\left\lfloor j h \sqrt{n} \right\rfloor}(n)$ 
and the~product measure
\[
\operatorname{Pois}(jh) \times \Plancherel_n 
\]
converges to $0$, as $n\to\infty$; it follows that
the~total
variation distance between $\PP(n)$ and the~product measure
\begin{equation} 
\label{eq:almost-delta}
\measurePoisson_{k,1,h} \times \Plancherel_{n}
\end{equation}
converges to $0$, as $n\to\infty$.
On the~other hand, the~value of $h>0$ was
selected in such a~way that the~total variation distance between the
probability measure \eqref{eq:almost-delta} and the~product measure
\begin{equation}
\label{eq:deltatimesplan}
 \delta_k \times \Plancherel_n 
\end{equation}
is smaller than~$\epsilon$. In this way we proved that 
\[ \limsup_{n\to\infty} \, \delta\Big\{\, \PP(n), \  \delta_k \times \Plancherel_n  \Big\} \leq \epsilon.\]
An~analogous reasoning shows that
\[ \limsup_{n\to\infty} \, \delta\Big\{\, \PP(n'), \  \operatorname{Binom}(k,p) \times \Plancherel_{n'}  \Big\} \leq \epsilon.\]

The~image of $\PP(n)$ under the~map $T$ can~be calculated by linearity of $T$:
\[ \PP(n') =  T \PP(n).\]

By the triangle inequality and the~observation that the~map $T$ is a~contraction,
\begin{multline*} \eqref{eq:tvd} \leq 
\delta\left\{ \left(x^{(n')},\lambda^{(n')} \right), 
\;  \PP(n') \right\} + \epsilon \leq \\ 
\delta\left\{ \left(x^{(n)},\lambda^{(n)} \right), 
\;  \PP(n) \right\} + \epsilon \leq 2\epsilon 
\end{multline*}
holds true for sufficiently big values of $n$, as required.
\end{proof}

\subsection{Bumping route in the~lazy parametrization converges to the Poisson
    process}

Let $\left( N(t) : t\geq 0 \right)$ denote the Poisson counting process which is independent
from the Plancherel growth process $\lambda^{(0)}\nearrow\lambda^{(1)}\nearrow
\cdots$. The following result is the lazy version of \cref{thm:main-poisson}.

\begin{theorem}
    \label{thm:lazy-poisson}
    
    Let $l\geq 1$ be a~fixed integer, and $z_1>\cdots> z_l$ be a~fixed sequence
of positive real numbers.

Let $\Lambda_m^{(m)}\nearrow \Lambda_m^{(m+1)} \nearrow \cdots$ be the augmented Plancherel growth process initiated at time~$m$. 
We denote $\Lambda_m^{(t)}=\big(\lambda^{(t)},
\Box_m^{(t)}\big)$; let $\Box_m^{(t)}=\big(x_m^{(t)}, y_m^{(t)}\big)$ be the
coordinates of the special box at time $t$.
        
    For each $1\leq i\leq l$ let $t_i=t_i(m)$ be a~sequence of positive
integers such that
    \[ \lim_{m\to\infty} \frac{m}{\sqrt{t_i}} = z_i.\]
    We assume that $t_1\leq \cdots \leq t_l$.
    Then the~total variation distance between
\begin{itemize}
    \item the~probability distribution of the~vector
    \begin{equation} 
    \label{eq:RANDOM-A}
    \left( x_m^{(t_1)}, \dots , x_m^{(t_l)}, \lambda^{(t_l)} \right), 
    \end{equation}
     and 
       
    \item the~probability distribution of the~vector
    \begin{equation} 
    \label{eq:RANDOM-B} 
    \left(  N(z_1), \dots,
        N(z_l), \lambda^{(t_l)} \right) 
    \end{equation}			
\end{itemize}
converges to $0$, as $m\to\infty$.   

\end{theorem}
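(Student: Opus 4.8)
The plan is to establish \cref{thm:lazy-poisson} by induction on the number $l$ of observation times, peeling off the \emph{earliest} time at each step and using the Markov property of the augmented Plancherel growth process together with \cref{prop:distribution-fixed-time} (for the very first observation time) and \cref{prop:transition-augmented} (for the transitions between consecutive observation times). Since $z_1>\cdots>z_l$, for all sufficiently large $m$ we have $t_1<\cdots<t_l$ and, moreover, $\sqrt{t_i/t_j}\to z_j/z_i\in(0,1)$ whenever $i<j$; thus it suffices to treat this case. The key auxiliary statement, which I would prove by induction, is the following version with an arbitrary product initial condition.

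\emph{Claim.} Fix $k\in\N_0$ and reals $1>q_1>q_2>\cdots>q_l>0$. Let $\big(x^{(t)},\lambda^{(t)}\big)_{t\geq n}$ be an augmented Plancherel growth process whose initial distribution at time $n$ equals $\delta_k\times\Plancherel_n$, and let $n\leq s_1<\cdots<s_l$ be sequences of integers (depending on $n$) with $\lim_n\sqrt{n/s_i}=q_i$. Then
\[ \delta\!\left\{\big(x^{(s_1)},\dots,x^{(s_l)},\lambda^{(s_l)}\big),\ \big(M_1,\dots,M_l,\mu\big)\right\}\xrightarrow{n\to\infty}0, \]
where $\mu\sim\Plancherel_{s_l}$ is independent of $(M_1,\dots,M_l)$, $M_1\sim\operatorname{Binom}(k,q_1)$, and, conditionally on $(M_1,\dots,M_i)$, the variable $M_{i+1}$ has law $\operatorname{Binom}(M_i,q_{i+1}/q_i)$.

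The base case $l=1$ of the Claim is exactly \cref{prop:transition-augmented} (with $p:=q_1$). For the induction step, assume the Claim for $l-1$. By \cref{prop:transition-augmented} the law of $\big(x^{(s_1)},\lambda^{(s_1)}\big)$ is within total variation distance $o(1)$ of $\operatorname{Binom}(k,q_1)\times\Plancherel_{s_1}$; in particular, for each $k_1\in\{0,\dots,k\}$ the probability $\PP\{x^{(s_1)}=k_1\}$ stays bounded away from $0$, so the conditional law of $\lambda^{(s_1)}$ given $\{x^{(s_1)}=k_1\}$ is within $o(1)$ of $\Plancherel_{s_1}$. By the Markov property, conditionally on $\{x^{(s_1)}=k_1\}$ the truncated process $\big(x^{(t)},\lambda^{(t)}\big)_{t\geq s_1}$ is again an augmented Plancherel growth process, with initial law at time $s_1$ of the form $\delta_{k_1}\times(\text{something }o(1)\text{-close to }\Plancherel_{s_1})$; since the map sending an initial distribution to the law of the multi-time vector $\big(x^{(s_2)},\dots,x^{(s_l)},\lambda^{(s_l)}\big)$ is a total-variation contraction (Markovianity, as in the proof of \cref{prop:transition-augmented}), the inductive hypothesis — applied with $k:=k_1$, the times $s_2<\cdots<s_l$, and the ratios $\sqrt{s_1/s_i}\to q_i/q_1\in(0,1)$ — shows that, conditionally on $\{x^{(s_1)}=k_1\}$, the vector $\big(x^{(s_2)},\dots,x^{(s_l)},\lambda^{(s_l)}\big)$ is within $o(1)$ of the binomial-thinning chain started from $k_1$ with successive thinning parameters $q_2/q_1,q_3/q_2,\dots$, tensored with $\Plancherel_{s_l}$. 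Averaging over $k_1$ against the (approximately $\operatorname{Binom}(k,q_1)$) distribution of $x^{(s_1)}$, and using the elementary fact that the total variation distance between two laws on a product space is at most the total variation distance of the first marginals plus the average — over the first coordinate, under the second law — of the conditional total variation distances, yields the Claim for $l$; the resulting chain is precisely $M_1\sim\operatorname{Binom}(k,q_1)$, $M_{i+1}\mid M_i\sim\operatorname{Binom}(M_i,q_{i+1}/q_i)$.

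Finally I would deduce \cref{thm:lazy-poisson} by running the same peeling argument one more time, with the only difference that the earliest observation time $t_1$ is handled by \cref{prop:distribution-fixed-time} rather than \cref{prop:transition-augmented}: it gives that $\big(x_m^{(t_1)},\lambda^{(t_1)}\big)$ is within $o(1)$ of $\operatorname{Pois}(z_1)\times\Plancherel_{t_1}$. Conditioning on $\{x_m^{(t_1)}=k_1\}$ (where $k_1$ now ranges over all of $\N_0$), applying the Claim to the remaining times $t_2<\cdots<t_l$ from time $t_1$ with ratios $z_i/z_1$, and averaging over $k_1$ against $\operatorname{Pois}(z_1)$ — the Poisson tail being harmless because each conditional total variation error tends to $0$ and is bounded by $1$, so dominated convergence applies — one finds that $\big(x_m^{(t_1)},\dots,x_m^{(t_l)},\lambda^{(t_l)}\big)$ is asymptotically distributed as the chain $K\sim\operatorname{Pois}(z_1)$, $M_1\mid K\sim\operatorname{Binom}(K,z_2/z_1)$, $M_2\mid M_1\sim\operatorname{Binom}(M_1,z_3/z_2),\dots$, together with an independent $\mu\sim\Plancherel_{t_l}$. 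This is exactly the joint law of $\big(N(z_1),\dots,N(z_l),\lambda^{(t_l)}\big)$: for a Poisson process, $N(z_{i+1})$ given $N(z_i)$ is $\operatorname{Binom}\big(N(z_i),z_{i+1}/z_i\big)$ (each of the $N(z_i)$ points in $[0,z_i]$ is conditionally uniform there, hence lands in $[0,z_{i+1}]$ independently with probability $z_{i+1}/z_i$), and $N$ is independent of the Plancherel growth process. The main obstacle is not a new probabilistic estimate — all analytic content sits in \cref{prop:distribution-fixed-time} and \cref{prop:transition-augmented} — but rather two organizational points: guessing the correct inductive strengthening (the product initial condition $\delta_k\times\Plancherel_n$ and the binomial-thinning chain), and carefully propagating total variation errors through the conditioning, in particular controlling the Poisson tail of $x_m^{(t_1)}$ by dominated convergence and using that the time evolution of the Markov chain is a contraction.
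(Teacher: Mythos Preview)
Your proposal is correct and uses exactly the same ingredients as the paper --- induction on $l$, the Markov property of the augmented Plancherel growth process as a total-variation contraction, \cref{prop:distribution-fixed-time} for a single observation time, \cref{prop:transition-augmented} for transitions, and the binomial-thinning description of the time-reversed Poisson process --- but the induction is organized in the opposite direction. The paper appends the \emph{latest} time at each step: assuming the statement for $\big(x_m^{(t_1)},\dots,x_m^{(t_l)},\lambda^{(t_l)}\big)$, it pushes this law forward through the Markov kernel from $t_l$ to $t_{l+1}$ (a contraction) and then invokes \cref{prop:transition-augmented} once to identify the result. You instead peel off the \emph{earliest} time, which forces you to introduce the auxiliary Claim with a $\delta_k\times\Plancherel_n$ initial condition and to condition on $x_m^{(t_1)}=k_1$ and average. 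Your route makes the role of \cref{prop:transition-augmented} more explicit (it is literally the base case of your Claim), but it costs the extra bookkeeping of the conditional errors and the dominated-convergence argument over the Poisson tail of $x_m^{(t_1)}$. The paper's route avoids the auxiliary Claim entirely, though it too implicitly sums \cref{prop:transition-augmented} against the Poisson law of $N(z_l)$ when passing from $\mathbb{Q}$ to $\mathbb{Q}'$. The two arguments are essentially mirror images of one another.
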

\begin{proof}
We will perform the proof by induction over $l$. 
Its main idea is that the~collection of the~random vectors
\eqref{eq:RANDOM-A} over $l\in\{1,2,\dots\}$ forms a~Markov chain;
the~same holds true for the~analogous collection of 
the~random vectors~\eqref{eq:RANDOM-B}. 
We will compare their initial probability
distributions (thanks to \cref{prop:distribution-fixed-time}) and --- in 
a~very specific sense --- we will compare the~kernels of these Markov chains
(with \cref{prop:transition-augmented}). We present the~details below.

\medskip

    The~induction base $l=1$ coincides with
\cref{prop:distribution-fixed-time}. 
    
    \medskip

    We will prove now the~induction step. We start with the~probability
distribution of the~vector \eqref{eq:RANDOM-A} (with the~substitution
$l:=l+1$). Markovianity of the augmented Plancherel growth process implies
that this probability distribution is given by
    \begin{multline}
    \label{eq:tvda}
    \PP\left\{  \left( x_m^{(t_1)}, \dots , x_m^{(t_{l+1})}, \lambda^{(t_{l+1})} \right) = 
                               (x_1,\dots,x_{l+1},\lambda) \right\} =\\
 \shoveleft{   \sum_{\mu\in\diagrams_{t_l}}
    \PP\left\{  \left( x_m^{(t_1)}, \dots , x_m^{(t_{l})}, \lambda^{(t_{l})} \right) = 
                              (x_1,\dots,x_{l},\mu) \right\} \times }\\
    \times \PPcondCurly{  \left( x_m^{(t_{l+1})}, \lambda^{(t_{l+1})} \right) =
                  \left( x_{l+1}, \lambda\right)  }{ 
                   \left( x_m^{(t_{l})}, \lambda^{(t_{l})} \right) =
                  \left( x_{l}, \mu \right) }                
      \end{multline}
for any $x_1,\dots,x_{l+1}\in\N_0$ and $\lambda\in\diagrams$.
We define the~probability measure $\mathbb{Q}$ 
on $\N_0^{l+1}\times \diagrams$ which to
a~tuple $\left(  x_1,\dots,x_{l+1},\lambda~\right)$ assigns the~probability
   \begin{multline}
\label{eq:tvdb}
\mathbb{Q}\left(  x_1,\dots,x_{l+1},\lambda~\right) :=\\
\shoveleft{
\sum_{\mu\in\diagrams_{t_l}}
\PP\left\{  \left( N(z_1),\dots, N(z_l)  \right) = 
(x_1,\dots,x_{l}) \right\}  \times  
 \Plancherel_{t_l}(\mu) \times} \\
\times \PPcondCurly{   \left( x_m^{(t_{l+1})}, \lambda^{(t_{l+1})} \right) =
\left( x_{l+1}, \lambda\right)  }{ 
\left( x_m^{(t_{l})}, \lambda^{(t_{l})} \right)  =
\left( x_{l}, \mu \right)     }          
. \end{multline}
In the~light of the~general definition~\eqref{eq:Markov-truncated2} of the
augmented Plancherel growth process, the~measures \eqref{eq:tvda} and
\eqref{eq:tvdb} on $\N_0^{l+1}\times \diagrams$ can~be viewed as applications
of the~same Markov kernel (which correspond to the last factors on the right-hand side of \eqref{eq:tvda} and \eqref{eq:tvdb})
\[
\PPcondCurly{   \left( x_m^{(t_{l+1})}, \lambda^{(t_{l+1})} \right) =
\left( x_{l+1}, \lambda\right)  }{ 
\left( x_m^{(t_{l})}, \lambda^{(t_{l})} \right)  =
\left( x_{l}, \mu \right)     }, 
\quad 
\mu\in\diagrams_{t_l},  
\]
to two specific initial probability distributions.
Since such an~application of a~Markov kernel is a~contraction (with respect to
the~total variation distance), we proved in this way that the~total variation
distance between \eqref{eq:tvda} and \eqref{eq:tvdb} is bounded from above by
the~total variation distance between the~initial distributions, that is the~random
vectors \eqref{eq:RANDOM-A} and~\eqref{eq:RANDOM-B}. 
By the~inductive hypothesis 
the total variation distance between the measures $\PP$ and $\mathbb{Q}$ 
converges to zero as $m\to\infty$.
The remaining difficulty is to understand the asymptotic behavior of the measure $\mathbb{Q}$.

\smallskip

Observe that the~sum on the~right hand side of \eqref{eq:tvdb}
\begin{multline}
\sum_{\mu\in\diagrams_{t_l}} \Plancherel_{t_l}(\mu) \times \\
\times \PPcondCurly{   \left( x_m^{(t_{l+1})}, \lambda^{(t_{l+1})} \right) =
\left( x_{l+1}, \lambda\right)  }{ 
\left( x_m^{(t_{l})}, \lambda^{(t_{l})} \right) =
\left( x_{l}, \mu \right) } = \\[1ex]
= \PPcondCurly{  \left(x^{(n')},\lambda^{(n')}\right) = 
      \left(x_{l+1}, \lambda\right)  }{  \left(x^{(n)},\lambda^{(n)}\right)  \overset{d}{=}  \delta_k \times \Plancherel_n  }            
\end{multline}
is the~probability distribution of the~random vector
$\big(x^{(n')},\lambda^{(n')}\big)$ which appears in
\cref{prop:transition-augmented} with $n'=t_{l+1}$, and $n=t_l$, and $p=
\frac{z_{l+1}}{z_{l}}$, and $k=x_l$.
Therefore we proved that the~measure $\mathbb{Q}$ is in 
an~$o(1)$-neighborhood of the~following probability measure
\begin{multline}
\label{eq:prob-Q} 
\mathbb{Q}'\left(  x_1,\dots,x_{l+1},\lambda~\right) :=\\
{
\PP\left\{  \big( N(z_1),\dots, N(z_l)  \big) = 
(x_1,\dots,x_{l}) \right\}   \times} \\ 
\times \Plancherel_{n'}(\lambda) \ 
{\operatorname{Binom}\left( x_l, \frac{z_{l+1}}{z_l} \right)(x_{l+1})}.
\end{multline} 

It is easy to check that 
\[ 
\PPcond{N(z_{l+1})=x_{l+1}}{N(z_{l})=x_{l}}
= \operatorname{Binom}\left( x_l, \frac{z_{l+1}}{z_l} \right)(x_{l+1}).
\]
Hence the~probability of the~binomial distribution 
which appears as the~last factor
on the~right-hand side of \eqref{eq:prob-Q}
can~be interpreted as the~conditional probability distribution of the Poisson
process in the~past, given its value in the~future. 

We show that the Poisson counting process with the~reversed time is also a~Markov process.
Since the Poisson counting process has independent increments, the probability of the event
\[ 
\big( N(z_1),\dots, N(z_l) \big) = (x_1,\dots,x_l) 
\]
can be written as a product; an analogous observation is valid for
$l:=l+1$. Due to cancellations of the factors which contribute to the numerator
and the denominator, the following conditional probability can be simplified:
\begin{multline*}
  \PPcond{N(z_{l+1})=x_{l+1}}{\big( N(z_1),\dots, N(z_l) \big) = (x_1,\dots,x_l)}
 = \\[1ex]
  = \frac{ \PP\left\{ \big( N(z_1),\dots, N(z_{l+1}) \big) = (x_1,\dots,x_{l+1}) \right\}
  }{ 
\PP\left\{ \big( N(z_1),\dots, N(z_{l}) \big) = (x_1,\dots,x_{l}) \right\}
} 
= \\[1ex]
  = \frac{\PP\big( N(z_{l})=x_{l} \: \wedge \: N(z_{l+1})=x_{l+1} \big) }{ \PP\big(  N(z_{l})=x_{l} \big)} = \\[1ex] =
  \PPcond{N(z_{l+1})=x_{l+1}}{N(z_{l})=x_{l}}.
\end{multline*}

By combining the above observations with \eqref{eq:prob-Q} it follows that
\begin{multline*}
\mathbb{Q}'\left(  x_1,\dots,x_{l+1},\lambda \right)=\\
\PP\left\{  \left( N(z_1),\dots, N(z_{l+1})  \right) = 
(x_1,\dots,x_{l+1}) \right\}  \ \Plancherel_{t_{l+1}}(\lambda) 
\end{multline*}
is the~probability distribution of \eqref{eq:RANDOM-B} (with the~obvious
substitution  $l:=l+1$) which completes the~inductive step. 
\end{proof}

\subsection{Lazy version of \cref{rem:poisson}}

The~special case $l=0$ of the~following result seems to be closely related to a
very recent work of Azangulov and Ovechkin \cite{Azangulov2020} who used
different methods.

\begin{proposition}
    \label{prop:lazy-remark}
Let $(\psi_i)$ be a~sequence of independent, identically distributed random
variables with the~exponential distribution $\operatorname{Exp}(1)$.
    
For each $l\in\N_0$ the~joint distribution of the~finite tuple of random
variables
\begin{equation}
\label{eq:vector}
   \left( \frac{m}{\sqrt{T^{[m]}_0}},\dots, \frac{m}{\sqrt{T^{[m]}_l}} \right) 
\end{equation}
converges, as $m\to\infty$, to the~joint distribution of the~sequence of partial sums
\[ \left( \psi_0,\;\;\; \psi_0+\psi_1,\;\;\; \dots, \;\;\; \psi_0+\psi_1+\cdots+\psi_l \right).\]    
\end{proposition}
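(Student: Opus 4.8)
The plan is to deduce \cref{prop:lazy-remark} from \cref{thm:lazy-poisson}. The key point is that, after the change of the time parameter $t\leftrightarrow z=m/\sqrt{t}$, the random variables $T^{[m]}_0,\dots,T^{[m]}_l$ are the successive hitting times of the counting process $t\mapsto x_m^{(t)}$ studied in \cref{thm:lazy-poisson}, and the $(x+1)$-st jump time of the limiting unit-rate Poisson counting process is precisely the partial sum $\psi_0+\dots+\psi_x$.

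\emph{Step 1: a combinatorial identity.} The sequence $\big(x_m^{(t)}\big)_{t\geq m}$ is weakly decreasing (an edge of the augmented Young graph either leaves the column of the special box unchanged or decreases it, cf.~\eqref{eq:augmented-edge-B}), so by the definition of $T_x^{[m]}$ one has
\[ T_x^{[m]} > s \iff x_m^{(s)} \geq x+1 \qquad \text{for every integer } s\geq m. \]
Consequently, for every $w>0$ with $w^{2}\notin\mathbb{Q}$ --- a dense set of values of $w$, which will suffice because the limit law is continuous --- setting $s=s(m,w):=\lceil m^{2}/w^{2}\rceil-1$ (an integer $\geq m$ once $m$ is large, with $m/\sqrt{s(m,w)}\to w$), the non-integrality of $m^{2}/w^{2}$ yields the exact identity
\[ \left\{ \frac{m}{\sqrt{T_x^{[m]}}}<w \right\}= \left\{ x_m^{(s(m,w))}\geq x+1 \right\} \qquad\text{for every } x\in\N_0. \]

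\emph{Step 2: limit and identification.} Fix $l\geq 1$ and reals $0<w_0<\dots<w_l$ with each $w_i^{2}\notin\mathbb{Q}$. Since $s(m,\cdot)$ is decreasing, the integers $t_i:=s(m,w_{l+1-i})$ satisfy $t_1\leq\dots\leq t_{l+1}$ for $m$ large and $m/\sqrt{t_i}\to w_{l+1-i}=:z_i$, so $z_1>\dots>z_{l+1}$. Applying \cref{thm:lazy-poisson} (with ``$l$'' replaced by $l+1$) and discarding the last coordinate $\lambda^{(t_{l+1})}$ we get that $\big( x_m^{(s(m,w_0))},\dots,x_m^{(s(m,w_l))} \big)$ converges in distribution, as $m\to\infty$, to $\big( N(w_0),\dots,N(w_l) \big)$. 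Together with Step 1 this gives
\[ \PP\!\left( \frac{m}{\sqrt{T_0^{[m]}}}<w_0,\ \dots,\ \frac{m}{\sqrt{T_l^{[m]}}}<w_l \right) \xrightarrow[m\to\infty]{} \PP\big( N(w_0)\geq 1,\ \dots,\ N(w_l)\geq l+1 \big). \]
The right-hand side equals $\PP(\xi_0<w_0,\dots,\xi_l<w_l)$, where $\xi_x:=\min\{z:N(z)\geq x+1\}$ is the $(x+1)$-st jump time of $N$; by \cref{rem:poisson} the vector $(\xi_0,\dots,\xi_l)$ has the same law as the tuple of partial sums $\big(\psi_0,\ \psi_0+\psi_1,\ \dots,\ \psi_0+\dots+\psi_l\big)$. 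The random variables $0<\frac{m}{\sqrt{T_0^{[m]}}}\leq\dots\leq\frac{m}{\sqrt{T_l^{[m]}}}$ (the monotonicity follows from $T_0^{[m]}\geq\dots\geq T_l^{[m]}$) are finite almost surely by \cref{lem:lazy:m=1}, and, since the case $l=1$ of the identity in Step 1 already shows that each individual coordinate $\frac{m}{\sqrt{T_x^{[m]}}}$ converges in distribution, the vector \eqref{eq:vector} is tight. A routine sandwiching argument, using the monotonicity of the coordinates together with the continuity of the limit law, then extends the convergence of the joint distribution functions from the dense family of strictly increasing tuples $w_0<\dots<w_l$ to arbitrary tuples, whence \eqref{eq:vector} converges in distribution to $\big(\psi_0,\ \psi_0+\psi_1,\ \dots,\ \psi_0+\dots+\psi_l\big)$, as required.

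The main --- though essentially routine --- obstacle I anticipate lies in Step 1: matching the \emph{integer-valued} hitting times $T_x^{[m]}$ with evaluations of the augmented Plancherel growth process at the rounded time $s(m,w)$, and the attendant need to restrict to a dense set of ``good'' arguments $w$ and to invoke continuity of the limit law; and, relatedly, the measure-theoretic housekeeping at the end of Step 2, where convergence of the joint distribution functions only on strictly increasing tuples --- which is what the ordering hypothesis $z_1>\dots>z_l$ of \cref{thm:lazy-poisson} directly delivers --- must be upgraded to full weak convergence of the entire vector.
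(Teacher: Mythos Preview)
Your proof is correct and follows essentially the same route as the paper: both arguments express the joint cumulative distribution function of \eqref{eq:vector} in terms of the events $\{x_m^{(t_i)}\geq i+1\}$ via the hitting-time identity, apply \cref{thm:lazy-poisson}, and identify the limit with the jump times of the Poisson process. The paper is slightly more direct because with $t_i:=\lfloor (m/s_i)^2\rfloor$ the identity $\{m/\sqrt{T_i^{[m]}}<s_i\}=\{x_m^{(t_i)}>i\}$ holds \emph{exactly} for every $s_i>0$, so your irrationality restriction and the ensuing density/continuity argument are unnecessary; your extra care about the ordering hypothesis of \cref{thm:lazy-poisson}, however, is a point the paper glosses over.
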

\begin{proof}
For any $s_0,\dots,s_l>0$ the~cumulative distribution function of the~random
vector \eqref{eq:vector}
\begin{multline} 
\label{eq:CDF}
\PP\left( \frac{m}{\sqrt{T^{[m]}_0}} < s_0, \quad \dots, \quad
\frac{m}{\sqrt{T^{[m]}_l}} < s_l
  \right)  =\\
  \PP\left( x_m^{(t_0)} > 0, \quad x_m^{(t_1)} > 1, \quad \dots, \quad x_m^{(t_l)} > l \right)
\end{multline}
can~be expressed directly in terms of the~cumulative distribution of
the~random vector
$\left(  x_m^{(t_0)}, \dots,  x_m^{(t_l)} \right) $
with
\[ t_i = t_i(m)= \left\lfloor \left(\frac{m}{s_i}\right)^2\right\rfloor. \]
\cref{thm:lazy-poisson} shows that the~right-hand side of \eqref{eq:CDF}
converges to
\begin{multline*} \PP\Big( N(s_0)>0, \quad N(s_1)>1, \quad \ldots, \;\;\;  N(s_l)>l \Big) = \\
\PP\Big( \psi_0 \leq s_0, \quad \psi_0+\psi_1 \leq  s_1, \quad \dots, \quad \psi_0+\cdots+\psi_l\leq s_l \Big),  
\end{multline*}
where 
\[ 
\psi_i= 
\inf\Big\{ t : N(t)\geq i+1 \Big\} -  \inf\Big\{ t : N(t)\geq i \Big\}, 
\ \ \ i \in \N_0,
\]
denote the~time between the~jumps of the Poisson process. Since
$(\psi_0,\psi_1,\dots)$ form a~sequence of independent random variables with
the~exponential distribution, this concludes the~proof.
\end{proof}

\subsection{Conjectural generalization}

We revisit \cref{sec:trajectory} with some changes. This time let
\[ 
\xi=(\ldots,\xi_{-2},\xi_{-1},\xi_{0},\xi_1,\dots)
\] 
be a~\emph{doubly} infinite
sequence of independent, identically distributed random variables with the
uniform distribution $U(0,1)$ on the~unit interval $[0,1]$.
Let us fix $m\in\R_+$. For $s,t\in\R_+$ we define 
\begin{multline*} \big( x_m(s,t), y_m(s,t) \big) = \\
\Pos_\infty \left(  P\left(\xi_{-\left\lfloor m s \right\rfloor},\dots,\xi_{-2},\xi_{-1}, \infty, \xi_{1}, \xi_{2}, \dots, \xi_{\left\lfloor \frac{m^2}{t^2} \right\rfloor} \right) \right).
\end{multline*}

Let $\mathcal{N}$ denote the~Poisson point process with the~uniform unit
intensity on~$\R_+^2$. For $s,t\in\R_+$ we denote by
\[ \mathcal{N}_{s,t}=\mathcal{N}\left( [0,s] \times [0,t] \right) \]
the~number of sampled points in the~specified rectangle.

\begin{conjecture}
The~random function 
\begin{align}
\label{eq:2dPoisson}
\R_+^2 \ni (s,t) &\mapsto x_m(s,t) \\
\intertext{converges in distribution to Poisson point process}
\label{eq:2dPoisson-true}
 \R_+^2 \ni (s,t) &\mapsto \mathcal{N}_{s,t} 
\end{align}
in the~limit as $m\to\infty$.   
\end{conjecture}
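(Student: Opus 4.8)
The plan is to reduce the claim to the convergence of finite-dimensional distributions and then to build the limiting two-dimensional Poisson structure out of the one-parameter tools already developed in the paper, applied separately in each of the two coordinate directions. Fix a grid $0<s_1<\dots<s_p$ and $0<t_1<\dots<t_q$; since convergence in distribution of the random field $(s,t)\mapsto x_m(s,t)$ is understood through its finite-dimensional marginals, it suffices to prove that the joint law of $\big(x_m(s_i,t_j)\big)_{i,j}$ converges, as $m\to\infty$, to the joint law of $\big(\mathcal{N}_{s_i,t_j}\big)_{i,j}$. Putting $s_0=t_0=0$ one checks that $x_m(0,t)=0$ (the symbol $\infty$ is inserted into an initially empty bottom row and its box never leaves the leftmost column) and, letting $t\to 0$, that $x_m(s,0)=0$ by the finiteness of bumping routes (\cref{prop:is-finite}); since $\mathcal{N}$ likewise vanishes on the axes, the statement is equivalent to the joint convergence of the rectangle increments, whose target law is that of the independent cell counts $\mathcal{N}\big((s_{i-1},s_i]\times(t_{j-1},t_j]\big)\sim\operatorname{Pois}\big((s_i-s_{i-1})(t_j-t_{j-1})\big)$.

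Next one isolates what happens along each coordinate direction. For a \emph{fixed} value of $t$, hence a fixed ``right count'' $b=\lfloor m^2/t^2\rfloor$: by \cref{lem:transpose} the vector $\big(x_m(s_1,t),\dots,x_m(s_p,t)\big)$ is read off from the $y$-coordinate of $\infty$ in a single augmented Plancherel growth process --- the one associated with the reversed sequence $\xi_b,\dots,\xi_1,\infty,\xi_{-1},\dots$ --- observed at the times $b+\lfloor ms_1\rfloor\le\dots\le b+\lfloor ms_p\rfloor$, which differ by only $O(m)=O(\sqrt{b})$ steps. The mechanism behind \cref{prop:distribution-fixed-time-A} (that is, \cref{thm:independent-explicit} together with \cref{lem:past}) shows that the bumps of $\infty$ occurring during these $O(\sqrt{b})$ steps are asymptotically independent Bernoulli trials with success probability $1/\sqrt{b}=t/m$, so $\big(x_m(s_i,t)\big)_i$ converges to the partial sums of independent $\operatorname{Pois}\big(t(s_i-s_{i-1})\big)$ random variables --- precisely the law of $\big(\mathcal{N}_{s_i,t}\big)_i$ --- and, crucially, this holds jointly with the final shape of the tableau, which is asymptotically Plancherel-distributed and asymptotically independent of the recorded $x$-coordinates. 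In the orthogonal direction, for a \emph{fixed} $s$, the vector $\big(x_m(s,t_q),\dots,x_m(s,t_1)\big)$ comes from the \emph{forward} augmented Plancherel growth process initiated at time $\lfloor ms\rfloor$, now observed at the widely separated times $\lfloor ms\rfloor+b_q<\dots<\lfloor ms\rfloor+b_1$ with $b_j=\lfloor m^2/t_j^2\rfloor$; since $\sqrt{(\lfloor ms\rfloor+b_j)/(\lfloor ms\rfloor+b_{j-1})}\to t_{j-1}/t_j$, \cref{prop:transition-augmented} applies and shows that, conditionally on $x_m(s,t_j)=k$ and on an approximately Plancherel shape, $x_m(s,t_{j-1})$ is asymptotically $\operatorname{Binom}(k,t_{j-1}/t_j)$ independently of the past --- which is exactly the reversed-time binomial-thinning kernel of the rate-$s$ Poisson process $t\mapsto\mathcal{N}_{s,t}$.

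These two ingredients are combined by an induction over the levels $j=q,q-1,\dots,1$. The inductive hypothesis asserts that the joint law of the $x$-coordinates $\big(x_m(s_i,t_{j'})\big)_{1\le i\le p,\ j\le j'\le q}$, together with enough shape information at level $t_j$ (the shape of the largest relevant insertion tableau plus the sequence of growth-rows recorded within the current level, whose law is the explicit product law of \cref{thm:independent-explicit}), is within $o(1)$ in total variation distance of the product of the law of $\big(\mathcal{N}_{s_i,t_{j'}}\big)_{i,\ j\le j'\le q}$ with this explicit Plancherel-type law; the base case $j=q$ is the fixed-$t$ analysis above combined with \cref{prop:distribution-fixed-time}. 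For the inductive step, passing from level $t_j$ to level $t_{j-1}$ amounts, in the forward picture, to appending the \emph{same} block of $b_{j-1}-b_j=\Theta(m^2)$ fresh letters to each of the $p$ insertion tableaux; using that these tableaux have approximately Plancherel shapes, one applies \cref{prop:transition-augmented} (together with \cref{prop:distribution-fixed-time-A} to regenerate the small-step structure inside the new level) to conclude that each $x_m(s_i,t_{j-1})$ is obtained from $x_m(s_i,t_j)$ by an independent binomial thinning with retention probability $t_{j-1}/t_j$, and that on the target side this adjoins a fresh layer of independent Poisson cell counts $\mathcal{N}\big((s_{i-1},s_i]\times(t_{j-1},t_j]\big)$; this advances the induction and, at $j=1$, gives the claim.

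The genuinely new difficulty --- and the step I expect to be the main obstacle --- is that the $p$ insertion tableaux appearing at a common level $t_j$ are \emph{not} independent: in the reversed picture they are nested into one another, and they share the block $\xi_1,\dots,\xi_{b_j}$ of right-letters, so the transition from level $t_j$ to level $t_{j-1}$ feeds the \emph{same} random letters to $p$ correlated augmented diagrams. What one therefore needs is a \emph{simultaneous} strengthening of \cref{prop:transition-augmented}: that appending one long block of i.i.d.\ letters to a nested chain of (approximately Plancherel) augmented Young diagrams thins their special columns by \emph{independent} binomials. I would try to prove this along the lines of the proof of \cref{prop:transition-augmented} --- approximating the law of the nested chain by a linear combination of genuine augmented Plancherel growth processes initiated at various times of order $O(\sqrt{n})$, for which the multi-time behaviour is controlled by \cref{thm:independent-explicit} --- but upgrading ``approximately Plancherel and approximately independent'' for a whole chain of shapes into a single clean total-variation bound is where the real work lies. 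A secondary, bookkeeping-type point is to check that the rectangle increments of $x_m$ are nonnegative with probability tending to $1$, so that the limiting field is genuinely a point process.
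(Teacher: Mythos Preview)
The statement you are trying to prove is a \emph{conjecture}: the paper does not prove it, and so there is no ``paper's own proof'' to compare against. What the paper does establish is exactly what you identify as the two one-parameter ingredients: the marginals for fixed $s$ (a corollary of the proof of \cref{prop:distribution-fixed-time}) and the marginals for fixed $t$ (\cref{thm:lazy-poisson}). Your summary of these two directions is accurate and matches the paper's own remarks following the conjecture.

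Your proposal is therefore not a proof but a reasonable \emph{strategy} towards one, and you are candid about this. The gap you flag is real and is precisely why the statement remains open: at a common level $t_j$ the $p$ augmented tableaux corresponding to $s_1<\dots<s_p$ are built from nested sequences sharing the same block $\xi_1,\dots,\xi_{b_j}$, and the passage to the next level feeds the \emph{same} fresh letters $\xi_{b_j+1},\dots,\xi_{b_{j-1}}$ to all of them. The tool you would need---a simultaneous version of \cref{prop:transition-augmented} asserting that one long block of i.i.d.\ insertions thins the special columns of a \emph{chain} of nested augmented diagrams by \emph{independent} binomials---is not available in the paper, and the technique behind \cref{prop:transition-augmented} (approximating $\delta_k\times\Plancherel_n$ by a signed combination of laws $\mu_m(n)$ via \cref{lem:poisson-binom}) handles a single special box at a time; it is not clear how to run it for several coupled special boxes simultaneously without losing control of the cross-correlations. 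Your suggestion to approximate the joint law of the nested chain by a signed linear combination of genuine multi-initiation processes is natural, but carrying it through would require a multivariate analogue of \cref{lem:poisson-binom} together with a multi-box refinement of \cref{thm:independent-explicit}, neither of which the paper supplies.

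You should also be aware that the paper itself is agnostic about the truth of the conjecture: the authors report that the contour curves in their simulations ``do not seem to be counting the number of points from some set which belong to a specified rectangle'', so even the qualitative picture your induction would produce (nonnegative rectangle increments with probability tending to $1$) is not clearly supported numerically. In short: your outline correctly reproduces the known marginals and correctly isolates the missing lemma, but that lemma is the whole difficulty, and the paper offers neither a proof nor strong evidence that it holds.
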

Note that the~results of the~current paper show the~convergence of 
the marginals which correspond to (a) fixed value of $s$ and all values of $t>0$
(cf.~\cref{thm:lazy-poisson}), or (b) fixed value of $t$ and all values of
$s>0$ (this is a~corollary from the~proof of
\cref{prop:distribution-fixed-time}).

It is a~bit discouraging that the~contour curves obtained in computer
experiments (see \cref{fig:2dPoisson}) \emph{do not seem} to be counting the
number of points from some set which belong to a~specified rectangle,
see \cref{fig:2dPoissont} for comparison.
 On the
other hand, maybe the~value of $m$ used in our experiments was not big enough
to reveal the~asymptotic behavior of these curves.

\newcommand{\dir}[1]{figures-Poisson/2D-Poisson/#1}

\begin{figure}
    \centering
    \begin{tikzpicture}[scale=2]
    \definecolor{Set1-9-1}{RGB}{228,26,28}
    \definecolor{Set1-9-A}{RGB}{228,26,28}
    \definecolor{Set1-9-2}{RGB}{55,126,184}
    \definecolor{Set1-9-B}{RGB}{55,126,184}
    \definecolor{Set1-9-3}{RGB}{77,175,74}
    \definecolor{Set1-9-C}{RGB}{77,175,74}
    \definecolor{Set1-9-4}{RGB}{152,78,163}
    \definecolor{Set1-9-D}{RGB}{152,78,163}
    \definecolor{Set1-9-5}{RGB}{255,127,0}
    \definecolor{Set1-9-E}{RGB}{255,127,0}
    \definecolor{Set1-9-6}{RGB}{255,255,51}
    \definecolor{Set1-9-F}{RGB}{255,255,51}
    \definecolor{Set1-9-7}{RGB}{166,86,40}
    \definecolor{Set1-9-G}{RGB}{166,86,40}
    \definecolor{Set1-9-8}{RGB}{247,129,191}
    \definecolor{Set1-9-H}{RGB}{247,129,191}
    \definecolor{Set1-9-9}{RGB}{153,153,153}
    \definecolor{Set1-9-I}{RGB}{153,153,153}   
    \clip(-0.5,-0.5) rectangle (3.55,3.55);
    \foreach \x in {1,...,3}
    { \draw[very thick]  (\x,0 ) +(0,2pt) -- +(0,-2pt) node[anchor=north] {$ \x  $}; }
    
    \foreach \y in {1,...,3}
    { \draw[very thick]  (0, \y ) +(2pt,0) -- +(-2pt,0) node[anchor=east] {$ \y  $}; }

    \draw [->,very thick] (0,0) -- (3.2,0) node[anchor=north west] {$s$};
    \draw [->,very thick] (0,0) -- (0,3.2) node[anchor=south east] {$t$};
    
    \clip (0,0.4) rectangle (50,50);

    \input{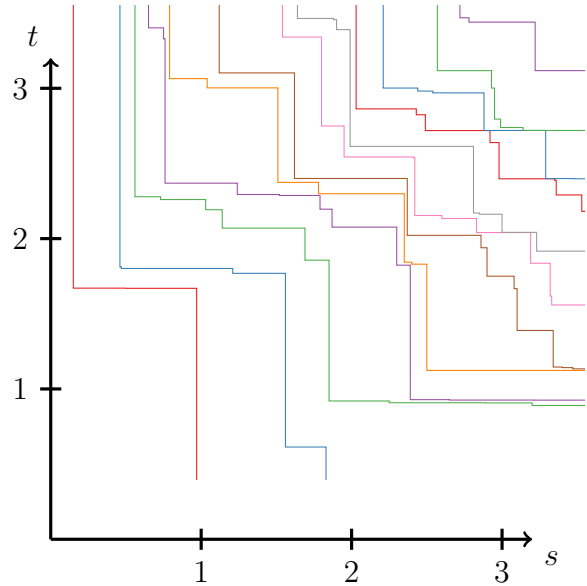}    
    \end{tikzpicture}
    
    \caption{Computer simulation of the level curves of the~function~\eqref{eq:2dPoisson} 
		for $m=100$. A part of the plot which corresponds to
    small values of $t$ was not shown due to restrictions on the computation
    time. }
    
    \label{fig:2dPoisson}
\end{figure}

\begin{figure}
    \centering
    \begin{tikzpicture}[scale=2]
    \definecolor{Set1-9-1}{RGB}{228,26,28}
    \definecolor{Set1-9-A}{RGB}{228,26,28}
    \definecolor{Set1-9-2}{RGB}{55,126,184}
    \definecolor{Set1-9-B}{RGB}{55,126,184}
    \definecolor{Set1-9-3}{RGB}{77,175,74}
    \definecolor{Set1-9-C}{RGB}{77,175,74}
    \definecolor{Set1-9-4}{RGB}{152,78,163}
    \definecolor{Set1-9-D}{RGB}{152,78,163}
    \definecolor{Set1-9-5}{RGB}{255,127,0}
    \definecolor{Set1-9-E}{RGB}{255,127,0}
    \definecolor{Set1-9-6}{RGB}{255,255,51}
    \definecolor{Set1-9-F}{RGB}{255,255,51}
    \definecolor{Set1-9-7}{RGB}{166,86,40}
    \definecolor{Set1-9-G}{RGB}{166,86,40}
    \definecolor{Set1-9-8}{RGB}{247,129,191}
    \definecolor{Set1-9-H}{RGB}{247,129,191}
    \definecolor{Set1-9-9}{RGB}{153,153,153}
    \definecolor{Set1-9-I}{RGB}{153,153,153}   
    \clip(-0.5,-0.5) rectangle (3.55,3.55);
    \foreach \x in {1,...,3}
    { \draw[very thick]  (\x,0 ) +(0,2pt) -- +(0,-2pt) node[anchor=north] {$ \x  $}; }
    
    \foreach \y in {1,...,3}
    { \draw[very thick]  (0, \y ) +(2pt,0) -- +(-2pt,0) node[anchor=east] {$ \y  $}; }

    \draw [->,very thick] (0,0) -- (3.2,0) node[anchor=north west] {$s$};
    \draw [->,very thick] (0,0) -- (0,3.2) node[anchor=south east] {$t$};
    
    \input{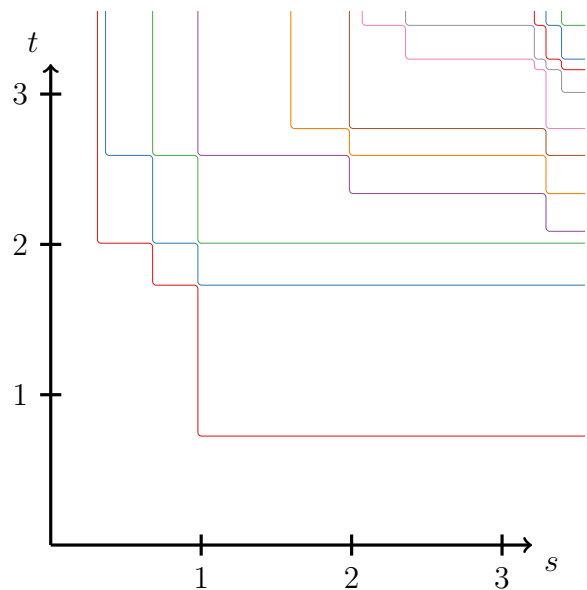}    
    \end{tikzpicture}
    
    \caption{Computer simulation of the level curves 
		of the~function~\eqref{eq:2dPoisson-true} 
		for Poisson point process.}
        \label{fig:2dPoissont}
 
\end{figure}

\section{Removing laziness}
\label{sec:removing-laziness}

Most of the~considerations above concerned the~lazy parametrization of the
bumping routes. In this section we will show how to pass to the~parametrization
by the~row number and, in this way, to prove the~remaining claims from
\cref{sec:preliminaries} (that is \cref{thm:poisson-point,prop:m=1}).

\subsection{Proof of \cref{prop:m=1}}
\label{sec:proof:prop:m=1}

\newcommand{\ysmall}{y}

Our general strategy in this proof is to use \cref{lem:lazy:m=1} and to use the
observation that a Plancherel-distributed random Young diagram with $n$ boxes has
approximately $2\sqrt{n}$ columns in the scaling when $n\to\infty$.

\begin{proof}[Proof of \cref{prop:m=1}]

We denote by $c^{(n)}$ the~number of rows (or, equivalently, the~length of the
leftmost column) of the~Young diagram $\lambda^{(n)}$. Our~proof will be based
on an~observation (recall~\cref{prop:lazy-traj-correspondence}) that
\[ Y_0^{[m]}  = c^{\left( T_0^{[m]} \right)}. \]

Let $\epsilon>0$ be fixed. 
Since $c^{(n)}$ has the~same distribution as the
length of the~bottom row of a~Plancherel-distributed random Young diagram with
$n$ boxes, the~large deviation results \cite{Deuschel1999,Seppaelaeinen1998}
show that there exists a~constant $C_\epsilon>0$ such that
\begin{multline}
\label{eq:LD}
\PP\left( \sup_{n\geq n_0} \left| \frac{c^{(n)}}{\sqrt{n}}-2 \right| > \epsilon\right) \leq 
\sum_{n\geq n_0} \PP\left(  \left| \frac{c^{(n)}}{\sqrt{n}}-2 \right| > \epsilon\right)  \leq \\
\sum_{n\geq n_0} e^{-C_\epsilon \sqrt{n}}  
= O \! \left( e^{-C_\epsilon \sqrt{n_0}} \right) \leq 
o\left( \frac{1}{n_0} \right)
\end{multline}
in the~limit as $n_0 \to\infty$.

\medskip

Consider an arbitrary integer $y\geq 1$. Assume that (i) the~event on the~left-hand side of \eqref{eq:LD} \emph{does not} hold true for
$n_0:=y$, \emph{and} (ii) $Y_0^{[m]}\geq y$. 
Since $T_0^{[m]}\geq Y_0^{[m]}\geq
y$  it follows that
\[
\left| \frac{Y^{[m]}_0}{\sqrt{T_0^{[m]}}}-2 \right|
 = \left| \frac{c^{\left(T_0^{[m]}\right)}}{\sqrt{T_0^{[m]}}}-2 \right| \leq  \epsilon \]
hence
\begin{equation}
\label{eq:CV} 
T_0^{[m]} \geq \left( \frac{y}{2+\epsilon} \right)^2. 
\end{equation}

By considering two possibilities: either the~event on the~left-hand side 
of~\eqref{eq:LD} holds true for $n_0:=y$ or not, it follows that
\[ \PP\left\{  Y_0^{[m]} \geq y\right\} \leq 
o\left( \frac{1}{y} \right) +
\PP\left\{  T_0^{[m]} \geq \left( \frac{y}{2+\epsilon} \right)^2  \right\}. 
\]
\cref{lem:lazy:m=1} implies therefore that
\[ \PP\left\{  Y_0^{[m]} \geq y\right\} \leq  
\frac{(2+\epsilon) m}{y} +
o\left( \frac{1}{y} \right)
\]
which completes the~proof of the~upper bound.

\medskip

For the~lower bound, assume that (i) the~event on the~left-hand side of
\eqref{eq:LD} \emph{does not} hold true for 
\[n_0:= \left\lceil \left( \frac{y}{2-\epsilon}\right)^2 \right\rceil\] 
\emph{and} (ii) $T_0^{[m]}\geq n_0$. 
In an~analogous way as in the~proof of \eqref{eq:CV} it follows that
\[ Y_0^{[m]} \geq (2-\epsilon) \sqrt{T_0^{[m]}} \geq  
 (2-\epsilon)  \sqrt{n_0} \geq y .\]

By considering two possibilities: either the~event on the~left-hand side 
of~\eqref{eq:LD} holds true or not, it follows that that
\[ \PP\left\{  T_0^{[m]} \geq n_0 \right\} \leq  o\left( \frac{1}{y} \right) +
\PP\left\{  Y_0^{[m]} \geq y \right\}.
\]
\cref{lem:lazy:m=1} implies therefore that
\[ \PP\left\{  Y_0^{[m]} \geq y\right\} \geq  
\frac{(2-\epsilon) m}{y} +
o\left( \frac{1}{y} \right)
\]
which completes the~proof of the~lower bound.
\end{proof}

\subsection{Lazy parametrization versus row parametrization}

\begin{proposition}
    \label{prop:lazy-nonlazy}
    For each $x\in\N_0$
\[ \lim_{m \to\infty}  \frac{Y^{[m]}_x }{\sqrt{T^{[m]}_x} } = 2 \]   
holds true in probability.
\end{proposition}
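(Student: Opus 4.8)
The plan is to reduce the statement to a sandwich bounding $Y_x^{[m]}$ between two column lengths of the Young diagram $\lambda^{\left(T_x^{[m]}\right)}$, and then to invoke the large deviation estimates for the boundary of a Plancherel-distributed Young diagram near the axes, exactly as in the proof of \cref{prop:m=1}. I extend the notation of that proof by letting $c_j^{(n)}$ denote the number of boxes in the column with index $j$ of $\lambda^{(n)}$, so that $c_0^{(n)}=c^{(n)}$ is the number of rows and $c_0^{(n)}\geq c_1^{(n)}\geq\cdots$. By \cref{prop:lazy-traj-correspondence} the bumping route, as a set of boxes, equals $\left\{\big(x_m^{(t)},y_m^{(t)}\big):t\geq m\right\}$; here $t\mapsto y_m^{(t)}$ is weakly increasing with $y_m^{(m)}=0$, $t\mapsto x_m^{(t)}$ is weakly decreasing, and both change only at a bump, where $y_m^{(t)}$ grows by exactly one. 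Hence the route occupies a well-defined box in each row, with column decreasing in the row index, and (comparing with \eqref{eq:Y}) the first row in which the route enters the column with index $\leq x$ is the row occupied at time $T_x^{[m]}$, so that $Y_x^{[m]}=y_m^{\left(T_x^{[m]}\right)}$ with $x_m^{\left(T_x^{[m]}\right)}\leq x$. Since the special box $\Box_m^{(t)}$ is an outer corner of $\lambda^{(t)}$, its height equals the length of its column, $y_m^{(t)}=c_{x_m^{(t)}}^{(t)}$; combined with the monotonicity of $j\mapsto c_j^{(n)}$ and with $x_m^{\left(T_x^{[m]}\right)}\leq x$ this gives
\begin{equation}
\label{eq:proposal-sandwich}
c_x^{\left(T_x^{[m]}\right)}\;\leq\; Y_x^{[m]}\;\leq\; c_0^{\left(T_x^{[m]}\right)} .
\end{equation}
(By \cref{prop:is-finite} we have $T_x^{[m]}<\infty$ almost surely, so both sides are well defined.)

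Next I would feed in the Logan--Shepp--Vershik--Kerov limit shape, which touches both coordinate axes at distance $2$: for each fixed $j$ the random variable $c_j^{(n)}$ has the same distribution as the length of the $j$-th row of a $\Plancherel_n$-random Young diagram, so $c_j^{(n)}/\sqrt n\to 2$, and the large deviation estimates of \cite{Deuschel1999,Seppaelaeinen1998} (already used for $c_0^{(n)}$ in \eqref{eq:LD}) provide, for each fixed $\epsilon>0$, a constant $C_\epsilon>0$ with
\[ \PP\!\left(\left|\frac{c_0^{(n)}}{\sqrt n}-2\right|>\epsilon\right)\leq e^{-C_\epsilon\sqrt n}\qquad\text{and}\qquad\PP\!\left(\left|\frac{c_x^{(n)}}{\sqrt n}-2\right|>\epsilon\right)\leq e^{-C_\epsilon\sqrt n}. \]
A union bound over $n\geq m$ (all these events are defined on the same probability space) then shows that, with probability tending to $1$ as $m\to\infty$, the inequality $\big|c_j^{(n)}/\sqrt n-2\big|\leq\epsilon$ holds simultaneously for $j\in\{0,x\}$ and for all $n\geq m$. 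Since $T_x^{[m]}\geq m$, on this event we may set $n:=T_x^{[m]}$ in \eqref{eq:proposal-sandwich}; both bounds there then lie within $\epsilon\sqrt{T_x^{[m]}}$ of $2\sqrt{T_x^{[m]}}$, whence $\big|Y_x^{[m]}/\sqrt{T_x^{[m]}}-2\big|\leq\epsilon$ with probability tending to $1$. As $\epsilon>0$ was arbitrary, $Y_x^{[m]}/\sqrt{T_x^{[m]}}\to 2$ in probability, which is the assertion.

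The one point that is more than a routine union bound is the lower tail estimate $\PP\big(c_x^{(n)}<(2-\epsilon)\sqrt n\big)\leq e^{-C_\epsilon\sqrt n}$ for $x\geq 1$; unlike the case $x=0$ used in \cref{prop:m=1}, it does not follow from the large deviations of the longest increasing subsequence alone, and I expect it to be the main obstacle. I would derive it from the large deviation principle for the rescaled shape: the event $\big\{c_x^{(n)}<(2-\epsilon)\sqrt n\big\}$ forces the rescaled diagram $\tfrac1{\sqrt n}\lambda^{(n)}$ to miss the point $\big(x/\sqrt n,\,2-\epsilon\big)$, which for $n$ large lies in the interior of the limit shape at a distance from the limit-shape curve bounded below uniformly in $n$; such a macroscopic discrepancy between the rescaled diagram and the limit shape is controlled, with a bound summable in $n$, by \cite{Deuschel1999,Seppaelaeinen1998}.
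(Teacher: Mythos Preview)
Your sandwich bound $c_x^{(T_x^{[m]})}\leq Y_x^{[m]}\leq c_0^{(T_x^{[m]})}$ is correct and is the natural first attempt; the paper in fact explicitly considers this route and rejects it, writing just before its proof of \cref{prop:lazy-nonlazy} that ``we are not aware of suitable large deviation results for the lower tail of the distribution of a specific row of a Plancherel-distributed Young diagram, other than the bottom row.'' The references \cite{Deuschel1999,Seppaelaeinen1998} are used in the paper (see \eqref{eq:LD}) only for $c_0^{(n)}$; they do not contain a large deviation principle for the full rescaled Plancherel shape. Your heuristic --- that $\{c_x^{(n)}<(2-\epsilon)\sqrt n\}$ forces a macroscopic area defect above height $2-\epsilon$ --- is morally right, but it does not by itself manufacture the summable bound $e^{-C_\epsilon\sqrt n}$ without an LDP at rate $\sqrt n$ for the shape, which those references do not supply. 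So the ``main obstacle'' you flag is a genuine gap, and your sketch does not close it.

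The paper circumvents the obstacle. Rather than controlling $c_x^{(n)}/\sqrt n$ uniformly over all $n\geq m$, it first invokes \cref{prop:lazy-remark} (already proved from \cref{thm:lazy-poisson}) to obtain tightness of $T_x^{[m]}/m^2$: with probability $>1-2\delta$ the random time $T_x^{[m]}$ lies in $[t_0(m),t_l(m)]$, where $t_i(m)=\lfloor m^2 c(1+\epsilon)^i\rfloor$ for a \emph{finite} grid $i\in\{0,\dots,l\}$. On that finite grid only the law of large numbers $c_x^{(t_i(m))}/\sqrt{t_i(m)}\to 2$ in probability is needed --- no exponential rate --- and monotonicity of $n\mapsto c_x^{(n)}$ interpolates between consecutive grid points. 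The paper thus trades your large-deviation input for the distributional input of \cref{prop:lazy-remark}, which it already has in hand; the cost is a slightly longer argument, the benefit is that nothing beyond pointwise convergence $c_x^{(n)}/\sqrt n\to 2$ in probability is required.
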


Regretfully, the~ideas used in the~proof of \cref{prop:m=1}
(cf.~\cref{sec:proof:prop:m=1} above) are not directly applicable for the~proof
of \cref{prop:lazy-nonlazy} when $x\geq 1$ because we are not aware of suitable
large deviation results for the~lower tail of the~distribution of a~specific row a
Plancherel-distributed Young diagram, other than~the~bottom row.

\medskip

Our general strategy in this proof is to study the length $\mu^{(t)}_x$ of the
column with the fixed index $x$ in the Plancherel growth process $\lambda^{(t)}$,
as $t\to\infty$. Since we are unable to get asymptotic \emph{uniform} bounds for
\begin{equation}
    \label{eq:myratio} 
    \left| \frac{\mu_x^{(t)}}{\sqrt{t}} - 2 \right| 
\end{equation}
over \emph{all} integers $t$ such that $\frac{t}{m^2}$ belongs to some compact
subset of $(0,\infty)$ in the limit $m\to\infty$, as a substitute we consider a
\emph{finite} subset of $(0,\infty)$ of the form
\[ 
\left\{ c (1+\epsilon),\; \dots,\; c(1+\epsilon)^l \right\} 
\]
for arbitrarily small values of $c,\epsilon>0$ and arbitrarily large integer
$l\geq 0$ and prove the appropriate bounds for the integers $t_i(m)$ for which
\scalebox{0.95}{$\displaystyle \frac{t_i}{m^2}$} are approximately elements of this finite set.
We will use monotonicity in order to get some information about \eqref{eq:myratio} also for the integers $t$ which are between the numbers $\{ t_i(m) \}$.

\begin{proof}    
Let $\epsilon>0$ be fixed. Let $\delta>0$ be arbitrary. By
\cref{prop:lazy-remark} the~law of the~random variable
$\frac{m}{\sqrt{T^{[m]}_x} }$ converges to the Erlang distribution which is
supported on $\R_+$ and has no atom in $0$. Let $W$ be a~random variable with
the~latter probability distribution; in this way the law of $\frac{T_x^{[m]}}{m^2}$ converges to the law of $W^{-2}$.
 Let $c>0$ be a~sufficiently small number
such that
\[ \PP\left( W^{-2} < c \right) < \delta. \]
Now, let $l\in\N_0$ be a~sufficiently big integer so that
\[  \PP\left(   c (1+\epsilon)^l < W^{-2} \right) < \delta. \]

We define
\[ t_i = t_i (m) = \left\lfloor  m^2 c (1+\epsilon)^i \right\rfloor \qquad \text{for } i\in\{0,\dots,l\}.\]
With these notations there exists some $m_1$ with the~property that for each $m\geq m_1$
\begin{equation}
\label{eq:goodlimits}
\PP\left( t_0 < T_x^{[m]} \leq t_l \right) > 1-2\delta.
\end{equation}

\medskip

Let $\mu^{(n)}=\big[ \lambda^{(n)}\big]^T$ be the~transpose of
$\lambda^{(n)}$; in this way $\mu^{(n)}_x$ is the~number of the~boxes of
$\tableau$ which are in the~column $x$ and contain an~entry $\leq n$. The
probability distribution of $\mu^{(n)}$ is also given by the Plancherel measure. 
The monograph of Romik \cite[Theorem~1.22]{Romik2015a} contains that proof that 
\[ \frac{\mu^{(t_i(m))}_x}{\sqrt{t_i(m)}} \xrightarrow{\PP} 2\]
holds true in the special case of the bottom row $i=0$;
it is quite straightforward to check that this proof is also valid for 
each $i\in\{0,\dots,l\}$, for the details see \cite[proof of Lemma 2.5]{MMS-Poisson2020v2}.
Hence
there exists some $m_2$ with the~property that for each $m\geq m_2$ 
the~probability of the~event
\begin{equation}
\label{eq:happy-event} 
\left| \frac{\mu^{(t_i(m))}_x}{\sqrt{t_i(m)}} -2 \right| < \epsilon 
                   \quad \text{holds \emph{for each} $i\in\{0,\dots,l\}$} 
\end{equation}
is at least $1-\delta$.

\medskip

Let us consider an~elementary event $\tableau$ with the~property that the~event
considered in \eqref{eq:goodlimits} occurred, that is~$t_0 < T_x^{[m]} \leq t_l$,
\emph{and} the~event \eqref{eq:happy-event} occurred. Since $t_0\leq \cdots
\leq t_l$ form a~weakly increasing sequence, there exists an~index
$j=j(\tableau) \in\{0,\dots,l-1\}$ such that
\[ 
t_{j} < T_x^{[m]} \leq t_{j+1}.
\]
It follows that
\[  \mu_x^{(t_j)}  < Y^{[m]}_x  \leq \mu_x^{(t_{j+1})} \]
hence 
\begin{multline}
\label{eq:goodbound}
(2-\epsilon) \frac{1}{\sqrt{1+\epsilon}+o(1)}
< 
\frac{\mu_x^{(t_{j} )}}{\sqrt{t_{j+1}}}
 \leq   \frac{Y^{[m]}_x }{\sqrt{T^{[m]}_x} }  \leq \\
  \frac{\mu_x^{(t_{j+1})}}{\sqrt{t_j}} < 
  (2+\epsilon) \left( \sqrt{1+\epsilon} + o(1) \right).
\end{multline}

In this way we proved that for each $m\geq \max(m_1,m_2)$ the~probability of
the~event \eqref{eq:goodbound} is at least $1-3\delta$, as required.
\end{proof}

\subsection{Proof of \cref{thm:poisson-point}}
\label{sec:proof:thm:poisson-point}

\begin{proof}
    For each integer $l\geq 0$ \cref{prop:lazy-remark} gives the asymptotics of
the joint probability distribution of the random variables
$T_0^{[m]},\dots,T_l^{[m]}$ which concern the shape of the bumping route in
the lazy parametrization. On the other hand, \cref{prop:lazy-nonlazy}
allows us to express asymptotically these random variables by their non-lazy counterparts
$Y_0^{[m]},\dots,Y_l^{[m]}$.
The discussion from \cref{rem:poisson} completes the proof.
\end{proof}

\section{Acknowledgments}

We thank Iskander Azangulov, Maciej Dołęga, Vadim Gorin, Piet Groeneboom, Adam
Jakubowski, Grigory Ovechkin, Timo Sepp\"{a}l\"{a}inen, and Anatoly Vershik for
discussions and bibliographic suggestions.

\section{Declarations}

\textbf{Funding.} Research supported by Narodowe Centrum Nauki, grant number 2017/26/A/ST1/00189.
Mikołaj Marciniak was additionally supported by 
Narodowe Centrum Badań i Rozwoju, grant number POWR.03.05.00-00-Z302/17-00.

\textbf{Conflicts of interest/Competing interests:} not applicable.

\textbf{Availability of data and material (data transparency):} not applicable.

\textbf{Code availability:} not applicable.

\biblio

\end{document}